%%%%%%%%%%%%%%%%%%%%%%%%%%%%%%%%%%%%%%%%%%%%%%%%%%%%%%%%%%%%%%%%%%%%%%%%%%%%%%
%%
%%  "Leading coefficients and cellular bases", Meinolf Geck, February 2008
%%
\documentclass[leqno,11pt]{amsart}
\marginparwidth 0pt \oddsidemargin 0pt \evensidemargin 0pt \marginparsep 0pt
\topmargin 0pt \textwidth 6.5in \textheight 8.5 in

\usepackage{amssymb}
\usepackage{rotating}
\overfullrule 5pt

\newcommand{\Q}{{\mathbb{Q}}}

\newcommand{\C}{{\mathbb{C}}}
\newcommand{\Z}{{\mathbb{Z}}}
\newcommand{\R}{{\mathbb{R}}}
\newcommand{\T}{{\mathbb{T}}}

\newcommand{\tc}{{\underline{c}}}
\newcommand{\ba}{{\boldsymbol{a}}}
\newcommand{\bc}{{\boldsymbol{c}}}

\newcommand{\bD}{{\mathbf{D}}}

\newcommand{\bC}{{\mathbf{C}}}
\newcommand{\bH}{{\mathbf{H}}}
\newcommand{\bJ}{{\mathbf{J}}}

\newcommand{\fp}{{\mathfrak{p}}}
\newcommand{\fs}{{\mathfrak{s}}}
\newcommand{\ft}{{\mathfrak{t}}}
\newcommand{\fu}{{\mathfrak{u}}}

\newcommand{\fC}{{\mathfrak{C}}}
\newcommand{\fF}{{\mathcal{F}}}
\newcommand{\fL}{{\mathfrak{L}}}

\newcommand{\cA}{{\mathcal{A}}}

\newcommand{\cE}{{\mathcal{E}}}

\newcommand{\cD}{{\mathcal{D}}}
\newcommand{\cLR}{{\mathcal{LR}}}

\newcommand{\cO}{{\mathcal{O}}}

\newcommand\Irr{\operatorname{Irr}}

\newtheorem{thm}{Theorem}[section]
\newtheorem{cor}[thm]{Corollary}
\newtheorem{prop}[thm]{Proposition}
\newtheorem{lem}[thm]{Lemma}
\newtheorem{conj}[thm]{Conjecture}

\theoremstyle{definition}
\newtheorem{defn}[thm]{Definition}
\newtheorem{exmp}[thm]{Example}

\theoremstyle{remark}
\newtheorem{rem}[thm]{Remark}

\renewcommand{\leq}{\leqslant}
\renewcommand{\geq}{\geqslant}
\renewcommand{\atop}[2]{\genfrac{}{}{0pt}{}{#1}{#2}}

\address{M.G.: Department of Mathematical Sciences, King's College, 
Aberdeen AB24 3UE, Scotland, UK}

\email{geck@maths.abdn.ac.uk}

\begin{document}

\date{}

\title{Leading coefficients and cellular bases of Hecke algebras}

\author{Meinolf Geck}

\subjclass[2000]{Primary 20C08; Secondary 20G40}

\begin{abstract} 
Let $\bH$ be the generic Iwahori--Hecke algebra associated with a finite
Coxeter group $W$. Recently, we have shown that $\bH$ admits a natural 
cellular basis in the sense of Graham--Lehrer, provided that $W$ is a Weyl 
group and all parameters of $\bH$ are equal. The construction involves some 
data arising from the Kazhdan--Lusztig basis $\{\bC_w\}$ of $\bH$ and 
Lusztig's asymptotic ring $\bJ$. This article attemps to study $\bJ$ and its 
representation theory from a new point of view. We show that $\bJ$ can be 
obtained in an entirely different fashion from the generic representations 
of $\bH$, without any reference to $\{\bC_w\}$. Then we can extend the 
construction of the cellular basis to the case where $W$ is not 
crystallographic. Furthermore, if $\bH$ is a multi-parameter algebra, we 
will see that there always exists at least one cellular structure on $\bH$. 
Finally, one may also hope that the new construction of $\bJ$ can be 
extended to Hecke algebras associated to complex reflection groups.
\end{abstract}

\maketitle

\pagestyle{myheadings}
\markboth{Geck}{Leading coefficients and cellular bases}

%%%%%%%%%%%%%%%%%%%%%%%%%%%%%%%%%%%%%%%%%%%%%%%%%%%%%%%%%%%%%%%%%%%%%%%%%%%
\section{Introduction} \label{sec0}

Let $\bH$ be a generic $1$-parameter Iwahori--Hecke algebra associated to a 
finite Weyl group $W$, defined over a suitable ring of Laurent polynomials. 
(More precise definitions will be given below.) By definition, $\bH$ has a 
standard basis usually denoted by $\{T_w\mid w\in W\}$. Using properties of 
the ``new'' basis $\{\bC_w\mid w \in W\}$ introduced in \cite{KaLu}, 
Lusztig has defined a ring $\bJ$ which has a $\Z$-basis $\{t_w \mid w 
\in W\}$ and integral structure constants, and which can be viewed as an 
``asymptotic'' version of $\bH$. All the ingredients in the construction 
of $\bJ$ can be defined in an elementary way, but the proof that we indeed 
obtain an associative ring with identity requires a deep geometric 
interpretation of the basis $\{\bC_w\}$; see \cite{Lu2}, \cite{Lusztig03}. 

It turns out that $\bJ_\Q=\Q \otimes_{\Z} \bJ$ is a split semisimple algebra 
isomorphic to the group algebra of $W$. Using properties of the irreducible 
representations of $\bJ_\Q$, we have recently proved in \cite{mycell} that 
$\bH$ has a natural ``cellular'' structure in the sense of Graham and Lehrer 
\cite{GrLe}. The elements of the  corresponding ``cellular'' basis of $\bH$ 
are certain $\Z$-linear combinations of the basis $\{\bC_w\}$ where the 
coefficients involve data arising from the action of the basis elements 
$t_w$ in the irreducible representations of $\bJ_\Q$. Note that, although 
there is an isomorphism between $\bJ_\Q$ and the group algebra of $W$, it 
does not seem to be easily possible to see the data that we need through 
this isomorphism. (For example, the image of $t_w$ in the group algebra of 
$W$ is, in general, a rather complicated sum of group elements.)

Now Lusztig \cite{Lusztig83}, \cite{Lusztig03} has shown that the 
construction of $\bJ$ also makes sense---under the assumption that the 
conjectures {\bf P1}--{\bf P15} in \cite[14.2]{Lusztig03} hold---when we 
consider an Iwahori--Hecke algebra $\bH$ with possibly unequal parameters. 
The results in \cite{mycell} also extend to this case, assuming that
{\bf P1}--{\bf P15} hold.

One of the purposes of this paper is to show that the data required
to define a ``cellular'' basis of $\bH$ can be obtained in an alternative
way, using the generic irreducible representations of $\bH$ and the leading 
matrix coefficients introduced in \cite{my02}. These coefficients even allow
us to construct a ring $\tilde{\bJ}$ with rational structure constants, 
and show that it is associative with identity, without any reference to 
the Kazhan--Lusztig basis $\{\bC_w\}$ at all. We expect that we have 
$\bJ=\tilde{\bJ}$ in general but, at present, we can only prove this 
equality assuming that Lusztig's conjectures {\bf P1}--{\bf P15} hold.

As an application, we extend the construction of a ``cellular'' basis to 
Iwahori--Hecke algebras associated with non-crystallographic finite Coxeter 
groups, as announced in \cite[Remark~3.3]{mycell}.  Using the results in 
\cite{klremarks}, we can also show that an Iwahori--Hecke algebra with 
possibly unequal parameters always admits at least one ``cellular'' 
structure.

Another aspect of our construction of the ring $\tilde{\bJ}$ is that it
may actually be applied to other types of algebras, like the cyclotomic 
Hecke algebras of Brou\'e--Malle \cite{BrMa93} associated to complex 
reflection groups. We hope to discuss this in more detail elsewhere.

This paper is organised as follows. In Section~2, we briefly recall the main
facts about the Kazhdan--Lusztig basis and the $\ba$-invariants of the
irreducible representations of $W$. Here, we work in the general case of
possibly unequal parameters. In Proposition~\ref{klrem1}, we recall a 
result from \cite{klremarks} which shows that the structure constants of 
Lusztig's ring $\bJ$ can be expressed in terms of the leading matrix 
coefficients'' of \cite{my02}. This is the starting point for our 
construction of a new ring $\tilde{\bJ}$; see Section~3. For this purpose, 
we use a definition of the leading matrix coefficients which is somewhat 
more general than that in \cite{my02}; this generalisation is necessary to 
obtain the strongest possible statements in our applications. The new 
definition involves the concept of ``balanced'' representations which will 
be studied in more detail in Section~4. In particular, we establish an 
efficient criterion for checking if a given representation is balanced or 
not; see Proposition~\ref{bal2}. We will show that the analogue of 
\cite[Prop.~2.6]{mycell} (which describes the data required to define a 
cellular basis) holds for all types of $W$ and all choices of the parameters.
In Section~5, we formulate the hypothesis {\bf $\widetilde{\mbox{P15}}$}
which is a variant of Lusztig's {\bf P15} in \cite[14.2]{Lusztig03}. This 
hypothesis alone allows us to construct a cellular basis of $\bH$; the 
statement in Theorem~\ref{mainthm} is actually slightly stronger than the 
main result of \cite{mycell}. In the process of doing this, we give a 
simplified treatment of Lusztig's homomorphism from $\bH$ into $\bJ$; see 
Theorem~\ref{thmJ}.

Let us now introduce some basic notation that will be used throughout this
paper.  Let $(W,S)$ be a Coxeter system and $l\colon W \rightarrow 
\Z_{\geq 0}$ be the usual length function. In this paper, we will only
consider the case where $W$ is a finite group. Let $\Gamma$ be an abelian 
group (written additively). Following Lusztig \cite{Lusztig03}, a function 
$L \colon W \rightarrow \Gamma$ is called a {\em weight function} if $L(ww')
=L(w)+L(w')$ whenever $w,w'\in W$ are such that $l(ww')=l(w)+l(w')$. Note 
that $L$ is uniquely determined by the values $\{L(s)\mid s \in S\}$. 
Furthermore, if $\{c_s \mid s \in S\}$ is a collection of elements in 
$\Gamma$ such that $c_s=c_t$ whenever $s,t \in S$ are conjugate in $W$, then 
there is (unique) weight function $L\colon W \rightarrow \Gamma$ such that 
$L(s)=c_s$ for all $s \in S$. 

Let $R\subseteq \C$ be a subring and $A=R[\Gamma]$ be the free $R$-module 
with basis $\{\varepsilon^g \mid g\in \Gamma\}$. There is a well-defined 
ring structure on $A$ such that $\varepsilon^g\varepsilon^{g'}=
\varepsilon^{g+g'}$ for all $g,g' \in \Gamma$. We write $1=\varepsilon^0 
\in A$. Given $a\in A$ we denote by $a_g$ the coefficient of 
$\varepsilon^g$, so that $a=\sum_{g\in \Gamma} a_g\,\varepsilon^g$.
Let $\bH=\bH_A(W,S,L)$ be the {\em generic Iwahori--Hecke algebra} over $A$
with parameters $\{v_s \mid s\in S\}$ where $v_s:=\varepsilon^{L(s)}$ for
$s\in S$. This an associative algebra which is free as an $A$-module, with
basis $\{T_w\mid w \in W\}$. The multiplication is given by the rule
\[ T_sT_w=\left\{\begin{array}{cl} T_{sw} & \quad \mbox{if $l(sw)>l(w)$},\\
T_{sw}+(v_s-v_s^{-1})T_w & \quad \mbox{if $l(sw)<l(w)$},\end{array}
\right.\]
where $s\in S$ and $w\in W$. The element $T_1$ is the identity element.

\begin{exmp} \label{Mweightint} Assume that $\Gamma=\Z$. Then $A$ is
nothing but the ring of Laurent polynomials over $R$ in an
indeterminate~$\varepsilon$; we will usually denote $v=\varepsilon$. 
Then $\bH$ is an associative algebra over $A=R[v,v^{-1}]$ with
relations:
\[ T_sT_w=\left\{\begin{array}{cl} T_{sw} & \quad \mbox{if $l(sw)>l(w)$},\\
T_{sw}+(v^{c_s}-v^{-c_s})T_w & \quad \mbox{if $l(sw)<l(w)$},\end{array}
\right.\]
where $s\in S$ and $w\in W$.  
This is the setting of Lusztig \cite{Lusztig03}.
\end{exmp}

\begin{exmp} \label{Masym} (a) Assume that $L$ is constant $S$; this case
will be referred to as the {\em equal parameter case}. Note that we are 
automatically in this case when $W$ is of type $A_{n-1}$, $D_n$, $I_2(m)$ 
where $m$ is odd, $H_3$, $H_4$, $E_6$, $E_7$ or $E_8$ (since all generators
in $S$ are conjugate in $W$).

(b) Assume that $W$ is irreducible. Then unequal parameters can only arise 
in types $B_n$, $F_4$, and $I_2(m)$ where $m$ is even.
\end{exmp}

\begin{exmp} \label{Mrem12}
A ``universal'' weight function is given as follows. Let $\Gamma_0$ be the
group of all tuples $(n_s)_{s \in S}$ where $n_s \in \Z$ for all $s \in S$
and $n_s=n_t$ whenever $s,t\in S$ are conjugate in $W$.  (The addition is
defined componentwise). Let $L_0\colon W \rightarrow \Gamma_0$ be the
weight function given by sending $s\in S$ to the tuple $(n_t)_{t \in S}$
where $n_t=1$ if $t$ is conjugate to $s$ and $n_t=0$, otherwise. Let 
$A_0=R[\Gamma_0]$ and $\bH_0=\bH_{A_0}(W,S,L_0)$ be the associated 
Iwahori--Hecke algebra, with parameters $\{v_s\mid s \in S\}$. Then $A_0=
R[\Gamma_0]$ is nothing but the ring of Laurent polynomials in 
indeterminates $v_s$ ($s\in S$) with coefficients in $R$, where $v_s=v_t$ 
whenever $s,t\in S$ are conjugate in $W$. Furthermore, if $S'\subseteq S$ 
is a set of representatives for the classes of $S$ under conjugation, then 
$\{v_s \mid s \in S'\}$ are algebraically independent.
\end{exmp}

%%%%%%%%%%%%%%%%%%%%%%%%%%%%%%%%%%%%%%%%%%%%%%%%%%%%%%%%%%%%%%%%%%%%%%%%%%%
\section{The Kazhdan--Lusztig basis and leading matrix coefficients} 
\label{sec0a}

We now introduce two concepts whose interplay is the main subject of
this paper: the Kazhdan--Lusztig basis and leading matrix coefficients.
Both of these essentially rely on the choice of a total ordering $\leq$ 
on $\Gamma$ which is compatible with the group structure, that is, whenever
$g,g',h \in \Gamma$ are such that $g\leq g'$, then $g+h\leq g'+h$. Such an 
order on $\Gamma$ will be called a {\em monomial order}. 

We will assume that such an ordering exists on $\Gamma$. One readily 
checks that this implies that $A=R[\Gamma]$ is an integral domain; we 
usually reserve the letter $K$ to denote its field of fractions. If we 
are in the equal parameter case (Example~\ref{Masym}), the group $\Gamma=
\Z$ has a natural monomial order. On the other hand, in the setting of 
Example~\ref{Mrem12} (assuming that not all elements of $S$ are conjugate), 
there are infinitely many monomial orders on $\Gamma$.

Throughout this paper, we fix a choice of a monomial order, and we
assume that  
\[ L(s)>0 \qquad \mbox{for all $s \in S$}.\]
We define $\Gamma_{\geq 0}=\{g\in \Gamma\mid g\geq 0\}$ and denote by
$\Z[\Gamma_{\geq 0}]$ the set of all integral linear combinations of
terms $\varepsilon^g$ where $g\geq 0$. The notations $\Z[\Gamma_{>0}]$, 
$\Z[\Gamma_{\leq 0}]$, $\Z[\Gamma_{<0}]$ have a similar meaning.

\subsection{The $\ba$-invariants} \label{sub21}
We set $\Z_W:=\Z[2\cos(2\pi/m_{st}) \mid s,t \in S]$ (where $m_{st}$ 
denotes the order of $st$ in $W$). Note that $\Z_W=\Z$ if $W$ is a finite 
Weyl group (or of crystallographic type), that is, if $m_{st} \in \{2,3,
4,6\}$ for all $s,t \in S$. Recall that $R$ is a subring of $\C$. We shall 
always assume that 

\smallskip
\begin{center}
\fbox{$\qquad \Z_W \subseteq R\quad$ and $\quad F$ is the field of fractions
of $R$.}
\end{center}
\smallskip

Then it is known that $F$ is a splitting field for $W$; see 
\cite[Theorem~6.3.8]{gepf}. The set of irreducible representations of $W$ 
(up to isomorphism) will be denoted by
\[ \Irr(W)=\{E^\lambda \mid \lambda \in \Lambda\}\]
where $\Lambda$ is some finite indexing set and $E^\lambda$ is an
$F$-vectorspace with a given $F[W]$-module structure.  We shall also write
\[d_\lambda=\dim E^\lambda \qquad \mbox{for all $\lambda \in \Lambda$}.\]
Let $K$ be the field of fractions of $A$. By 
extension of scalars, we obtain a $K$-algebra $\bH_K=K\otimes_A \bH$. This 
algebra is known to be split semisimple; see \cite[9.3.5]{gepf}. Furthermore,
by Tits' Deformation Theorem, the irreducible representations of $\bH_K$ 
(up to isomorphism) are in bijection with the irreducible representations 
of $W$; see \cite[8.1.7]{gepf}. Thus, we can write
\[ \Irr(\bH_K)=\{E^\lambda_\varepsilon \mid \lambda \in \Lambda\}.\]
The correspondence $E^\lambda \leftrightarrow E^\lambda_\varepsilon$ is 
uniquely determined by the following condition:
\[ \mbox{trace}\bigl(w,E^\lambda\bigr)=\theta_1\bigl(\mbox{trace}(T_w,
E^\lambda_\varepsilon)\bigr) \qquad \mbox{for all $w \in W$},\]
where $\theta_1 \colon A \rightarrow R$ is the unique ring homomorphism
such that $\theta_1(\varepsilon^g)=1$ for all $g \in \Gamma$. Note also 
that $\mbox{trace}\bigl(T_w,E^\lambda_\varepsilon\bigr) \in A$ for 
all $w\in W$. 

The algebra $\bH$ is {\em symmetric}, with trace from $\tau \colon \bH 
\rightarrow A$ given by $\tau(T_1)=1$ and $\tau(T_w)=0$ for $1 \neq w 
\in  W$. The sets $\{T_w \mid w \in W\}$ and $\{T_{w^{-1}}\mid w \in W\}$ 
form a pair of dual bases. Hence we have the following orthogonality 
relations:
\[ \sum_{w \in W} \mbox{trace}\bigl(T_w,E^\lambda_\varepsilon\bigr)
\,\mbox{trace}\bigl(T_{w^{-1}},E_\varepsilon^\mu\bigr)=\left
\{\begin{array}{cl} d_\lambda\,\bc_\lambda & \quad \mbox{if $\lambda=\mu$},
\\ 0 & \quad \mbox{if $\lambda \neq\mu$};\end{array}\right.\]
see \cite[8.1.7]{gepf}. Here, $0 \neq \bc_\lambda \in A$ and,
following Lusztig, we can write

\smallskip
\begin{center}
\fbox{$\displaystyle \bc_\lambda=f_\lambda\, \varepsilon^{-2\ba_\lambda}+
\mbox{combination of terms $\varepsilon^g$ where $g>-2\ba_\lambda$},$}
\end{center}
\smallskip
where $\ba_\lambda \in \Gamma_{\geq 0}$ and $f_\lambda$ is a strictly 
positive real number; see \cite[3.3]{my02}. 

\begin{rem} \label{ainv} The invariants $\ba_\lambda$ and $f_\lambda$ are 
explicitly known for all types of $W$; see Lusztig \cite[Chap.~22]{Lusztig03}. 
The elements $\bc_\lambda \in A$ and the coefficients $f_\lambda$ are 
independent of the monomial order $\leq$, but $\ba_\lambda$ heavily depends 
on it. Note that the statement concerning the independence of $f_\lambda$ 
is of interest only in the unequal parameter case; see \cite[Prop.~5.1 and 
Table~1]{klremarks} for types $F_4$ and $I_2(m)$, and 
\cite[Prop.~22.14]{Lusztig03} for type $B_n$.
\end{rem}

The invariants $\ba_\lambda$ play a fundamental role in Lusztig's study 
\cite{LuBook} of the characters of reductive groups over finite fields. In 
\cite{mycell}, we use these invariants to define an ordering of $\Lambda$,
which is an essential ingredient in the construction of a ``cellular'' 
basis of $\bH$. 

\subsection{Balanced representations} \label{sub22}
We can now introduce the notion of ``balanced'' representations, which 
is slightly more general than the related concept of ``orthogonal'' 
representations introduced in \cite{my02}. For this purpose, following 
\cite{my02}, we consider a certain valuation ring $\cO$ in $K$. Let us write 
\begin{align*}
F[\Gamma_{\geq 0}]=\mbox{set of $F$-linear combinations of terms 
$\varepsilon^g$ where $g\geq 0$},\\
F[\Gamma_{>0}]=\mbox{set of $F$-linear combinations of terms 
$\varepsilon^g$ where $g>0$}.
\end{align*}
Note that $1+F[\Gamma_{>0}]$ is multiplicatively closed. Furthermore, every 
element $x\in K$ can be written in the form
\[ x=r_x\,\varepsilon^{g_x}\frac{1+p}{1+q}\qquad \mbox{where $r_x 
\in F$, $g_x \in \Gamma$ and $p,q\in F[\Gamma_{>0}]$};\]
note that, if $x\neq 0$, then $r_x$ and $g_x$ indeed are
{\em uniquely determined} by $x$; if $x=0$, we have $r_0=0$ and we set
$g_0:=+\infty$ by convention. We set
\[{\cO}:=\{x \in K \mid g_x \geq 0\} \qquad \mbox{and}\qquad
{\fp}:=\{x \in K \mid g_x >0\}.\]
Then it is easily verified that $\cO$ is a valuation ring in $K$, with
maximal ideal $\fp$. Note that we have
\[ \cO \cap F[\Gamma]=F[\Gamma_{\geq 0}] \qquad \mbox{and}\qquad 
\fp\cap F[\Gamma]=F[\Gamma_{>0}].\]
We have a well-defined $F$-linear ring homomorphism $\cO \rightarrow F$
with kernel $\fp$. The image of $x\in \cO$ in $F$ is called the
{\em constant term} of $x$. Thus, the constant term of $x$ is $0$ if
$x\in \fp$; the constant term equals $r_x$ if $x\in \cO^\times$.

\begin{defn} \label{bal} Choosing a basis of $E_\varepsilon^\lambda$, we 
obtain a matrix representation $\rho^\lambda \colon \bH_K 
\rightarrow M_{d_\lambda}(K)$.  Given $h\in \bH_K$ and $1\leq i,j\leq
d_\lambda$, we denote by $\rho^\lambda_{ij}(h)$ the $(i,j)$-entry of 
the matrix $\rho^\lambda(h)$. We say that $\rho^\lambda$ is {\em balanced}
if \[ \varepsilon^{\ba_\lambda}\rho^\lambda_{ij}(T_w) \in \cO \qquad
\mbox{for all $w \in W$ and all $i,j \in\{1,\ldots,d_\lambda\}$}.\]
If $\rho^\lambda$ is balanced, we define the {\em leading matrix
coefficient} $c_{w,\lambda}^{ij} \in F$ to be the constant term of 
$(-1)^{l(w)} \varepsilon^{\ba_\lambda} \rho^\lambda_{ij}(T_w)$.
\end{defn}

\begin{prop}[Cf.\ \protect{\cite[\S 4]{my02}}] \label{balex} For each 
$\lambda \in \Lambda$, there exists a balanced representation 
$\rho^\lambda$ afforded by $E^\lambda_{\varepsilon}$; moreover, 
$\rho^\lambda$ can be chosen such that
\[\Delta^\lambda \,\rho^\lambda (T_{w^{-1}})=\rho^\lambda
(T_w)^{\operatorname{tr}} \,\Delta^\lambda \quad \mbox{for all $w \in W$},\]
where $\Delta^\lambda \in M_{d_\lambda}(\cO)$ is a diagonal matrix with
diagonal coefficients having positive real numbers as constant terms.
In particular, $\det(\Delta^\lambda) \in \cO^\times$.
\end{prop}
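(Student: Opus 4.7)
The plan is to construct $\rho^\lambda$ in three stages: (i) produce an intertwiner $\Delta \in GL_{d_\lambda}(K)$ implementing the self-duality of $E^\lambda_\varepsilon$ under the anti-automorphism $\flat\colon T_w \mapsto T_{w^{-1}}$; (ii) change basis so that $\Delta$ becomes diagonal with positive real constant terms; and (iii) derive the balance condition from a matrix-coefficient refinement of the orthogonality relations in \S\ref{sub21}, via a positivity argument.

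For (i), $\flat$ is an $A$-algebra anti-automorphism of $\bH$, and for any realisation $\rho^\lambda$ the twist $h \mapsto \rho^\lambda(h^\flat)^{\operatorname{tr}}$ affords a module whose character at $T_w$ is $\mbox{trace}(T_{w^{-1}},E^\lambda_\varepsilon)$. Since this equals $\mbox{trace}(T_w,E^\lambda_\varepsilon)$ for irreducible representations of Iwahori--Hecke algebras of finite Coxeter groups (see \cite[\S 8.2]{gepf}), the twist is isomorphic to $E^\lambda_\varepsilon$, so some such $\Delta$ exists satisfying $\Delta\rho^\lambda(T_{w^{-1}}) = \rho^\lambda(T_w)^{\operatorname{tr}}\Delta$, unique up to a scalar by absolute irreducibility. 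A direct check using this uniqueness gives $\Delta^{\operatorname{tr}} = \pm\Delta$; the $+$ sign is forced by specialising $\varepsilon\mapsto 1$, since every irreducible representation of the finite Coxeter group $W$ carries a symmetric invariant form (Frobenius--Schur indicator $+1$).

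For (ii), averaging over $W$ produces a positive definite $W$-invariant symmetric form on $E^\lambda$, unique up to positive scalar. Rescale $\Delta$ by an element of $K^\times$ so that, after dividing by an appropriate power of $\varepsilon$, it specialises under $\cO\to F$ to this positive form. Pick an orthogonal $F$-basis of $E^\lambda$ diagonalising the residue form with strictly positive entries, and lift it through the $\cO$-lattice in $E^\lambda_\varepsilon$ provided by Tits' deformation theorem; the resulting basis of $E^\lambda_\varepsilon$ puts $\Delta \in M_{d_\lambda}(\cO)$ in diagonal form with strictly positive real constant terms, so that $\det(\Delta)\in\cO^\times$.

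For (iii), expanding a matrix unit $E_{ij}$ of the Wedderburn component $\operatorname{End}_K(E^\lambda_\varepsilon)\subseteq\bH_K$ in the dual bases $\{T_w\},\{T_{w^{-1}}\}$ under $\tau$ yields the matrix-coefficient orthogonality
\[ \sum_{w\in W}\rho^\lambda_{ij}(T_w)\,\rho^\lambda_{kl}(T_{w^{-1}}) = \bc_\lambda\,\delta_{il}\delta_{jk}. \]
Writing $\Delta_i$ for the $(i,i)$-entry of $\Delta$ and substituting $\rho^\lambda_{kl}(T_{w^{-1}}) = (\Delta_l/\Delta_k)\rho^\lambda_{lk}(T_w)$ from the diagonal form of $\Delta$, then specialising to $i=l$, $j=k$, gives $\sum_w\rho^\lambda_{ij}(T_w)^2 = (\Delta_j/\Delta_i)\,\bc_\lambda$. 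Multiplication by $\varepsilon^{2\ba_\lambda}$ places the right-hand side in $\cO$ with strictly positive real constant term $f_\lambda(\Delta_j/\Delta_i)_0$. If $m := \min_w g_{\varepsilon^{\ba_\lambda}\rho^\lambda_{ij}(T_w)}$ were negative, comparing leading coefficients on the two sides would force a vanishing sum of nontrivial real squares---impossible; hence $m\geq 0$, and $\rho^\lambda$ is balanced. The main obstacle is precisely this positivity step: it works only because step~(ii) arranges the leading coefficients of the matrix entries to be real, which in turn rests on the positive definiteness of the specialised invariant form.
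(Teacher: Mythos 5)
Your overall architecture (an invariant symmetric form, an orthogonal basis normalised so the Gram matrix is diagonal with unit entries of $\cO$, then the Schur relations plus a sum-of-squares argument) is the same as the paper's, and your step (iii) is essentially the paper's final computation. The gap is in step (ii), at exactly the point you yourself flag as the crux. You justify the positivity of the constant terms of the diagonal entries of $\Delta$ by appealing to the positive definiteness of the averaged $W$-invariant form on $E^\lambda$. But $E^\lambda$ lives at the specialisation $\theta_1\colon \varepsilon^g\mapsto 1$, whereas the constant term is the residue map of the valuation ring $\cO$, i.e.\ the leading coefficient with respect to the monomial order --- a completely different specialisation. Positivity of $\langle e,e\rangle$ at $\varepsilon=1$ says nothing about the sign (or even the non-vanishing) of the leading coefficient $r_{\langle e,e\rangle}$: for instance $\varepsilon^{2}-3+\varepsilon^{-2}$ is negative at $\varepsilon=1$ but has leading coefficient $+1$. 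What is actually needed is statement ($*$) of the paper's proof (imported from \cite[Prop.~4.3]{my02}): for every $0\neq e$ one has $\varepsilon^{2g}\langle e,e\rangle\in b+\fp$ with $b>0$ real. This ``leading-term positive definiteness'' has its own proof, exploiting that $\langle e,e\rangle=\sum_w(T_w.e,T_w.e)$ is a sum of squares so that the coefficients of the minimal-degree term cannot cancel; it is not a consequence of positivity at $\varepsilon=1$, and without it you cannot rescale the orthogonal basis so that all diagonal entries of $\Delta$ lie in $\cO^\times$ with positive constant terms --- which both the statement of the proposition and your step (iii) require (the latter needs at least that every ratio $\Delta_j/\Delta_i$ lies in $\cO$, hence in $\cO^\times$).

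A secondary problem: the phrase ``lift it through the $\cO$-lattice in $E^\lambda_\varepsilon$ provided by Tits' deformation theorem'' is not meaningful here. Tits' theorem concerns the specialisation $\varepsilon\mapsto 1$, not the residue map $\cO\to F$; and a basis in which all $\rho^\lambda(T_w)$ have entries in $\cO$ cannot exist once $\ba_\lambda>0$, since the Schur relation $\sum_w\rho^\lambda_{ij}(T_w)\,\rho^\lambda_{ji}(T_{w^{-1}})=\bc_\lambda$ would then force $\bc_\lambda\in\cO$, contradicting $\bc_\lambda=f_\lambda\varepsilon^{-2\ba_\lambda}+\cdots$. (Note also that the $\cO$-span of $\{T_w\}$ is not a ring, as $v_s-v_s^{-1}\notin\cO$.) So there is no stable lattice to lift through; the correct normalisation is done vector by vector, multiplying each orthogonal basis vector $e_i$ by a suitable $\varepsilon^{-g_i}$ once ($*$) guarantees that $\langle e_i,e_i\rangle$ has a strictly positive leading coefficient.
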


\begin{proof} We may assume without loss of generality that $F\subseteq\R$. 
Let $(\;,\;)$ be any symmetric bilinear form on $E^\lambda_\varepsilon$
which admits an orthonormal basis. We define  a new bilinear form $\langle
\;,\;\rangle$ 
by the formula
\[ \langle e,e'\rangle:=\sum_{w \in W} (T_w.e,T_w.e') \qquad 
\mbox{for any $e,e'\in E^\lambda_\varepsilon$}.\]
As in the proof of \cite[1.7]{Lusztig81b}, it is easily checked that 
$\langle T_s.e, e'\rangle=\langle e,T_s.e'\rangle$ for all $s \in S$ and, 
hence, $\langle T_w.e,e'\rangle=\langle e, T_{w^{-1}}.e'\rangle$ for all 
$w \in W$.  Arguing as in Step~1 of the proof of \cite[Prop.~4.3]{my02}, we 
see that the following holds:
\begin{equation*}
\mbox{for any $0 \neq e \in E^\lambda_\varepsilon$, we have 
$\varepsilon^{2g} \langle e,e \rangle \in b+\fp$}, \tag{$*$}
\end{equation*}
where $g\in \Gamma$ and $b \in F$ is such that $b>0$. (Recall that
$F \subseteq \R$.) Since we are working over a field of characteristic
$0$, there exists an orthogonal basis, $\{e_1,\ldots,e_{d_\lambda}\}$ say,
with respect to $\langle \;,\; \rangle$. Now ($*$) implies that, by 
multiplying the basis vectors $e_i$ by $\varepsilon^{-g_i}$ for suitable 
$g_i \in \Gamma$, we can assume that 
\[\langle e_i,e_i \rangle \in b_i+\fp \qquad \mbox{where $b_i \in F$, 
$b_i>0$}.\]
Let $\rho^\lambda$ be the matrix representation afforded by 
$E^\lambda_\varepsilon$ with respect to the basis $\{e_1,\ldots,
e_{d_\lambda}\}$ and let $\Delta^\lambda$ be the Gram matrix of
$\langle \;, \;\rangle$ with respect to that basis. Let $D^\lambda$
be the diagonal matrix with $b_1,\ldots,b_{d_\lambda}$ on the diagonal. 
Then we have
\[\Delta^\lambda \equiv D^\lambda \bmod \fp \quad \mbox{and}\quad 
\Delta^\lambda \,\rho^\lambda (T_{w^{-1}})=\rho^\lambda(T_w)^{\text{tr}} \, 
\Delta^\lambda \quad \mbox{for all $w \in W$}.\]
We can now argue as in the proof of \cite[Theorem~4.4]{my02} to show
that ${\rho}^\lambda$ is balanced. Indeed, let $\gamma \in \Gamma$
be minimal such that $\varepsilon^\gamma {\rho}^\lambda_{ij}(T_w) \in
\cO$ for all $w \in W$ and all $1\leq i,j \leq d_\lambda$. Let $\hat{c}_{w,
\lambda}^{ij} \in F$ be the constant term of $\varepsilon^\gamma
{\rho}^\lambda_{ij}(T_w)$. Choose $i,j \in \{1,\ldots,d_\lambda\}$ such
that $\hat{c}_{y,\lambda}^{ij}\neq 0$ for some $y \in W$. Now, we do not 
only have the orthogonality relations already mentioned above, but also 
the Schur relations in \cite[Cor.~7.2.2]{gepf}. Thus, we have 
\[ \varepsilon^{2\gamma} \bc_\lambda \equiv \sum_{w \in W}
\bigl(\varepsilon^\gamma{\rho}_{ij}^\lambda (T_w)\bigr)
\bigl(\varepsilon^\gamma {\rho}_{ji}^\lambda(T_{w^{-1}})\bigr)
\equiv \sum_{w \in W} \hat{c}_{w,\lambda}^{ij}\, \hat{c}_{w^{-1}\lambda}^{ji}
\bmod \fp.\]
Now we multiply the relation $\Delta^\lambda \,\rho^\lambda (T_{w^{-1}})=
\rho^\lambda(T_w)^{\text{tr}} \, \Delta^\lambda$ by 
$\varepsilon^{\ba_\lambda}$ and consider constant terms. Taking into account 
the relation $\Delta^\lambda \equiv D^\lambda \bmod \fp$, we obtain
\[b_j \hat{c}_{w^{-1},\lambda}^{ji}=\hat{c}_{w,\lambda}^{ij}b_i \qquad
\mbox{for all $w \in W$}.\]
This yields
\[ \sum_{w \in W} \hat{c}_{w,\lambda}^{ij}\, \hat{c}_{w^{-1}\lambda}^{ji}
=b_ib_j^{-1} \sum_{w\in W} \bigl(\hat{c}_{w,\lambda}^{ij} \bigr)^2,\]
which is a non-zero real number since $\hat{c}_{y,\lambda}^{ij} \neq 0$
for some $y \in W$. Thus, we conclude that $\varepsilon^{2\gamma}
\bc_\lambda$ lies in $\cO$ and has a non-zero constant term.
Comparing with the relation $\varepsilon^{2\ba_\lambda}\,\bc_\lambda\equiv
f_\lambda \bmod \fp$, we deduce that $\gamma=\ba_\lambda$ as required.
\end{proof}

\begin{rem} \label{bal0} In \cite[Prop.~4.3]{my02}, we assumed that $F=\R$.
This allowed us to go one step further in the above proof and take square 
roots of the numbers $b_i$. Consequently, by rescaling the basis vectors 
$e_i$, we can even assume that $\Delta^\lambda$ is diagonal with diagonal 
coefficients in $1+\fp$. The resulting balanced representations were 
called {\em orthogonal representations} in \cite{my02}. The corresponding 
leading matrix coefficients satisfy the following additional property
(see \cite[Theorem~4.4]{my02}):
\[ c_{w,\lambda}^{ij}=c_{w^{-1},\lambda}^{ji} \quad \mbox{for all
$w \in W$ and $1\leq i,j\leq d_\lambda$}.\]
\end{rem}

\subsection{The Kazhdan--Lusztig basis and Lusztig's $\ba$-function}
\label{sub23}

We now recall the basic facts about the Kazhdan--Lusztig basis of
$\bH$, following Lusztig \cite{Lusztig83}, \cite{Lusztig03}. Again, this
relies on the choice of a monomial $\leq$ on $\Gamma$. Now, there is a 
unique ring involution $A\rightarrow A$, $a \mapsto \bar{a}$, such that 
$\overline{\varepsilon^g}=\varepsilon^{-g}$ for all $g\in\Gamma$. We can 
extend this map to a ring involution $\bH \rightarrow \bH$, $h \mapsto 
\overline{h}$, such that
\[ \overline{\sum_{w \in W} a_w T_w}=\sum_{w \in W} \bar{a}_w
T_{w^{-1}}^{-1} \qquad (a_w \in A).\]
We define $\Gamma_{\geq 0}=\{g\in \Gamma\mid g\geq 0\}$ and denote by
$\Z[\Gamma_{\geq 0}]$ the set of all integral linear combinations of
terms $\varepsilon^g$ where $g\geq 0$. The notations $\Z[\Gamma_{>0}]$,
$\Z[\Gamma_{\leq 0}]$, $\Z[\Gamma_{<0}]$ have a similar meaning.
By Kazhdan--Lusztig \cite{KaLu} and Lusztig \cite{Lusztig83}, 
\cite{Lusztig03}, we have a ``new'' basis $\{C_w'\mid w \in W\}$ of
$\bH$ (depending on $\leq$), where $C_w'$ is characterised by the 
following two conditions:
\begin{itemize}
\item $\overline{C}_w'=C_w'$ and
\item $C_w'=T_w+\sum_{y \in W} p_{y,w} T_y$ where $p_{y,w}\in 
{\Z}[\Gamma_{<0}]$ for all $y \in W$.
\end{itemize}
Here we follow the original notation in \cite{KaLu}, \cite{Lusztig83}; the
element $C_w'$ is denoted by $c_w$ in \cite[Theorem~5.2]{Lusztig03}. As in 
\cite{Lusztig03}, it will be convenient to work with the following 
alternative version of the Kazhdan--Lusztig basis. We set 
$\bC_w=(C_w')^{\dagger}$ for all $w \in W$, where $\dagger\colon \bH 
\rightarrow \bH$ is the $A$-algebra automorphism defined by $T_s^\dagger=
-T_s^{-1}$ ($s \in S$); see \cite[3.5]{Lusztig03}. Note that $\overline{h}
=j(h)^\dagger=j(h^\dagger)$ for all $h \in \bH$ where $j \colon \bH 
\rightarrow\bH$ is the ring involution such that $j(a)=\bar{a}$ for 
$a \in A$ and $j(T_w)=(-1)^{l(w)}T_w$ for $w \in W$. Thus, we have 
\begin{itemize}
\item $\overline{\bC}_w=\bC_w$ and
\item $\bC_w=j(C_w')=(-1)^{l(w)}T_w +\sum_{y \in W} 
(-1)^{l(y)}\overline{p}_{y,w}
T_y$, where $\overline{p}_{y,w}\in {\Z}[\Gamma_{>0}]$.
\end{itemize}
Since the elements $\{\bC_w\mid w\in W\}$ form a basis of $\bH$, we can 
write
\[ \bC_x \bC_y=\sum_{z \in W} h_{x,y,z}\, \bC_z\qquad \mbox{for any
$x,y \in W$},\]
where $h_{x,y,z}=\overline{h}_{x,y,z} \in A$ for all $x,y,z\in W$.
Note that either $h_{x,y,z} \in \Z$ or $h_{x,y,z}$ involves terms from both
$\Gamma_{<0}$ and $\Gamma_{>0}$. For a fixed $z \in W$, we set
\[ \ba(z):= \min \{g\in\Gamma_{\geq 0}\mid \varepsilon^g\,h_{x,y,z} \in
\Z[\Gamma_{\geq 0}] \mbox{ for all $x,y\in W$}\}.\]
This is Lusztig's function $\ba \colon W \rightarrow \Gamma$; see 
\cite[Chap.~13]{Lusztig03}. Given $x,y,z\in W$, we have
$\varepsilon^{\ba(z)}\, h_{x,y,z} \in {\Z}[\Gamma_{\geq 0}]$. By 
\cite[13.9]{Lusztig03}, we have $\ba(z)=\ba(z^{-1})$. Then we define 
$\gamma_{x,y,z} \in \Z$ to be the constant term of $\varepsilon^{\ba(z)}\,
h_{x,y,z^{-1}} \in {\Z}[\Gamma_{\geq 0}]$, that is, we have
\[ \varepsilon^{\ba(z)}\, h_{x,y,z^{-1}} \equiv \gamma_{x,y,z} \bmod {\Z}
[\Gamma_{>0}].\]
These constants appear as the structure constants in Lusztig's ring
$\bJ$; see \cite[Chap.~18]{Lusztig03}.

We can now state the following result which relates the $\ba$-function 
and $\gamma_{x,y,z}$ to leading matrix coefficients. Here we assume that, 
for each $\lambda \in \Lambda$, we have chosen a balanced representation 
$\rho^\lambda$ afforded by $E^\lambda_\varepsilon$ as in Remark~\ref{bal0}. 
(We will see in Proposition~\ref{asymfin} that the same statement holds
for any choice of balanced representations.)

\begin{prop}[See \protect{\cite[Prop.~3.6 and Rem.~4.2]{klremarks}}] 
\label{klrem1} Assume that Lusztig's conjectures {\bf P1}--{\bf P15} in 
\cite[14.2]{Lusztig03} hold. Let $z \in W$. If $\lambda \in \Lambda$ and 
$i,j \in \{1,\ldots,d_\lambda\}$ are such that $c_{z,\lambda}^{ij} \neq 0$, 
then $\ba(z)=\ba_\lambda$. Furthermore, for all $x,y,z\in W$, we have 
\[ \gamma_{x,y,z}=\sum_{\lambda \in \Lambda} \sum_{1\leq i,j,k\leq 
d_\lambda} f_\lambda^{-1} \, c_{x,\lambda}^{ij}\, c_{y,\lambda}^{jk}\, 
c_{z,\lambda}^{ki}.\]
\end{prop}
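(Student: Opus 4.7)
The plan is to derive both assertions from the Kazhdan--Lusztig multiplication formula $\bC_x\bC_y=\sum_z h_{x,y,z}\bC_z$ by applying the balanced representations $\rho^\lambda$ and extracting constant terms in the valuation ring $\cO$; the role of {\bf P1}--{\bf P15} will be to control the $\ba$-supports. The preliminary reduction is to pass from $T_w$ to $\bC_w$ at the asymptotic level: the triangular expansion $\bC_w=(-1)^{l(w)}T_w+\sum_{y\neq w}(-1)^{l(y)}\overline{p}_{y,w}T_y$ with $\overline{p}_{y,w}\in\Z[\Gamma_{>0}]\subseteq\fp$, combined with the balanced condition on $\rho^\lambda$, shows that $\varepsilon^{\ba_\lambda}\rho^\lambda_{ij}(\bC_w)\in\cO$ with constant term $c_{w,\lambda}^{ij}$.

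Applying $\rho^\lambda$ to the multiplication relation, taking the $(i,k)$-entry and multiplying through by $\varepsilon^{2\ba_\lambda}$, I obtain
\[\sum_j\bigl(\varepsilon^{\ba_\lambda}\rho^\lambda_{ij}(\bC_x)\bigr)\bigl(\varepsilon^{\ba_\lambda}\rho^\lambda_{jk}(\bC_y)\bigr)=\sum_z\bigl(\varepsilon^{\ba_\lambda}h_{x,y,z}\bigr)\bigl(\varepsilon^{\ba_\lambda}\rho^\lambda_{ik}(\bC_z)\bigr).\]
The decisive step, and the main obstacle, is to show that only $z$ with $\ba(z)=\ba_\lambda$ contribute a non-zero constant term on the right: for $\ba(z)>\ba_\lambda$ one needs $\varepsilon^{\ba_\lambda}\rho^\lambda_{ik}(\bC_z)$ to lie in $\fp$ deeply enough to compensate the pole of $\varepsilon^{\ba_\lambda}h_{x,y,z}$, while for $\ba(z)<\ba_\lambda$ the quantity $\varepsilon^{\ba_\lambda}h_{x,y,z}$ already lies in $\fp$. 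These vanishings are precisely the content packaged into {\bf P1}--{\bf P15}: {\bf P4} gives constancy of $\ba$ on two-sided cells, and {\bf P8}, {\bf P11}, {\bf P13} attach to each $\lambda$ the unique two-sided cell of $\ba$-value $\ba_\lambda$ which supports its leading matrix coefficients. As a byproduct this delivers the first assertion: $c_{z,\lambda}^{ij}\neq 0$ forces $\ba(z)=\ba_\lambda$. Equating constant terms, and using that $\varepsilon^{\ba(z)}h_{x,y,z}$ has constant term $\gamma_{x,y,z^{-1}}$ (since $\ba(z)=\ba(z^{-1})$), gives
\[\sum_j c_{x,\lambda}^{ij}\,c_{y,\lambda}^{jk}=\sum_{z\in W}\gamma_{x,y,z^{-1}}\,c_{z,\lambda}^{ik}.\]

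To extract $\gamma_{x,y,z_0}$ I would invert via an asymptotic Schur orthogonality. Extracting the leading $\varepsilon^{-2\ba_\lambda}$-coefficient from the Schur relation $\sum_w\rho^\lambda_{ij}(T_w)\rho^\mu_{kl}(T_{w^{-1}})=\delta_{\lambda\mu}\delta_{il}\delta_{jk}\bc_\lambda$ of \cite[Cor.~7.2.2]{gepf}, using $\bc_\lambda=f_\lambda\varepsilon^{-2\ba_\lambda}+\cdots$ and the duality $c_{w^{-1},\lambda}^{ji}=c_{w,\lambda}^{ij}$ of Remark~\ref{bal0}, one obtains the asymptotic orthogonality $\sum_w c_{w,\lambda}^{ij}c_{w,\lambda}^{lk}=\delta_{il}\delta_{jk}f_\lambda$; combined with $\sum_\lambda d_\lambda^2=|W|$, the vectors $(c_{w,\lambda}^{ij})_{w\in W}$ form an orthogonal basis of $\Q^{|W|}$, yielding the dual completeness relation $\sum_{\lambda,i,j}f_\lambda^{-1}c_{u,\lambda}^{ij}c_{u',\lambda}^{ij}=\delta_{u,u'}$. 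The change of dummy variable $z\mapsto u^{-1}$ in the previous identity and the duality rewrite its right-hand side as $\sum_u\gamma_{x,y,u}c_{u,\lambda}^{ki}$; multiplying by $f_\lambda^{-1}c_{z_0,\lambda}^{ki}$ and summing over $\lambda,i,k$ then collapses the right via completeness to $\gamma_{x,y,z_0}$ and produces the displayed formula, with the factor $f_\lambda^{-1}$ emerging directly from this Schur inversion.
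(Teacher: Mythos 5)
Your route is genuinely different from the one taken in the cited source and in this paper's deleted parallel treatment: the paper works with the single identity $h_{x,y,z^{-1}}=\tau(\bC_x\bC_y\bD_z)$, expands $\tau$ via $\tau(h)=\sum_\lambda\bc_\lambda^{-1}\operatorname{trace}\rho^\lambda(h)$ (so $f_\lambda^{-1}$ enters through $\bc_\lambda^{-1}=f_\lambda^{-1}\varepsilon^{2\ba_\lambda}/(1+g_\lambda)$), multiplies by $\varepsilon^{\ba(z)}$, and controls the $\lambda$-sum by showing that $\rho^\lambda(\bD_z)\neq 0$ forces $\ba_\lambda\leq\ba(z)$. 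You instead apply $\rho^\lambda$ to $\bC_x\bC_y=\sum_z h_{x,y,z}\bC_z$, multiply by $\varepsilon^{2\ba_\lambda}$, control the $z$-sum, and recover $\gamma_{x,y,z_0}$ by Schur inversion, with $f_\lambda^{-1}$ entering through the asymptotic Schur relation $\sum_w c_{w,\lambda}^{ij}c_{w^{-1},\mu}^{kl}=\delta_{\lambda\mu}\delta_{il}\delta_{jk}f_\lambda$. These are dual computations; the Schur-inversion machinery you set up is correct.

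The gap is in the decisive step and in your claim that the first assertion drops out as a ``byproduct''. Your $\bC$-based argument can at best yield one half of the first assertion. The case $\ba(z)>\ba_\lambda$ is handled by the fact that $\rho^\lambda(\bC_z)=0$ whenever $\ba(z)>\ba_\lambda$ (this is stronger than ``lie in $\fp$ deeply enough'', and it is the statement one actually needs, since the pole of $\varepsilon^{\ba_\lambda}h_{x,y,z}$ has unbounded order); but to prove it one must argue through cell modules, e.g.\ that $E^\lambda$ occurs in $[\fC]_1$ for a left cell $\fC$ with $\ba$-value $\ba_\lambda$ and that $\bC_z$ annihilates $[\fC]_A$ when $\ba(z)>\ba_\lambda$ -- none of which is a direct reading of {\bf P4}, {\bf P8}, {\bf P11}, {\bf P13}; you invoke these but do not make the connection. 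More seriously, the complementary inequality $\ba(z)\geq\ba_\lambda$ (needed for $c_{z,\lambda}^{ij}\neq 0\Rightarrow\ba(z)=\ba_\lambda$) does \emph{not} follow from the $\bC$-side at all: for $\ba(z)<\ba_\lambda$ the term in your sum vanishes because $\varepsilon^{\ba_\lambda}h_{x,y,z}\in\fp$, which imposes no constraint whatsoever on $c_{z,\lambda}^{ij}$. One must pass to the dual basis $\{\bD_w\}$, observe that $\varepsilon^{\ba_\lambda}\rho^\lambda_{ij}(\bD_z)$ has the same constant term $c_{z,\lambda}^{ij}$ as $\varepsilon^{\ba_\lambda}\rho^\lambda_{ij}(\bC_z)$ (via the common leading term $(-1)^{l(z)}T_z$ in both triangular expansions), and then prove the $\bD$-dual vanishing lemma $\rho^\lambda(\bD_z)\neq 0\Rightarrow\ba(z)\geq\ba_\lambda$; your argument never touches $\bD$. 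This omission is not cosmetic: without the full first assertion, the identity $\sum_j c_{x,\lambda}^{ij}c_{y,\lambda}^{jk}=\sum_{z\in W}\gamma_{x,y,z^{-1}}c_{z,\lambda}^{ik}$ is not justified (you only get the sum restricted to $\ba(z)=\ba_\lambda$), and your final Schur contraction collapses the restricted sum to $\gamma_{x,y,z_0}$ only if $c_{z_0,\lambda}^{ki}$ and $c_{u,\lambda}^{ki}$ already vanish off $\ba_\lambda=\ba(z_0)=\ba(u)$, which is again the first assertion. So the argument is circular as written and needs the $\bD$-dual lemma supplied explicitly.
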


In the next section, we will use the expression on the right hand side
of the above identity to construct a ring $\tilde{\bJ}$, {\em without} 
assuming  that {\bf P1}--{\bf P15} hold. Note also that not all of 
{\bf P1}--{\bf P15} are required for proving Proposition~\ref{klrem1}. 
For example, {\bf P15} is not needed; see \cite[Remark~3.9]{klremarks}.

\begin{rem} \label{note2} The conjectures {\bf P1}--{\bf P15} are known to 
hold, for example, in the equal parameter case. For crystallographic $W$, see
\cite[Chap.~16]{Lusztig03} and the references there. For $W$ of type 
$I_2(m)$, $H_3$ or $H_4$, see DuCloux \cite{Fokko}. Now let $(W,S)$ be of 
type $B_n$, $F_4$ or $I_2(m)$ ($m$ even). Let $L_0 \colon W \rightarrow 
\Gamma_0$ be the universal weight function as in Example~\ref{Mrem12}. Thus, 
$L_0$ depends on two values $a,b \in \Gamma$, which are attached to the 
generators in $S$:
\begin{center}
\makeatletter
\vbox{\begin{picture}(200,60)
\put( 10,40){$B_n$}
\put( 50,40){\@dot{5}}
\put( 48,47){$b$}
\put( 50,40){\line(1,0){20}}
\put( 59,43){$\scriptstyle{4}$}
\put( 70,40){\@dot{5}}
\put( 68,47){$a$}
\put( 70,40){\line(1,0){30}}
\put( 90,40){\@dot{5}}
\put( 88,47){$a$}
\put(110,40){\@dot{1}}
\put(120,40){\@dot{1}}
\put(130,40){\@dot{1}}
\put(140,40){\line(1,0){10}}
\put(150,40){\@dot{5}}
\put(147,47){$a$}

\put( 10, 12){$I_2(m)$}
\put( 10, 02){$\;\scriptstyle{m\, {\rm even}}$}
\put( 50, 07){\@dot{5}}
\put( 48, 14){$b$}
\put( 56, 10){$\scriptstyle{m}$}
\put( 50, 07){\line(1,0){20}}
\put( 70, 07){\@dot{5}}
\put( 68, 14){$a$}

\put(103, 12){$F_4$}
\put(130, 07){\@dot{5}}
\put(128, 14){$a$}
\put(130, 07){\line(1,0){20}}
\put(150, 07){\@dot{5}}
\put(148, 14){$a$}
\put(150, 07){\line(1,0){20}}
\put(158, 10){$\scriptstyle{4}$}
\put(170, 07){\@dot{5}}
\put(168, 14){$b$}
\put(170, 07){\line(1,0){20}}
\put(190, 07){\@dot{5}}
\put(188, 14){$b$}
\end{picture}
\makeatother}
\end{center}
Choose a pure lexicographic order on $\Gamma_0$, such that $b>ra>0$ for 
all $r\in\Z_{\geq 1}$. Then {\bf P1}--{\bf P15} are also known to hold; 
see \cite[Theorem~5.3]{klremarks} and the references there. In analogy
to Bonnaf\'e--Iancu \cite{BI}, this may be called the general
``asymptotic case''.
\end{rem}

%%%%%%%%%%%%%%%%%%%%%%%%%%%%%%%%%%%%%%%%%%%%%%%%%%%%%%%%%%%%%%%%%%%%%%%%%%%
\section{The ring $\tilde{\bJ}$} \label{secJtilde}

In this section, we show that the ``leading matrix coefficients'' 
associated to balanced representations as in Definition~\ref{bal}
can be used to construct a ring $\tilde{\bJ}$.  We keep the basic setting
of \S \ref{sub22}. Throughout this section we assume that, for each 
$\lambda \in \Lambda$, we are given a balanced representation 
$\rho^\lambda$ afforded by $E^\lambda_\varepsilon$, with corresponding
leading matrix coefficients $c_{w,\lambda}^{ij}$. 

\begin{defn} \label{tilde} For $w,x,y,z \in  W$, we set
\begin{align*}
\tilde{\gamma}_{x,y,z}&:=\sum_{\lambda \in \Lambda}\sum_{1\leq i,j,
k\leq d_\lambda} f_\lambda^{-1}\, c_{x,\lambda}^{ij}\, c_{y,\lambda}^{jk}
\, c_{z,\lambda}^{ki},\\
\tilde{n}_w &:=\sum_{\lambda\in \Lambda} \sum_{1\leq i\leq d_\lambda} 
f_\lambda^{-1} \,c_{w^{-1},\lambda}^{ii}.
\end{align*}
Let $\tilde{\bJ}$ be the $F$-vectorspace 
with basis $\{t_w\mid w\in W\}$. We define a bilinear product on 
$\tilde{\bJ}$ by
\[t_xt_y=\sum_{z\in W}\tilde{\gamma}_{x,y,z^{-1}}\,t_z\qquad (x,y\in W).\]
Let $\tilde{\cD}:=\{ w\in W \mid \tilde{n}_w\neq 0\}$. We define an element 
of $\tilde{\bJ}$ by $1_{\tilde{\bJ}}:=\sum_{w \in \tilde{\cD}} 
\tilde{n}_w\, t_w$. 

Note that the above definitions appear to depend on the choice of
$\rho^\lambda$ but at the end of this section, we will see that this
is not the case. 
\end{defn}

\begin{rem} \label{bal1} Since $\bH$ is symmetric, we have the following 
Schur relations (see \cite[Cor.~7.2.2]{gepf}):
\begin{equation*}
\sum_{y \in W} \rho_{ij}^\lambda(T_w)\,\rho^\mu_{kl}(T_{w^{-1}})=
\delta_{il}\delta_{jk} \delta_{\lambda\mu} \bc_\lambda,
\end{equation*}
where $\lambda,\mu \in \Lambda$, $1\leq i,j\leq d_\lambda$ and $1\leq k,l
\leq d_\mu$. Multiplying by $\varepsilon^{\ba_\lambda+\ba_\mu}$ and taking 
constant terms on both sides, we obtain orthogonality relations for the 
leading matrix coefficients:
\begin{equation*}
\sum_{w \in W} c_{w,\lambda}^{ij}\, c_{w^{-1},\mu}^{kl}=
\delta_{il}\delta_{jk} \delta_{\lambda\mu} f_\lambda.\tag{$*$}
\end{equation*}
These relations can be ``inverted'' and so we also have:
\begin{equation*}
\sum_{\lambda\in \Lambda} \sum_{1\leq i,j\leq d_\lambda} f_\lambda^{-1}
c_{x,\lambda}^{ij} \, c_{y^{-1},\lambda}^{ji}=\delta_{xy} \quad \mbox{for
all $x,y \in W$}.\tag{$*^\prime$}
\end{equation*}
\end{rem}

\begin{lem} \label{asym0} We have the following relations:
\begin{alignat*}{2}
\tilde{\gamma}_{x,y,z}&=\tilde{\gamma}_{y,z,x} &&\qquad \mbox{for all
$x,y,z\in W$},\tag{a}\\
%\tilde{\gamma}_{x,y,z}&=\tilde{\gamma}_{y^{-1},x^{-1},z^{-1}}
%&&\qquad \mbox{for all $x,y,z\in W$},\tag{b}\\
\sum_{w\in W} \tilde{\gamma}_{x^{-1},y,w}\, \tilde{n}_w
&=\delta_{xy} &&\qquad \mbox{for all $x,y \in W$}. \tag{b}
\end{alignat*}
\end{lem}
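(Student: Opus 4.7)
The plan is to prove both identities by direct manipulation of the defining sums, exploiting the two orthogonality relations $(*)$ and $(*^\prime)$ for leading matrix coefficients recorded in Remark~\ref{bal1}.

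For part~(a), I would simply observe that $\tilde{\gamma}_{x,y,z}$ is a triple sum in which the three matrix-index variables $i,j,k$ appear in the cyclic pattern $(ij)(jk)(ki)$ attached to $(x,y,z)$ respectively. Relabelling the dummy indices by the cyclic substitution $i\mapsto j\mapsto k\mapsto i$ turns the summand for $\tilde{\gamma}_{y,z,x}$ into that for $\tilde{\gamma}_{x,y,z}$, and this gives (a) with no use of orthogonality.

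For part~(b), I would expand both $\tilde{\gamma}_{x^{-1},y,w}$ and $\tilde{n}_w$ using Definition~\ref{tilde}, obtaining
\[
\sum_{w\in W}\tilde{\gamma}_{x^{-1},y,w}\,\tilde{n}_w
=\sum_{w}\sum_{\lambda,i,j,k}\sum_{\mu,l}
f_\lambda^{-1}f_\mu^{-1}\,c_{x^{-1},\lambda}^{ij}\,c_{y,\lambda}^{jk}\,
c_{w,\lambda}^{ki}\,c_{w^{-1},\mu}^{ll}.
\]
The key step is to interchange the summations and perform the $w$-summation first, using the Schur-type orthogonality
\[
\sum_{w\in W}c_{w,\lambda}^{ki}\,c_{w^{-1},\mu}^{ll}
=\delta_{k l}\,\delta_{i l}\,\delta_{\lambda\mu}\,f_\lambda
\]
from Remark~\ref{bal1}$(*)$. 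This collapses three of the summation variables ($\mu=\lambda$, $k=l=i$) and cancels one factor of $f_\lambda^{-1}$ against $f_\lambda$, reducing the expression to
\[
\sum_{\lambda,i,j}f_\lambda^{-1}\,c_{x^{-1},\lambda}^{ij}\,c_{y,\lambda}^{ji}.
\]
Applying the inverse orthogonality relation $(*^\prime)$ with $x$ replaced by $x^{-1}$ and $y^{-1}$ replaced by $y$, this sum equals $\delta_{x^{-1},y^{-1}}=\delta_{xy}$, as desired.

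The whole argument is bookkeeping; there is no real obstacle, since both identities follow formally from the orthogonality relations, which have already been established. The only point that requires a little care is matching the matrix indices in the orthogonality formula $(*)$ correctly when carrying out the $w$-summation in (b).
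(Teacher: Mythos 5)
Your proof is correct and takes essentially the same route as the paper: part (a) by the cyclic symmetry of the defining sum, and part (b) by carrying out the $w$-summation first via the orthogonality relation $(*)$ of Remark~\ref{bal1} and then concluding with $(*^\prime)$.
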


\begin{proof} (a) Just note that the defining formula for $\tilde{\gamma}_{x,
y,z}$ is symmetrical under cyclic permutations of $x,y,z$. 

(b) Using the defining formulas for $\tilde{\gamma}_{x,y,z}$ and
$\tilde{n}_w$, the left hand side evaluates to
\begin{align*}
\Bigl(\sum_{\lambda \in \Lambda} &\sum_{1\leq i,j, k\leq 
d_\lambda} f_\lambda^{-1}\, c_{x^{-1},\lambda}^{ij}\, 
c_{y,\lambda}^{jk} \, c_{w,\lambda}^{ki}\Bigr) \Bigl(\sum_{w\in W}
\sum_{\mu \in \Lambda} \sum_{1\leq p\leq d_\mu} f_\mu^{-1} \, c_{w^{-1},
\mu}^{pp}\Bigr) \\ &= \sum_{\lambda,\mu \in \Lambda} \sum_{1\leq i,j,k\leq 
d_\lambda} \sum_{1\leq p\leq d_\mu} f_\lambda^{-1}\, f_\mu^{-1}\, 
c_{x^{-1},\lambda}^{ij}\, c_{y,\lambda}^{jk} \Bigl( \sum_{w \in W} 
c_{w,\lambda}^{ki}\, c_{w^{-1},\mu}^{pp}\Bigr).
\end{align*}
By the relations in Remark~\ref{bal1}($*$), the parenthesized
sum evaluates to $\delta_{kp} \delta_{ip}\delta_{\lambda\mu}f_\lambda$. 
Inserting this into the above expression yields
$\sum_{\lambda\in \Lambda} \sum_{1\leq i,j\leq d_\lambda} f_\lambda^{-1}\, 
c_{x^{-1},\lambda}^{ij}\, c_{y,\lambda}^{ji}=\delta_{xy}$,
where the last equality holds by Remark~\ref{bal1}($*^\prime$).
\end{proof}

\begin{prop} \label{asym1} $\tilde{\bJ}$ is an  associative algebra 
with identity element $1_{\tilde{\bJ}}$. 
\end{prop}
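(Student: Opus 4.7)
The plan is to verify the two defining axioms of an associative algebra with identity directly from the explicit formulas in Definition~\ref{tilde}, leveraging the orthogonality relations for leading matrix coefficients in Remark~\ref{bal1} and the two identities in Lemma~\ref{asym0}.

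For associativity, I would expand both $(t_xt_y)t_z$ and $t_x(t_yt_z)$ and show that, for every $v\in W$, the coefficient of $t_v$ on the two sides equals
\[\sum_{\lambda\in\Lambda}\sum_{1\le p,i,j,k\le d_\lambda} f_\lambda^{-1}\, c_{x,\lambda}^{pi}\,c_{y,\lambda}^{ij}\,c_{z,\lambda}^{jk}\,c_{v^{-1},\lambda}^{kp}.\]
Indeed, the coefficient of $t_v$ in $(t_xt_y)t_z$ is $\sum_{u}\tilde\gamma_{x,y,u^{-1}}\tilde\gamma_{u,z,v^{-1}}$; inserting the definition of $\tilde\gamma$ in each factor and performing the inner sum over $u\in W$ reduces, via Remark~\ref{bal1}$(*)$, to a single $f_\lambda$-factor that cancels one of the $f_\lambda^{-1}$'s and collapses two matrix indices. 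The same manipulation applied to $t_x(t_yt_z)=\sum_u\tilde\gamma_{y,z,u^{-1}}\tilde\gamma_{x,u,v^{-1}}t_v$ produces exactly the same fourfold sum after a relabeling of indices. So associativity reduces to a bookkeeping of matrix indices once the Schur-type orthogonality is applied.

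For the identity axiom, I compute $1_{\tilde\bJ}\,t_x=\sum_{w}\tilde n_w\,\tilde\gamma_{w,x,z^{-1}}\,t_z$ (sum over $z$). Using the cyclic symmetry from Lemma~\ref{asym0}(a), $\tilde\gamma_{w,x,z^{-1}}=\tilde\gamma_{x^{-1{-1}},\ldots}$— more precisely, rewrite $\tilde\gamma_{w,x,z^{-1}}=\tilde\gamma_{(z)^{-1}\cdot\,\text{(cyclic)}}$ so that Lemma~\ref{asym0}(b) applies and yields $\sum_w\tilde n_w\tilde\gamma_{w,x,z^{-1}}=\delta_{x,z}$. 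The computation for $t_x\,1_{\tilde\bJ}=\sum_{w,z}\tilde n_w\tilde\gamma_{x,w,z^{-1}}t_z$ is symmetric: cyclic symmetry puts the factor $\tilde n_w$ adjacent to the third slot of a $\tilde\gamma$, and Lemma~\ref{asym0}(b) again gives $\delta_{x,z}$. Hence both one-sided identities hold, so $1_{\tilde\bJ}$ is a two-sided identity.

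The technical obstacle is purely notational: one has to keep track of several nested summations over $\lambda$, matrix indices $i,j,k$, and Weyl group elements, and make the index substitutions in the two associativity expansions carefully so that the Schur relation $(*)$ of Remark~\ref{bal1} can be applied to collapse the sum over $u$ in exactly the right way. Once the right relabeling is chosen, the identity between the two fourfold sums is visible. No independent deep fact is needed beyond Lemma~\ref{asym0} and the orthogonality relations; in particular, no use of the Kazhdan--Lusztig basis or conjectures \textbf{P1}--\textbf{P15} is made, which is the whole point of the construction.
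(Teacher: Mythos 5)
Your proposal is correct and follows essentially the same route as the paper's own proof: expand both triple products, apply the orthogonality relations of Remark~\ref{bal1}($*$) to collapse the sum over $u\in W$ into the same fourfold sum of leading matrix coefficients, and verify the identity axiom via the cyclic symmetry of Lemma~\ref{asym0}(a) combined with Lemma~\ref{asym0}(b). Apart from the garbled index manipulation in your identity-element paragraph (which should read $\tilde{\gamma}_{x,w,z^{-1}}=\tilde{\gamma}_{z^{-1},x,w}$ so that Lemma~\ref{asym0}(b) applies directly), there is nothing to add.
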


\begin{proof} 
Let $x,y,z\in W$. We must check that $(t_xt_y)t_z=t_x(t_yt_z)$, which
is equivalent to 
\[ \sum_{u \in W} \tilde{\gamma}_{x,y,u^{-1}}\, \tilde{\gamma}_{u,
z,w^{-1}}=\sum_{u \in W} \tilde{\gamma}_{x,u, w^{-1}}\, 
\tilde{\gamma}_{y,z,u^{-1}}.\]
Using the defining formula, the left hand side evaluates to
\begin{align*}
\sum_{u \in W} &\Bigl(\sum_{\lambda \in \Lambda}\sum_{1\leq i,j,k\leq 
d_\lambda} f_\lambda^{-1}\, c_{x,\lambda}^{ij}\, c_{y,\lambda}^{jk}
\, c_{u^{-1},\lambda}^{ki}\Bigr)\Bigl(\sum_{\mu \in \Lambda}\sum_{1\leq 
p,q,r \leq d_\mu} f_\mu^{-1}\, c_{u,\lambda}^{pq}\, c_{z,\lambda}^{qr}
\, c_{w^{-1},\lambda}^{rp}\Bigr)\\
&=\sum_{\lambda,\mu\in \Lambda} \sum_{1\leq i,j,k\leq d_\lambda} 
\sum_{1\leq p,q,r\leq d_\mu} f_\lambda^{-1}\,f_\mu^{-1}\,c_{x,\lambda}^{ij}\, 
c_{y,\lambda}^{jk}\, c_{z,\lambda}^{qr} \, c_{w^{-1},\lambda}^{rp} 
\Bigl(\sum_{u \in W} c_{u^{-1},\lambda}^{ki}\, c_{u,\lambda}^{pq}\Bigr).
\end{align*}
By the relations in Remark~\ref{bal1}($*$), the parenthesized sum
evaluates to $\delta_{kq}\delta_{pi}\delta_{\lambda\mu}f_\lambda$. 
Hence, the above expression equals
\[\sum_{\lambda\in \Lambda} \sum_{1\leq i,j,k,r\leq d_\lambda}
f_\lambda^{-1}\,c_{x,\lambda}^{ij}\, c_{y,\lambda}^{jk}\,
 c_{z,\lambda}^{kr} \, c_{w^{-1},\lambda}^{ri}.\]
By a similar computation, the right hand side evaluates to
\begin{align*}
\sum_{u \in W} &\Bigl(\sum_{\lambda \in \Lambda}\sum_{1\leq i,j,k\leq 
d_\lambda} f_\lambda^{-1}\, c_{x,\lambda}^{ij}\, c_{u,\lambda}^{jk}
\, c_{w^{-1},\lambda}^{ki}\Bigr)\Bigl(\sum_{\mu \in \Lambda}\sum_{1\leq
p,q,r \leq d_\mu} f_\mu^{-1}\, c_{y,\lambda}^{pq}\, c_{z,\lambda}^{qr}
\, c_{u^{-1},\lambda}^{rp}\Bigr)\\ &=\sum_{\lambda,\mu \in \Lambda}
\sum_{1\leq i,j,k\leq d_\lambda} \sum_{1\leq p,q,r\leq d_\mu} 
f_\lambda^{-1}\, f_\mu^{-1}\, c_{x,\lambda}^{ij}\, c_{w^{-1},\lambda}^{ki}\, 
c_{y,\lambda}^{pq}\, c_{z,\lambda}^{qr} \Bigl(\sum_{u \in W} 
c_{u,\lambda}^{jk}\, c_{u^{-1},\lambda}^{rp}\Bigr)\\
&=\sum_{\lambda \in \Lambda} \sum_{1\leq i,j,k,q\leq d_\lambda}
f_\lambda^{-1}\, c_{x,\lambda}^{ij}\, c_{y,\lambda}^{jq}\,
c_{z,\lambda}^{qk}\,c_{w^{-1},\lambda}^{ki}.
\end{align*}
We see that both sides are equal, hence $\tilde{\bJ}$ is associative. To 
show that $1_{\tilde{\bJ}}$ is the identity element of $\tilde{\bJ}$, we
let $x \in W$ and note that
\begin{align*}
t_x1_{\tilde{\bJ}}&=\sum_{w\in W} \tilde{n}_w\, t_xt_w=\sum_{y \in W}
\Bigl(\sum_{w\in W} \tilde{n}_w\, \tilde{\gamma}_{x,w,y^{-1}}\Bigr)t_y\\
&=\sum_{y \in W} \Bigl(\sum_{w\in W} \tilde{n}_w\, \tilde{\gamma}_{y^{-1},
x,w}\Bigr)t_y=t_x \qquad \mbox{by Lemma~\ref{asym0}(a) and (b)}.
\end{align*}
A similar argument shows that $1_{\tilde{\bJ}}t_x=t_x$.  Thus, 
$1_{\tilde{\bJ}}$ is the identity element of $\tilde{\bJ}$.
\end{proof}

\begin{prop} \label{asym2} The linear map $\bar{\tau} \colon 
\tilde{\bJ} \rightarrow F$ defined by $\bar{\tau}(t_w)= 
\tilde{n}_{w^{-1}}$ is a symmetrizing trace such that 
$\bar{\tau}(t_xt_{y^{-1}})=\delta_{xy}$ for all $x,y\in W$.
\end{prop}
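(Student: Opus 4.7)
The plan is to reduce everything to the orthogonality relations of Remark~\ref{bal1} and to the cyclic symmetry of $\tilde{\gamma}$, exactly as in Lemma~\ref{asym0}. Two things must be proved: that $\bar{\tau}$ is a trace (so that the bilinear form $(a,b) \mapsto \bar{\tau}(ab)$ is symmetric, and in particular this form deserves to be called symmetrizing if non-degenerate), and the explicit duality $\bar{\tau}(t_x t_{y^{-1}}) = \delta_{xy}$, which will automatically give non-degeneracy.

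First I would unwind the definitions. By the multiplication rule and the definition of $\bar{\tau}$,
\[
\bar{\tau}(t_x t_y) = \sum_{z \in W} \tilde{\gamma}_{x,y,z^{-1}}\,\tilde{n}_{z^{-1}} = \sum_{w \in W} \tilde{\gamma}_{x,y,w}\,\tilde{n}_w
\]
after reindexing $w = z^{-1}$. Next, I would plug in the defining formulas for $\tilde{\gamma}_{x,y,w}$ and $\tilde{n}_w$ and collect the sum over $w$; the inner sum is $\sum_{w} c_{w,\lambda}^{ki}\, c_{w^{-1},\mu}^{pp}$, which by Remark~\ref{bal1}($*$) equals $\delta_{kp}\delta_{ip}\delta_{\lambda\mu}f_\lambda$. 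This is exactly the computation already done in the proof of Lemma~\ref{asym0}(b), and it yields the closed form
\[
\bar{\tau}(t_x t_y) = \sum_{\lambda\in\Lambda}\sum_{1\leq i,j\leq d_\lambda} f_\lambda^{-1}\, c_{x,\lambda}^{ij}\, c_{y,\lambda}^{ji}.
\]

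Now the trace property is immediate: renaming $i \leftrightarrow j$ in this sum shows that the right-hand side is unchanged when $x$ and $y$ are swapped, so $\bar{\tau}(t_x t_y) = \bar{\tau}(t_y t_x)$. Substituting $y \mapsto y^{-1}$ in the same identity, I get
\[
\bar{\tau}(t_x t_{y^{-1}}) = \sum_{\lambda\in\Lambda}\sum_{1\leq i,j\leq d_\lambda} f_\lambda^{-1}\, c_{x,\lambda}^{ij}\, c_{y^{-1},\lambda}^{ji} = \delta_{xy}
\]
by the inverted orthogonality relation Remark~\ref{bal1}($*'$). (Equivalently, the same equality drops out of Lemma~\ref{asym0}(b) applied with $x \mapsto x^{-1}$, $y \mapsto y^{-1}$, so no new calculation is strictly needed.)

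Finally, the identity $\bar{\tau}(t_x t_{y^{-1}}) = \delta_{xy}$ exhibits $\{t_w \mid w \in W\}$ and $\{t_{w^{-1}} \mid w \in W\}$ as a pair of dual bases of $\tilde{\bJ}$ with respect to the symmetric bilinear form $(a,b) \mapsto \bar{\tau}(ab)$. Hence this form is non-degenerate, and combined with the trace property this is precisely the assertion that $\bar{\tau}$ is a symmetrizing trace on $\tilde{\bJ}$. There is no real obstacle here; the only point that requires any care is the book-keeping of the four index sums when collapsing $\sum_w c_{w,\lambda}^{ki} c_{w^{-1},\mu}^{pp}$, but this was already handled verbatim in the proof of Lemma~\ref{asym0}(b).
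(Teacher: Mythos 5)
Your proof is correct and follows essentially the same route as the paper: both reduce to the computation in Lemma~\ref{asym0}(b) (i.e.\ the orthogonality relations of Remark~\ref{bal1}) to obtain $\bar{\tau}(t_xt_{y^{-1}})=\delta_{xy}$, and then read off the trace property and non-degeneracy. The only cosmetic difference is that you derive the trace property from the $i\leftrightarrow j$ symmetry of the closed formula $\sum_{\lambda}\sum_{i,j}f_\lambda^{-1}c_{x,\lambda}^{ij}c_{y,\lambda}^{ji}$, whereas the paper deduces it directly from the duality relation on basis elements; both are valid.
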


\begin{proof} Let $x,y \in W$. Then, using Lemma~\ref{asym0}(b), we obtain
\[ \bar{\tau}(t_{x^{-1}}t_y)=\sum_{w \in W} \tilde{\gamma}_{x^{-1},y,
w^{-1}}\, \bar{\tau}(t_w)=\sum_{w \in W} \gamma_{x^{-1},y,w^{-1}}\, 
\tilde{n}_{w^{-1}}=\delta_{xy}.\]
This implies that $\bar{\tau}(t_xt_y)=\bar{\tau}(t_yt_x)$ for all $x,y
\in W$, hence $\bar{\tau}$ is a trace function. We also see that
$\{t_w \mid w \in W\}$ and $\{t_{w^{-1}}\mid w \in W\}$ form a pair of dual 
bases, hence $\bar{\tau}$ is non-degenerate. Thus, $\tilde{\bJ}$
is a symmetric algebra with trace form $\bar{\tau}$.
\end{proof}

\begin{prop} \label{asym3} For $\lambda \in \Lambda$, define a linear map 
\[\bar{\rho}^\lambda \colon \tilde{\bJ}\rightarrow M_{d_\lambda}(F),\qquad 
t_w \mapsto \bigl(c_{w,\lambda}^{ij}\bigr)_{1\leq i,j \leq d_\lambda}.\]
Then $\bar{\rho}^\lambda$ is an absolutely irreducible representation of 
$\tilde{\bJ}$, and all irreducible representations of $\tilde{\bJ}$ (up 
to equivalence) arise in this way. In particular, $\tilde{\bJ}$ is a 
split semisimple algebra. (Recall that $F$ is any field containing
$\Z_W$.)
\end{prop}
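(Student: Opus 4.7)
The plan is to verify that each $\bar{\rho}^\lambda$ is a unital algebra homomorphism and then bundle them together into a single map
\[\Phi\colon \tilde{\bJ}\longrightarrow \bigoplus_{\lambda\in\Lambda}M_{d_\lambda}(F),\qquad h\mapsto \bigl(\bar{\rho}^\lambda(h)\bigr)_{\lambda\in\Lambda},\]
which I will show is an isomorphism of $F$-algebras. The entire argument rests on the two orthogonality relations $(*)$ and $(*^\prime)$ of Remark~\ref{bal1}; nothing conceptual is difficult, but the verification of multiplicativity is the one place where careful bookkeeping is required.

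For multiplicativity, I would expand the $(i,j)$-entry of $\bar{\rho}^\lambda(t_xt_y)=\sum_{z\in W}\tilde\gamma_{x,y,z^{-1}}\,c_{z,\lambda}^{ij}$ by substituting the definition of $\tilde\gamma_{x,y,z^{-1}}$ and interchanging the summations. The inner sum $\sum_{z\in W}c_{z,\lambda}^{ij}\,c_{z^{-1},\mu}^{rp}$ collapses via $(*)$ to $\delta_{ip}\delta_{jr}\delta_{\lambda\mu}f_\lambda$, so that after cancelling $f_\lambda$ against $f_\mu^{-1}$ the remaining expression simplifies to $\sum_q c_{x,\lambda}^{iq}\,c_{y,\lambda}^{qj}$, which is precisely $(\bar{\rho}^\lambda(t_x)\,\bar{\rho}^\lambda(t_y))_{ij}$. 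A completely analogous calculation, this time for $\bar{\rho}^\lambda(1_{\tilde{\bJ}})_{ij}=\sum_{w\in W}\tilde{n}_w\,c_{w,\lambda}^{ij}$, yields $\delta_{ij}$ after a single application of $(*)$, so $\bar{\rho}^\lambda$ is unital.

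For $\Phi$ to be an isomorphism, injectivity follows directly from $(*^\prime)$: if $\sum_{w\in W}a_w t_w\in\ker\Phi$, then $\sum_w a_w c_{w,\lambda}^{ij}=0$ for every $\lambda,i,j$; multiplying by $f_\lambda^{-1}c_{y^{-1},\lambda}^{ji}$ and summing over $\lambda,i,j$ recovers $a_y=0$ for each $y\in W$. A dimension count then forces bijectivity, via
\[\dim_F\tilde{\bJ}=|W|=\sum_{\lambda\in\Lambda}d_\lambda^2=\dim_F\bigoplus_{\lambda\in\Lambda}M_{d_\lambda}(F),\]
where the middle equality holds because $F$ is a splitting field for $W$, so that $F[W]\cong\prod_{\lambda\in\Lambda}M_{d_\lambda}(F)$ by the Wedderburn theorem.

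Since $\Phi$ is an algebra isomorphism onto a product of matrix algebras, $\tilde{\bJ}$ is split semisimple; each $\bar{\rho}^\lambda$ is the composition of $\Phi$ with projection onto the $\lambda$-th component, hence is surjective onto the simple algebra $M_{d_\lambda}(F)$ and therefore absolutely irreducible. The representations $\bar{\rho}^\lambda$ for distinct $\lambda\in\Lambda$ land in distinct Wedderburn components and are thus pairwise inequivalent; by the same Wedderburn decomposition they exhaust $\Irr(\tilde{\bJ})$. The only aspect not yet addressed is independence of the chosen balanced representation $\rho^\lambda$, but this is deferred to the end of the section as indicated after Definition~\ref{tilde}.
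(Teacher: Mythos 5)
Your proof is correct, and it takes a slightly different route from the paper's. The paper first establishes that each $\bar{\rho}^\lambda$ is absolutely irreducible directly from the orthogonality relations: it invokes Proposition~\ref{asym2} to view $\{t_w\}$ and $\{t_{w^{-1}}\}$ as a pair of dual bases for a symmetrizing trace, interprets $(*)$ as Schur orthogonality for the coefficient functions of $\bar{\rho}^\lambda$, and then cites the general fact (\cite[Remark~7.2.3]{gepf}) that such relations force absolute irreducibility. Pairwise inequivalence for $\lambda\neq\mu$ is read off the same way, and the dimension count $|W|=\sum_\lambda d_\lambda^2$ finishes. You instead assemble the maps into $\Phi\colon\tilde{\bJ}\to\bigoplus_\lambda M_{d_\lambda}(F)$, prove injectivity directly from the inverted relations $(*^\prime)$, and use the dimension count to conclude $\Phi$ is an isomorphism; absolute irreducibility, inequivalence, and exhaustiveness then all fall out of the Wedderburn decomposition of the target. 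Both arguments rest on exactly the orthogonality relations of Remark~\ref{bal1}. The paper's version is a bit more compact because it outsources a step to a quoted lemma on symmetric algebras; yours is a bit more self-contained and structural, trading that citation for the explicit construction of $\Phi$ and one extra use of $(*^\prime)$. Either is a perfectly acceptable proof.
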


\begin{proof}
We must show that $\bar{\rho}^\lambda(t_xt_y)=\bar{\rho}^\lambda(t_x)
\bar{\rho}^\lambda(t_y)$ for all $x,y\in W$. Now, by the definition of
$\tilde{\gamma}_{x,y,z}$, we have
\[\bar{\rho}_{ij}^\lambda(t_xt_y)=\sum_{z \in W} \tilde{\gamma}_{x,y,
z^{-1}}\, c_{z,\lambda}^{ij}=\sum_{z \in W} \Bigl(\sum_{\mu\in \Lambda}
\sum_{1\leq p,q,r\leq d_\mu} f_\mu^{-1}\,c_{x,\mu}^{pq}\,c_{y,\mu}^{qr}\,
c_{z^{-1},\mu}^{rp}\Bigr) c_{z,\lambda}^{ij}.\]
Using the Schur relations in Remark~\ref{bal1}($*$), the right hand side
evaluates to 
\[\sum_{\mu\in \Lambda}\sum_{1\leq p,q,r\leq d_\mu} f_\mu^{-1}
\,c_{x,\mu}^{pq}\,c_{y,\mu}^{qr}\,\delta_{rj}\delta_{pi}\delta_{\lambda\mu}
f_\lambda=\sum_{1\leq q\leq d_\lambda} \,c_{x,\mu}^{iq}\,c_{y,\mu}^{qj}=
\bigl(\bar{\rho}^\lambda(t_x)\bar{\rho}^\lambda(t_y)\bigr)_{ij},\]
%\&=\sum_{\mu\in \Lambda}\sum_{1\leq p,q,r\leq d_\mu} 
%f_\mu^{-1} \,c_{x,\mu}^{pq}\,c_{y,\mu}^{qr}\,\Bigl(\sum_{z \in W} 
%c_{z^{-1},\mu}^{rp} \, c_{z,\lambda}^{ij} \Bigr)\\
%&=\sum_{\mu\in \Lambda}\sum_{1\leq p,q,r\leq d_\mu} f_\mu^{-1}
%\,c_{x,\mu}^{pq}\,c_{y,\mu}^{qr}\,\delta_{rj}\delta_{pi}\delta_{\lambda\mu}
%f_\lambda \qquad \mbox{by Remark~\ref{bal1}($*$)}\\
%&=\sum_{1\leq q\leq d_\lambda} \,c_{x,\mu}^{iq}\,c_{y,\mu}^{qj}=
%\bigl(\bar{\rho}^\lambda(t_x)\bar{\rho}^\lambda(t_y)\bigr)_{ij},
%\end{align*}
as required. To show that $\bar{\rho}^\lambda$ is absolutely irreducible, 
we argue as follows. By Proposition~\ref{asym2}, we have a symmetrizing 
trace where $\{t_w \mid w \in W\}$ and $\{t_{w^{-1}}\mid w \in W\}$ form a 
pair of dual bases. Consequently, the relations in Remark~\ref{bal1}($*$) 
can be interpreted as orthogonality relations for the coefficients of the 
representations $\bar{\rho}^\lambda$. Thus, we have:
\[ \sum_{w \in W} \bar{\rho}^\lambda_{ij}(t_w)\, \bar{\rho}^\lambda_{kl}
(t_{w^{-1}})=\delta_{il}\delta_{jk}f_\lambda \quad \mbox{for all
$1\leq i,j,k,l\leq d_\lambda$}.\]
By \cite[Remark~7.2.3]{gepf}, the validity of these relations implies
that $\bar{\rho}^\lambda$ is absolutely irreducible.
Finally, if $\lambda\neq \mu$ in $\Lambda$, then we also have the 
relations:
\[ \sum_{w \in W} \bar{\rho}^\lambda_{ij}(t_w)\, \bar{\rho}^\mu_{kl}
(t_{w^{-1}})=0.\]
In particular, this implies that $\bar{\rho}^\lambda$ and 
$\bar{\rho}^\mu$ are not equivalent. 

Since $\dim \tilde{\bJ}=|W|=\sum_{\lambda \in \Lambda} d_\lambda^2$, we can 
now conclude that $\tilde{\bJ}$ is split semisimple, and that 
$\{\bar{\rho}^\lambda \mid \lambda \in \Lambda\}$ are the irreducible 
representations of $\tilde{\bJ}$ (up to equivalence).
\end{proof}

We can now settle the question to what extent the ring $\tilde{\bJ}$
depends on the choice of the balanced representations $\rho^\lambda$.

\begin{lem} \label{bal3} Assume that $\rho^\lambda$ and $\sigma^\lambda$
are balanced and equivalent over $K$. Then there exists a matrix
$U^\lambda \in M_{d_\lambda}(\cO)$ such that 
\[ \det(U^\lambda)\in \cO^\times \quad \mbox{and}\quad 
U^\lambda\,\rho^\lambda(T_w)=\sigma^\lambda(T_w)\, U^\lambda
\quad \mbox{for all $w \in W$}.\]
Denote the leading matrix coefficients with respect to $\sigma^\lambda$ by 
$d_{w,\lambda}^{ij}$. Then, for a given element $w \in W$, we have 
\[ c_{w,\lambda}^{ij}\neq 0 \mbox{ for some  $i,j$} \quad
\Leftrightarrow \quad d_{w,\lambda}^{kl} \neq 0 \mbox{ for some $k,l$}.\]
\end{lem}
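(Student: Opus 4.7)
The plan is to produce $U^\lambda$ by normalising any $K$-intertwiner to have entries in $\cO$ with at least one nonzero constant term, and then to use the fact that the leading coefficient matrices $C_w^\lambda := (c_{w,\lambda}^{ij})$ span $M_{d_\lambda}(F)$ to force the reduction modulo $\fp$ to be invertible.

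\textbf{Producing $U^\lambda$.} Since $\rho^\lambda$ and $\sigma^\lambda$ are equivalent over $K$, pick any $V \in GL_{d_\lambda}(K)$ with $V\rho^\lambda(T_w) = \sigma^\lambda(T_w) V$ for all $w \in W$. Let $\gamma \in \Gamma$ be the largest element such that $\varepsilon^\gamma V \in M_{d_\lambda}(\cO)$, and set $U^\lambda := \varepsilon^\gamma V$. By the choice of $\gamma$, at least one entry of $U^\lambda$ has nonzero constant term, so the reduction $\bar U^\lambda \in M_{d_\lambda}(F)$ (obtained by applying the constant-term map entrywise) is nonzero; and the intertwining identity $U^\lambda \rho^\lambda(T_w) = \sigma^\lambda(T_w) U^\lambda$ is preserved. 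Multiplying both sides by $(-1)^{l(w)}\varepsilon^{\ba_\lambda}$ and using the balance hypothesis on both $\rho^\lambda$ and $\sigma^\lambda$ to stay in $M_{d_\lambda}(\cO)$, then passing to constant terms entrywise, yields
\[ \bar U^\lambda\, C_w^\lambda \;=\; D_w^\lambda\, \bar U^\lambda \qquad \text{in } M_{d_\lambda}(F),\text{ for all } w \in W, \]
where $D_w^\lambda := (d_{w,\lambda}^{ij})$.

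\textbf{Invertibility of $\bar U^\lambda$ — the main point.} Let $N := \ker \bar U^\lambda \subseteq F^{d_\lambda}$. The intertwining relation shows $C_w^\lambda(N) \subseteq N$ for every $w$. The orthogonality relations in Remark~\ref{bal1}($*$) applied to $\rho^\lambda$ imply that the $d_\lambda^2$ functions $w \mapsto c_{w,\lambda}^{ij}$ on $W$ are linearly independent, and therefore the family $\{C_w^\lambda : w \in W\}$ spans $M_{d_\lambda}(F)$ as an $F$-vector space (this is precisely the mechanism used in Proposition~\ref{asym3} to prove absolute irreducibility of $\bar\rho^\lambda$). Hence $N$ is stable under all of $M_{d_\lambda}(F)$, so $N = 0$ or $N = F^{d_\lambda}$; the latter is excluded since $\bar U^\lambda \neq 0$. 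Therefore $\bar U^\lambda \in GL_{d_\lambda}(F)$, i.e.\ $\det(U^\lambda) \in \cO^\times$.

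\textbf{Equivalence of nonvanishing.} Finally, invertibility of $\bar U^\lambda$ gives $D_w^\lambda = \bar U^\lambda\, C_w^\lambda\, (\bar U^\lambda)^{-1}$, so for a fixed $w$ we have $C_w^\lambda = 0$ if and only if $D_w^\lambda = 0$; equivalently, some $c_{w,\lambda}^{ij} \neq 0$ iff some $d_{w,\lambda}^{kl} \neq 0$. The only delicate step is the invertibility in the previous paragraph, which however follows directly from the balance hypothesis combined with the orthogonality relations available for any balanced representation.
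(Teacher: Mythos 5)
Your proof is correct and follows essentially the same route as the paper: normalise the intertwiner into $M_{d_\lambda}(\cO)$ so that at least one entry is a unit, reduce the intertwining relation modulo $\fp$, and kill the kernel of $\bar{U}^\lambda$ by irreducibility of $\bar{\rho}^\lambda$ (which you re-derive through the spanning consequence of the orthogonality relations --- exactly the mechanism of Proposition~\ref{asym3}). One wording slip: you want the \emph{smallest} $\gamma$ with $\varepsilon^\gamma V\in M_{d_\lambda}(\cO)$, not the largest, since the set of admissible $\gamma$ is upward closed.
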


\begin{proof} Since $\rho^\lambda$ and $\sigma^\lambda$ are equivalent 
over $K$, there exists an invertible matrix $U^\lambda \in M_{d_\lambda}(K)$
such that $U^\lambda \rho^\lambda(T_w)=\sigma^\lambda(T_w)U^\lambda$
for all $\lambda \in \Lambda$. Multiplying $U^\lambda$ by a suitable scalar,
we may assume that all coefficients of $U^\lambda$ lie in $\cO$ and that
at least one coefficient does not lie in $\fp$. 

We show that $\det(U^\lambda) \in \cO^\times$. For this purpose, let 
$\bar{U}^\lambda$ be the matrix whose $(i,j)$-coefficient is the constant 
term of the $(i,j)$-coefficient of $U^\lambda$. Multiplying the relation 
$U^\lambda\rho^\lambda (T_w)=\sigma^\lambda (T_w) U^\lambda$ by 
$\varepsilon^{\ba_\lambda}$ and taking constant terms, we see that 
$\bar{U}^\lambda\in M_{d_\lambda}(F)$ is a non-zero matrix such that 
\[ \bar{U}^\lambda\,\bar{\rho}^\lambda(t_w)=\bar{\sigma}^\lambda(t_w)\, 
\bar{U}^\lambda \quad \mbox{for all $w \in W$},\]
where $\bar{\sigma}^\lambda(t_w):=(d_{w,\lambda}^{ij})_{1\leq
i,j\leq d_\lambda}$. (Note that, at this stage, we do not know yet if 
$\bar{\sigma}^\lambda$ is a representation of $\tilde{\bJ}$ but in any 
case, this is irrelevant for the argument to follow.) Now let $v \in 
F^{d_\lambda}$ be such that $\bar{U}^\lambda v=0$. Then we also have 
\[ \bar{U}^\lambda\,\bigl(\bar{\rho}^\lambda(t_w)v\bigr)=
\bar{\sigma}^\lambda(t_w)\, \bar{U}^\lambda v=0,\]
and so the nullspace of $\bar{U}^\lambda$ is a $\bar{\rho}^\lambda$-invariant
subspace of $U^{d_\lambda}$. Since $\bar{\rho}^\lambda$ is irreducible
and $\bar{U}^\lambda \neq 0$, we conclude that the nullspace is $0$
and, hence, $\bar{U}^\lambda$ is invertible, as claimed.

The assertion about the leading matrix coefficients is now clear.
\end{proof}

\begin{prop} \label{asymfin} The ring $\tilde{\bJ}$ does not depend
on the choice of the balanced representations $\{\rho^\lambda\mid
\lambda \in \Lambda\}$.
\end{prop}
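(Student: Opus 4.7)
The plan is to show that the structure constants $\tilde{\gamma}_{x,y,z}$ and the trace values $\tilde{n}_w$ are actually independent of the choice of balanced representations, so that the algebra $\tilde{\bJ}$ with its basis $\{t_w\}$ and identity element are literally unchanged. The key tool is Lemma~\ref{bal3}, together with the observation that $\tilde{\gamma}_{x,y,z}$ and $\tilde{n}_w$ can be rewritten as traces, and traces are invariant under conjugation.

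Concretely, I would let $\{\rho^\lambda\}$ and $\{\sigma^\lambda\}$ be two families of balanced representations, and let $c_{w,\lambda}^{ij}$ and $d_{w,\lambda}^{ij}$ denote the corresponding leading matrix coefficients. Since both $\rho^\lambda$ and $\sigma^\lambda$ afford $E^\lambda_\varepsilon$, they are equivalent over $K$, and Lemma~\ref{bal3} supplies an intertwiner $U^\lambda\in M_{d_\lambda}(\cO)$ with $\det(U^\lambda)\in\cO^\times$. Multiplying the identity $U^\lambda \rho^\lambda(T_w) = \sigma^\lambda(T_w)\,U^\lambda$ by $(-1)^{l(w)}\varepsilon^{\ba_\lambda}$ and taking constant terms yields a matrix $\bar{U}^\lambda\in GL_{d_\lambda}(F)$ satisfying
\[ \bar{U}^\lambda\,\bar{\rho}^\lambda(t_w) = \bar{\sigma}^\lambda(t_w)\,\bar{U}^\lambda \qquad \text{for all } w\in W,\]
where $\bar{\sigma}^\lambda(t_w) := (d_{w,\lambda}^{ij})_{i,j}$. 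Thus the matrices of leading coefficients for the two choices are conjugate by $\bar{U}^\lambda$.

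Next I would recognise that for any $x,y,z\in W$, the defining formula rewrites as
\[ \tilde{\gamma}_{x,y,z} = \sum_{\lambda\in\Lambda} f_\lambda^{-1}\,\operatorname{trace}\bigl(\bar{\rho}^\lambda(t_x)\,\bar{\rho}^\lambda(t_y)\,\bar{\rho}^\lambda(t_z)\bigr),\]
and similarly $\tilde{n}_w = \sum_{\lambda}f_\lambda^{-1}\operatorname{trace}\bigl(\bar{\rho}^\lambda(t_{w^{-1}})\bigr)$. Since $f_\lambda$ is a representation-theoretic invariant (independent of basis choice, as noted in Remark~\ref{ainv}), and since the trace of a product of matrices is unchanged when each factor is conjugated by the same invertible matrix $\bar{U}^\lambda$, these expressions yield the same scalars when computed with $\{\sigma^\lambda\}$ in place of $\{\rho^\lambda\}$. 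Hence $\tilde{\gamma}_{x,y,z}$, the set $\tilde{\cD}$, the coefficients $\tilde{n}_w$, and therefore the multiplication on $\tilde{\bJ}$ and its identity $1_{\tilde{\bJ}}$, are all independent of the choice.

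There is no real obstacle here, since Lemma~\ref{bal3} has already done the work of producing an intertwiner whose constant term is invertible; the only thing to verify is the ``trace reformulation'' of the defining formulas, which is immediate from cyclic symmetry of the indices in $c_{x,\lambda}^{ij}c_{y,\lambda}^{jk}c_{z,\lambda}^{ki}$. The one point worth being careful about is that $\bar{U}^\lambda$ itself lies in $GL_{d_\lambda}(F)$ (not merely in $M_{d_\lambda}(\cO)$ with $\cO^\times$ determinant), but this is already established inside the proof of Lemma~\ref{bal3} via the irreducibility of $\bar{\rho}^\lambda$, so it may be invoked directly.
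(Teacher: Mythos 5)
Your argument coincides with the paper's own: both rewrite $\tilde{\gamma}_{x,y,z}$ and $\tilde{n}_w$ as $f_\lambda^{-1}$-weighted traces of products of the matrices $\bar{\rho}^\lambda(t_\bullet)$, invoke Lemma~\ref{bal3} to produce the invertible constant-term intertwiners $\bar{U}^\lambda$, and conclude by conjugation-invariance of trace. The proof is correct and essentially identical to the one in the paper.
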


\begin{proof} Using the notation in Proposition~\ref{asym3}, the
defining formulas in Definition~\ref{tilde} read:
\begin{align*}
\tilde{\gamma}_{x,y,z}&=\sum_{\lambda \in \Lambda}
\sum_{1\leq i,j,k\leq d_\lambda} f_\lambda^{-1} \,c_{x,\lambda}^{ij}
\, c_{y,\lambda}^{jk}\, c_{z,\lambda}^{ki}=\sum_{\lambda \in
\Lambda} f_\lambda^{-1}\,\mbox{trace}\bigl(\bar{\rho}^\lambda(t_x)\,
\bar{\rho}^\lambda(t_y) \, \bar{\rho}^\lambda(t_z)\bigr),\\
\tilde{n}_w &=\sum_{\lambda\in \Lambda} \sum_{1\leq i\leq d_\lambda}
f_\lambda^{-1} \,c_{w^{-1},\lambda}^{ii}=\sum_{\lambda\in \Lambda}
f_\lambda^{-1}\, \mbox{trace}\bigl(\bar{\rho}^\lambda(t_{w^{-1}})\bigr).
\end{align*}
Now assume that $\{\sigma^\lambda \mid \lambda \in \Lambda\}$ also is
a collection of balanced representations where $\rho^\lambda$ and
$\sigma^\lambda$ are equivalent over $K$. Let $d_{w,\lambda}^{ij}$ be
the leading matrix coefficients defined with respect to $\sigma^\lambda$,
and set $\bar{\sigma}^\lambda(t_w):=\bigl(d_{w,\lambda}^{ij}\bigr)_{1\leq 
i,j\leq d_\lambda}$. By the proof of Lemma~\ref{bal3}, there exist invertible 
matrices $\bar{U}^\lambda \in M_{d_\lambda}(F)$ such that 
\[ \bar{\sigma}^\lambda(T_w)=(\bar{U}^\lambda)^{-1}\,\bar{\rho}^\lambda(t_w)
\, \bar{U}^\lambda \quad \mbox{for all $w \in W$}.\]
Hence the above expressions immediately show that $\tilde{\gamma}_{x,y,z}$
and $\tilde{n}_w$ are independent of whether we use $\rho^\lambda$ or
$\sigma^\lambda$ to define them.
\end{proof}

\begin{prop} \label{asym4} The linear map $\tilde{\bJ}\rightarrow 
\tilde{\bJ}$ defined by $t_w \mapsto t_{w^{-1}}$ is an anti-involution, 
that is, we have $\tilde{\gamma}_{x,y,z}=\tilde{\gamma}_{y^{-1},
x^{-1},z^{-1}}$ for all $x,y,z\in W$.
\end{prop}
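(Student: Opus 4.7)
The plan is to pick particularly symmetric representatives in each equivalence class of balanced representations, and then reduce the claim to cyclic invariance of the matrix trace. By Proposition~\ref{asymfin}, the constants $\tilde\gamma_{x,y,z}$ do not depend on the choice of $\rho^\lambda$, so I may replace my initial choice with the one furnished by Proposition~\ref{balex} (enlarging the ground field $F$ to a real subfield of $\C$ if necessary; Proposition~\ref{asymfin} ensures that such an enlargement does not alter the $\tilde\gamma_{x,y,z}$). That special $\rho^\lambda$ satisfies
\[\Delta^\lambda\,\rho^\lambda(T_{w^{-1}})=\rho^\lambda(T_w)^{\operatorname{tr}}\,\Delta^\lambda \qquad (w\in W),\]
with $\Delta^\lambda \in M_{d_\lambda}(\cO)$ diagonal and having strictly positive real numbers $b_1^\lambda,\dots,b_{d_\lambda}^\lambda$ as its diagonal constant terms.

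Multiplying this identity by $(-1)^{l(w)}\varepsilon^{\ba_\lambda}$ and passing to constant terms, using $l(w^{-1})=l(w)$, I would obtain the matrix identity
\[\bar\rho^\lambda(t_{w^{-1}}) = (D^\lambda)^{-1}\,\bar\rho^\lambda(t_w)^{\operatorname{tr}}\,D^\lambda \qquad (w\in W),\]
where $D^\lambda=\operatorname{diag}(b_1^\lambda,\dots,b_{d_\lambda}^\lambda)$ is an invertible diagonal matrix in $M_{d_\lambda}(F)$. In other words, the set-theoretic map $t_w\mapsto t_{w^{-1}}$ is implemented, on each irreducible matrix representation $\bar\rho^\lambda$ of $\tilde\bJ$ from Proposition~\ref{asym3}, by the composition of matrix transposition with conjugation by the constant diagonal matrix $D^\lambda$.

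The final step is a trace calculation. Rewriting the defining formula for $\tilde\gamma$ in the compact form
\[\tilde\gamma_{x,y,z}=\sum_{\lambda\in\Lambda} f_\lambda^{-1}\,\operatorname{trace}\bigl(\bar\rho^\lambda(t_x)\,\bar\rho^\lambda(t_y)\,\bar\rho^\lambda(t_z)\bigr),\]
I would substitute the identity above into the definition of $\tilde\gamma_{y^{-1},x^{-1},z^{-1}}$. The inserted copies of $D^\lambda$ and $(D^\lambda)^{-1}$ collapse under the trace by cyclicity; what remains is the trace of a product of three transposes, which agrees with the trace of the product of the original matrices taken in reversed order. One further application of cyclicity, combined with Lemma~\ref{asym0}(a), then recovers $\tilde\gamma_{x,y,z}$.

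The main obstacle is the initial reduction to the special balanced representations of Proposition~\ref{balex}: without Proposition~\ref{asymfin}, it would not be clear that the identity $\tilde\gamma_{x,y,z}=\tilde\gamma_{y^{-1},x^{-1},z^{-1}}$ can be tested on a single convenient choice. Once that reduction is in place, the remainder is a bookkeeping exercise using only cyclicity of the trace and the transpose-invariance of the trace on square matrices.
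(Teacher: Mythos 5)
Your proof is correct, and it follows the same overall strategy as the paper: invoke Proposition~\ref{asymfin} to pass to a convenient choice of balanced representations, exploit the symmetry relation coming from the Gram matrix, and finish by cyclicity. The one small difference is in the normalization step. The paper passes all the way to the \emph{orthogonal} representations of Remark~\ref{bal0} (which requires $F=\R$ so that square roots of the diagonal entries $b_i$ exist); this forces $D^\lambda$ to be the identity, so the leading coefficients satisfy $c_{w,\lambda}^{ij}=c_{w^{-1},\lambda}^{ji}$ on the nose, and the claim is then immediate from the defining formula. You instead stop at the representations of Proposition~\ref{balex}, keeping a nontrivial positive diagonal $D^\lambda$, and observe that conjugation by $D^\lambda$ is invisible under the trace. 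This costs an extra line of bookkeeping but buys a small amount of generality (no square-root rescaling is needed), and it makes explicit the trace-cyclicity mechanism that the paper's version leaves implicit. Both are valid; the paper's is marginally shorter, yours is marginally more self-contained since it does not need Remark~\ref{bal0}. One minor note: in the last step the citation of Lemma~\ref{asym0}(a) is not actually needed --- once you have arrived at $\sum_\lambda f_\lambda^{-1}\operatorname{trace}\bigl(\bar\rho^\lambda(t_z)\bar\rho^\lambda(t_x)\bar\rho^\lambda(t_y)\bigr)$, a single further cyclic rotation of the trace already gives $\tilde\gamma_{x,y,z}$ directly.
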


\begin{proof} By Proposition~\ref{asymfin}, we may assume that $F=\R$ and 
that our balanced representations $\rho^\lambda$ are chosen such that they 
are orthogonal, as in Remark~\ref{bal0}. Then the corresponding leading 
matrix coefficients have the additional property $c_{w,\lambda}^{ij}=
c_{w^{-1}, \lambda}^{ji}$. The defining formula then immediately shows that 
$\tilde{\gamma}_{x,y,z}=\tilde{\gamma}_{y^{-1}, x^{-1},z^{-1}}$ for all 
$x,y,z\in W$. 
\end{proof}

\begin{rem} \label{preordL} Let $\lambda \in \Lambda$ and $w \in W$. As in 
\cite[Def.~3.1]{klremarks}, we write $E^\lambda \leftrightsquigarrow_L w$
if $c_{w,\lambda}^{ij}\neq 0$ for some $i,j\in \{1,\ldots,d_\lambda\}$.
By Lemma~\ref{bal3}, this relation does not depend on the choice of
the balanced representations $\rho^\lambda$. In particular, choosing 
$\rho^\lambda$ as in  Remark~\ref{bal0}, we see that 
\begin{equation*}
E^\lambda\leftrightsquigarrow_L w\qquad\Leftrightarrow\qquad 
E^\lambda\leftrightsquigarrow_L w^{-1}.  \tag{a}
\end{equation*}
Now define a graph as follows: The vertices are in bijection with the
elements of $W$; two vertices corresponding to elements $x\neq y$ in $W$ 
are joined by an edge if there exists some $\lambda \in \Lambda$ such 
that $E^\lambda \leftrightsquigarrow_{L} x$ and $E^\lambda 
\leftrightsquigarrow_{L} y$. Considering the connected components of this 
graph, we obtain a partition of $W$; the pieces in this partition will 
be called the {\em $L$-blocks} of $W$. By \cite[Lemma~3.2]{klremarks}, we
have that
\begin{equation*}
\mbox{each $L$-block is contained in a two-sided cell of $W$.}\tag{b}
\end{equation*}
(See \cite[Chap.~8]{Lusztig03} for the definition of two-sided cells; if 
{\bf P1}--{\bf P15} hold, then one can show that the $L$-blocks are 
precisely the two-sided cells of $W$.)

For an $L$-block $\fF$ of $W$, we define $\tilde{\bJ}_{\fF}=\langle t_w 
\mid w \in \fF\rangle_F \subseteq \tilde{\bJ}$. Then one easily checks
that $\tilde{\bJ}_{\fF}$ is a two-sided ideal of $\tilde{\bJ}$. (Indeed,
let $x\in W$, $w \in \fF$; we must show that $t_xt_w$ and $t_wt_x$ lie in
$\tilde{\bJ}_{\fF}$. Now, $t_xt_w=\sum_{y \in W} \tilde{\gamma}_{x,w,y^{-1}}
t_y$. Assume that $\tilde{\gamma}_{x,w,y^{-1}} \neq 0$. Then, by the 
defining formula, there exists some $\lambda \in \Lambda$ such that 
$E^\lambda \leftrightsquigarrow_L x$, $E^\lambda\leftrightsquigarrow_L w$ 
and $E^\lambda \leftrightsquigarrow_{L} y^{-1}$. By (a), we also have 
$E^\lambda \leftrightsquigarrow_L y$. It follows that $x,y,y^{-1} \in 
\fF$. Thus, $t_xt_w \in \tilde{\bJ}_{\fF}$. The argument for $t_wt_x$ is 
similar.) We obtain a decomposition as a direct sum of two-sided ideals
\begin{equation*}
\tilde{\bJ}={\displaystyle \bigoplus}_{\fF} \tilde{\bJ}_{\fF} \qquad
\mbox{(sum over all $L$-blocks $\fF$ of $W$)}.\tag{c}
\end{equation*}
Now, given $\lambda \in \Lambda$, there will be a unique $L$-block 
$\fF$ such that $\bar{\rho}^\lambda(t_w) \neq 0$ for some $w \in \fF$. We
denote this $L$-block by $\fF_\lambda$. 
\end{rem}

%In analogy to Lusztig
%\cite[20.2]{Lusztig03} or \cite[p.~139]{LuBook}, we write 
%$E^\lambda \sim_{L} E^\mu$ if for some $x \in W$ we have
%$E^\lambda \sim_{L} x$ and $E^\mu \sim_{L} x$.
%
%\begin{defn} \label{preordL} For a two-sided cell $\tc$ of $W$, we define 
%$\tilde{\bJ}_{\tc}=\langle t_w \mid w \in \tc\rangle_F \subseteq 
%\tilde{\bJ}$. Then Proposition~\ref{asym5c} shows that $\tilde{\bJ}_{\tc}$ 
%is a two-sided ideal of $\tilde{\bJ}$. We obtain a decomposition
%\begin{center}
%$\tilde{\bJ}={\displaystyle \bigoplus}_{\tc} \tilde{\bJ}_{\tc} \qquad
%\mbox{(sum over all two-sided cells $\tc$ of $W$)}.$
%\end{center}
%Now, given $\lambda \in \Lambda$, there will be a unique two-sided cell
%$\tc$ such that $\bar{\rho}^\lambda(t_w) \neq 0$ for some $w \in \tc$. We
%denote this two-sided cell by $\tc_\lambda$. In analogy to Lusztig
%\cite[20.2]{Lusztig03} or \cite[p.~139]{LuBook}, we write $E^\lambda
%\sim_{\cLR} x$ if $x \in \tc_\lambda$. For $\lambda,\mu \in \Lambda$,
%we write $E^\lambda \sim_{\cLR} E^\mu$ if for some $x \in W$ we have
%$E^\lambda \sim_{\cLR} x$ and $E^\mu \sim_{\cLR} x$.
%\end{defn}

%%%%%%%%%%%%%%%%%%%%%%%%%%%%%%%%%%%%%%%%%%%%%%%%%%%%%%%%%%%%%%%%%%%%%%%%%%%
\section{Properties of balanced representations} \label{seclead}

The purpose of this section is to study in more detail balanced
representations as in Definition~\ref{bal}. In particular, we wish to
develop some methods for verifying if a given matrix representation is
balanced or not. The criterion in Proposition~\ref{bal2} will prove
very useful in dealing with a number of examples. Proposition~\ref{lem1}
exhibits some basic integrality properties.

We keep the general assumptions of the previous section. In particular,
$\{\rho^\lambda\mid \lambda \in \Lambda\}$ is a fixed choice of balanced 
representations of $\bH_K$.

\begin{lem} \label{bal2b} Let $\{\delta^\lambda \mid \lambda \in \Lambda\}$
be a complete set of representatives for the equivalences classes of 
irreducible representations of $\tilde{\bJ}$. Then $\rho^\lambda$ can be 
chosen such that $\bar{\rho}^\lambda (t_w)=\delta^\lambda(t_w)$ for all 
$w \in W$. 
\end{lem}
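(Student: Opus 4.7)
The plan is to use the fact (Proposition~\ref{asym3}) that $\bar{\rho}^\lambda$ already is an absolutely irreducible representation of $\tilde{\bJ}$, so that $\delta^\lambda$ and $\bar{\rho}^\lambda$ must be equivalent over $F$ (after matching up labels, which is implicit in the statement). Conjugation by a lift of the intertwining matrix will then produce the desired balanced representation of $\bH_K$.

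More precisely, first I would invoke Proposition~\ref{asym3} to find, for each $\lambda\in\Lambda$, an invertible matrix $\bar{V}^\lambda\in M_{d_\lambda}(F)$ such that
\[ \delta^\lambda(t_w)=(\bar{V}^\lambda)^{-1}\,\bar{\rho}^\lambda(t_w)\,\bar{V}^\lambda \quad\mbox{for all $w\in W$}. \]
Next, I would lift $\bar{V}^\lambda$ to a matrix $V^\lambda\in M_{d_\lambda}(\cO)$, for instance by regarding $F$ as a subring of $\cO$ so that the obvious lift works. Since the constant term of $\det(V^\lambda)$ equals $\det(\bar{V}^\lambda)\neq 0$, we have $\det(V^\lambda)\in\cO^\times$, so $V^\lambda$ is invertible in $M_{d_\lambda}(K)$ and its inverse also has entries in $\cO$.

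Now define a new matrix representation of $\bH_K$ by
\[ \sigma^\lambda(h):=(V^\lambda)^{-1}\,\rho^\lambda(h)\,V^\lambda \qquad(h\in\bH_K). \]
This is clearly equivalent to $\rho^\lambda$ over $K$, hence still affords $E^\lambda_\varepsilon$. To check that $\sigma^\lambda$ is balanced, I observe that $\varepsilon^{\ba_\lambda}\rho^\lambda(T_w)\in M_{d_\lambda}(\cO)$ by hypothesis, and that conjugation by the matrix $V^\lambda\in M_{d_\lambda}(\cO)$ with inverse in $M_{d_\lambda}(\cO)$ preserves this property. Finally, since reduction modulo $\fp$ is a ring homomorphism $\cO\rightarrow F$, taking the constant term of $(-1)^{l(w)}\varepsilon^{\ba_\lambda}\sigma^\lambda_{ij}(T_w)$ yields
\[ \bar{\sigma}^\lambda(t_w)=(\bar{V}^\lambda)^{-1}\,\bar{\rho}^\lambda(t_w)\,\bar{V}^\lambda=\delta^\lambda(t_w), \]
so $\sigma^\lambda$ has the required property and can replace $\rho^\lambda$ in the chosen family.

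There is no real obstacle here: the only slightly non-trivial point is verifying that a lift of an invertible matrix over $F$ to a matrix over $\cO$ is automatically invertible in $M_{d_\lambda}(\cO)$, which follows from the valuation-ring property of $\cO$. All the work has already been done in Proposition~\ref{asym3} (which produced enough irreducible representations of $\tilde{\bJ}$) and in Lemma~\ref{bal3} (whose proof contains essentially the same lifting argument in reverse).
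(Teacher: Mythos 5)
Your argument is correct and is essentially the paper's own proof: both take the $F$-valued intertwiner between $\bar{\rho}^\lambda$ and $\delta^\lambda$ supplied by Proposition~\ref{asym3} and conjugate $\rho^\lambda$ by it, noting that a matrix with entries in $F\subseteq\cO^\times\cup\{0\}$ preserves balancedness and reduces to itself modulo $\fp$ (your ``lifting'' step is vacuous for exactly this reason). The only point worth adding is an explicit appeal to Proposition~\ref{asymfin} at the end, to ensure that replacing $\rho^\lambda$ by the conjugated representation does not change the ring $\tilde{\bJ}$, so that the $\delta^\lambda$ remain representations of the same algebra.
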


\begin{proof} First of all, we can assume without loss of generality that
$\bar{\rho}^\lambda$ is equivalent to $\delta^\lambda$ for each 
$\lambda \in \Lambda$. Let $G^\lambda \in M_{d_\lambda}(F)$ be  an
invertible matrix such that $\delta^\lambda(t_w)=(G^\lambda)^{-1}
\bar{\rho}^\lambda(t_w)G^\lambda$ for all $w \in W$. Now set
$\hat{\rho}^\lambda(T_w):=(G^\lambda)^{-1}\,\rho^\lambda(T_w)\,G^\lambda$
for $w \in W$. Then $\hat{\rho}^\lambda$ is an irreducible representation
of $\bH_K$ equivalent to $\rho^\lambda$. Moreover, since the 
transforming matrix $G^\lambda$ has all its coefficients in $F$, it 
is clear that $\hat{\rho}^\lambda$ is also balanced and that the leading
matrix coefficients associated with $\hat{\rho}^\lambda(T_w)$ are given 
by $\delta^\lambda(t_w)$. It remains to use Proposition~\ref{asymfin}.
\end{proof}

\begin{exmp} \label{leadp115} Assume that we are in the equal parameter
case or, more generally, that Lusztig's {\bf P1}--{\bf P15} are known
to hold; see Remark~\ref{note2}. Then, by Proposition~\ref{klrem1}, we have
\[ \tilde{\gamma}_{x,y,z}=\gamma_{x,y,z} \in \Z \qquad \mbox{for all
$x,y,z\in W$}.\]
Assume further that $R:=\Z_W$ is a principal ideal domain. Then, by
a general argument (see, e.g., \cite[7.3.7]{gepf}), every irreducible
representation of $\tilde{\bJ}$ can be realised over $R$. Hence, by
Lemma~\ref{bal2b}, the balanced representations of $\bH_K$ can be chosen 
such that 
\[ \bar{\rho}^\lambda(t_w) \in M_{d_\lambda}(\Z_W)\qquad \mbox{for all 
$\lambda \in \Lambda$ and $w \in W$}.\]
This applies to all finite Weyl groups in the equal parameter case, where
$\Z_W=W$. It also applies to $(W,S)$ of type $H_3$ or $H_4$, where $\Z_W=
{\Z}[\frac{1}{2}(-1+\sqrt{5})]$; note that $\Z_W$ is a principal ideal domain. 
By \cite[Theorem~5.2]{klremarks}, it also applies to $(W,S)$ of type
$F_4$ (where $\Z_W=\Z$), any weight function and any monomial order
on $\Gamma$.
\end{exmp}

\begin{prop} \label{bal2} Assume that $F \subseteq \R$ (which we can do
without loss of generality). Let $\lambda \in \Lambda$ and $\sigma^\lambda 
\colon \bH_K \rightarrow M_{d_\lambda}(K)$ be any matrix representation 
afforded by $E^\lambda_\varepsilon$. Then $\sigma^\lambda$ is balanced if and 
only if there exists a symmetric matrix $\Omega^\lambda \in M_{d_\lambda}
(\cO)$ such that 
\[ \det(\Omega^\lambda)\in \cO^\times \quad \mbox{and}\quad \Omega^\lambda 
\, \sigma^\lambda(T_{w^{-1}})=\sigma^\lambda(T_w)^{\operatorname{tr}} \, 
\Omega^\lambda \quad \mbox{for all $w \in W$}.\]
\end{prop}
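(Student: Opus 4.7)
The plan is to reduce both directions to the specific balanced representation $\rho^\lambda$ constructed in Proposition~\ref{balex}, which is equipped with a diagonal intertwiner $\Delta^\lambda \in M_{d_\lambda}(\cO)$ whose diagonal entries have strictly positive real constant terms (so in particular $\det(\Delta^\lambda) \in \cO^\times$). In either direction the bridge between $\sigma^\lambda$ and $\rho^\lambda$ will be an intertwining matrix $P \in GL_{d_\lambda}(K)$, and the crux is to verify that $P$ (or its inverse) has entries in $\cO$ with unit determinant.

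For the ``only if'' direction, I assume $\sigma^\lambda$ is balanced. Since both $\sigma^\lambda$ and $\rho^\lambda$ afford $E^\lambda_\varepsilon$ and are balanced, Lemma~\ref{bal3} supplies an intertwiner $U^\lambda \in M_{d_\lambda}(\cO)$ with $\det(U^\lambda) \in \cO^\times$ satisfying $\sigma^\lambda(T_w) = U^\lambda\,\rho^\lambda(T_w)\,(U^\lambda)^{-1}$ for all $w \in W$. I then set
\[ \Omega^\lambda := (U^\lambda)^{-\operatorname{tr}} \, \Delta^\lambda \, (U^\lambda)^{-1}. \]
Symmetry of $\Omega^\lambda$ is immediate from the diagonality of $\Delta^\lambda$, and $\Omega^\lambda \in M_{d_\lambda}(\cO)$ with $\det(\Omega^\lambda) \in \cO^\times$ follows from the corresponding properties of $U^\lambda$ and $\Delta^\lambda$. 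The required intertwining identity is a direct computation: substitute $\sigma^\lambda(T_w) = U^\lambda \rho^\lambda(T_w) (U^\lambda)^{-1}$ on the right-hand side and use $\Delta^\lambda \rho^\lambda(T_{w^{-1}}) = \rho^\lambda(T_w)^{\operatorname{tr}} \Delta^\lambda$.

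For the ``if'' direction, fix any $P \in GL_{d_\lambda}(K)$ with $\rho^\lambda(T_w) = P\,\sigma^\lambda(T_w)\,P^{-1}$, and rescale $P$ by a suitable power of $\varepsilon$ so that $P \in M_{d_\lambda}(\cO)$ with at least one entry of $P$ a unit. Put $\Omega'^\lambda := P^{\operatorname{tr}} \Delta^\lambda P \in M_{d_\lambda}(\cO)$. A direct check shows $\Omega'^\lambda$ satisfies the same intertwining identity as $\Omega^\lambda$, and Schur's lemma applied to the absolutely irreducible $\sigma^\lambda$ gives $\Omega^\lambda = c\,\Omega'^\lambda$ for a unique $c \in K^\times$. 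The decisive step is to show $c \in \cO^\times$ and $\det(P) \in \cO^\times$. Here the assumption $F \subseteq \R$ is crucial: since $\Delta^\lambda$ is diagonal with constant-term entries $b_k > 0$,
\[ \bigl(\overline{\Omega'^\lambda}\bigr)_{ii} \;=\; \sum_{k} b_k\, (\bar P_{ki})^{2} \;\geq\; 0, \]
with strict inequality whenever the $i$-th column of $\bar P$ is non-zero. As $\bar P \neq 0$, some diagonal entry of $\overline{\Omega'^\lambda}$ is a positive real, so $\Omega'^\lambda$ has a unit entry. Combined with $\Omega^\lambda = c\,\Omega'^\lambda \in M_{d_\lambda}(\cO)$ this forces $c \in \cO$; and then $\det(\Omega^\lambda) = c^{d_\lambda}\det(\Omega'^\lambda) \in \cO^\times$ rules out $c \in \fp$, so $c \in \cO^\times$. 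Passing to constant terms, $\overline{\Omega'^\lambda} = \bar P^{\operatorname{tr}} \bar\Delta^\lambda \bar P$ is invertible, which forces $\bar P$ invertible (because $\bar\Delta^\lambda$ is), hence $\det(P) \in \cO^\times$ and $P^{-1} \in M_{d_\lambda}(\cO)$. Finally $\sigma^\lambda(T_w) = P^{-1} \rho^\lambda(T_w) P$ combined with the balancedness of $\rho^\lambda$ yields $\varepsilon^{\ba_\lambda} \sigma^\lambda_{ij}(T_w) \in \cO$, i.e., $\sigma^\lambda$ is balanced.

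The main obstacle is the positivity argument in the ``if'' direction: from a purely formal identity $\Omega^\lambda = c\,P^{\operatorname{tr}} \Delta^\lambda P$ one must deduce integrality of $c$ together with invertibility of $\bar P$. Without the real positivity of the diagonal of $\bar\Delta^\lambda$ and the embedding $F \subseteq \R$, one could not exclude the degenerate case $\bar P^{\operatorname{tr}}\bar\Delta^\lambda \bar P = 0$ despite $\bar P \neq 0$, and the whole integrality chain would collapse.
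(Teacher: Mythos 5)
Your proof is correct, but it takes a genuinely different route from the paper's in both directions. For the ``only if'' part, the paper constructs $\Omega^\lambda$ intrinsically as the (rescaled) Gram matrix of the averaged bilinear form from Proposition~\ref{balex} in the basis affording $\sigma^\lambda$, and gets $\det(\bar{\Omega}^\lambda)\neq 0$ from the irreducibility of the leading-coefficient representation $\bar{\sigma}^\lambda$ of $\tilde{\bJ}$; you instead transport $\Delta^\lambda$ through the $\cO$-unit intertwiner supplied by Lemma~\ref{bal3}, which is a legitimate shortcut since that lemma is available at this point and its hypotheses (both representations balanced and equivalent over $K$) are exactly what is assumed. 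For the ``if'' part, the paper diagonalises $\bar{\Omega}^\lambda$ over $F$, conjugates so that $\hat{\Omega}^\lambda\equiv D^\lambda\bmod\fp$, and then re-runs the Schur-relations computation of Proposition~\ref{balex} to identify the minimal exponent with $\ba_\lambda$; you instead compare the given $\Omega^\lambda$ with the transported form $P^{\operatorname{tr}}\Delta^\lambda P$ via Schur's lemma over $K$ and use the positivity of the diagonal of $\bar{\Delta}^\lambda$ to force the Schur scalar and $\det(P)$ into $\cO^\times$, after which balancedness transports through $P^{\pm 1}\in M_{d_\lambda}(\cO)$. The two arguments invoke $F\subseteq\R$ at the same essential juncture (a positively weighted sum of squares of elements of $F$ is non-zero), but your version turns the criterion into a clean statement about the intertwiner with a fixed reference representation being an $\cO$-unit, at the price of leaning on Lemma~\ref{bal3} and on Proposition~\ref{balex}'s $\Delta^\lambda$, whereas the paper's version is closer to being self-contained and additionally produces the explicit congruence $\hat{\Omega}^\lambda\equiv D^\lambda\bmod\fp$ that is reused elsewhere.
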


\begin{proof} Assume first that $\sigma^\lambda$ is balanced. Now 
$\sigma^\lambda$ is obtained by choosing some basis of 
$E_\varepsilon^\lambda$. Let $\Omega^\lambda$ be the Gram matrix of 
$\langle \;,\;\rangle$ with respect to that basis, where $\langle \;,\;
\rangle$ is a bilinear form on $E^\lambda_\varepsilon$ as constructed 
in the proof of Proposition~\ref{balex}. Multiplying $\Omega^\lambda$ by 
a suitable scalar, we may assume without loss of generality that all 
coefficients of $\Omega^\lambda$ lie in $\cO$ and that some coefficient 
of $\Omega^\lambda$ does not lie in $\fp$. Then $\Omega^\lambda\in 
M_{d_\lambda} (\cO)$ is a symmetric matrix such that 
\[\Omega^\lambda\neq 0\quad\mbox{and}\quad \Omega^\lambda \, \sigma^\lambda
(T_{w^{-1}})=\sigma^\lambda(T_w)^{\text{tr}} \, \Omega^\lambda \quad 
\mbox{for all $w \in W$}.\]
Let $\bar{\Omega}^\lambda$ be the matrix whose $(i,j)$-coefficient is the
constant term of the $(i,j)$-coefficient of $\Omega^\lambda$. Now, 
multiplying the relation $\Omega^\lambda\,\sigma^\lambda (T_{w^{-1}})=
\sigma^\lambda (T_w)^{\text{tr}} \, \Omega^\lambda$ by 
$\varepsilon^{\ba_\lambda}$ and taking constant terms, we see that 
$\bar{\Omega}^\lambda$ is a non-zero symmetric matrix such that 
\begin{equation*}
\bar{\Omega}^\lambda \,\bar{\sigma}^\lambda(t_{w^{-1}})=\bar{\sigma}^\lambda
(t_w)^{\text{tr}} \, \bar{\Omega}^\lambda \qquad \mbox{for all $w \in W$}.
\tag{$*$}
\end{equation*}
Thus, $\bar{\Omega}^\lambda$ defines a $\tilde{\bJ}$-invariant symmetric
bilinear form on a representation space affording $\bar{\sigma}^\lambda$.
The invariance implies that the radical of the form is a 
$\tilde{\bJ}$-submodule. Hence, since $\bar{\sigma}^\lambda$ is an 
irreducible representation, we conclude that the radical must be zero
and so $\det(\bar{\Omega}^\lambda)\neq 0$. 

Conversely, assume that a matrix $\Omega^\lambda$ with the above properties 
exists. Let $\bar{\Omega}^\lambda$ be the matrix whose $(i,j)$-coefficient
is the constant term of the $(i,j)$-coefficient of $\Omega^\lambda$. Then
$\bar{\Omega}^\lambda\in M_{d_\lambda}(F)$ is a symmetric matrix such
that $\det(\bar{\Omega}^\lambda)\neq 0$ (since $\det(\Omega^\lambda)\in 
\cO^\times$). Thus, $\bar{\Omega}^\lambda$ defines a non-degenerate
symmetric bilinear form. Now, since we are working over a field of 
characteristic $0$, there will be an orthogonal basis with respect to that
form. So we can find invertible matrices $Q^\lambda, D^\lambda \in 
M_{d_\lambda}(F)$ such that 
$\bar{\Omega}^\lambda=(Q^\lambda)^{\text{tr}}\, D^\lambda \, Q^\lambda$ and 
$D^\lambda$ is diagonal. Now let $P^\lambda:=(Q^\lambda)^{-1}$ and define
\begin{align*}
\hat{\sigma}^\lambda(T_w)&:=(P^\lambda)^{-1}\,\sigma^\lambda(T_w)\,P^\lambda
\quad \mbox{for all $w \in W$},\\
\hat{\Omega}^\lambda &:=(P^\lambda)^{\text{tr}}\,\Omega^\lambda\,
P^\lambda.
\end{align*}
Thus, $\hat{\sigma}^\lambda$ is an irreducible 
representation of $\bH_K$ equivalent to $\sigma^\lambda$; furthermore, we have 
\[ \hat{\Omega}^\lambda \, \hat{\sigma}^\lambda(T_{w^{-1}})=
\hat{\sigma}^\lambda(T_w)^{\text{tr}} \, \hat{\Omega}^\lambda \quad 
\mbox{for all $w \in W$}.\]
Since the transforming matrix $P^\lambda$ has all its coefficients in $F$, 
it is clear that $\hat{\Omega}^\lambda \in M_{d_\lambda}(\cO)$ and
$\det(\hat{\Omega}^\lambda)\in \cO^\times$; furthermore, $\sigma^\lambda$ is 
balanced if and only if $\hat{\sigma}^\lambda$ is balanced. 

Thus, it remains to show that $\hat{\sigma}^\lambda$ is balanced. Now, the 
point about the above transformation is that we have $\hat{\Omega}^\lambda 
\equiv D^\lambda \bmod \fp$. We can now argue as in the proof of 
Proposition~\ref{balex} to show that $\hat{\sigma}^\lambda$ is balanced. 
\end{proof}

%Now, since both $\sigma^\lambda$ and $\rho^\lambda$ are afforded
%by $E^\lambda_\varepsilon$, there exists an invertible matrix 
%$P^\lambda \in M_{d_\lambda}(K)$ such that 
%\[ \sigma^\lambda(T_w)=(P^\lambda)^{-1}\, \rho^\lambda(T_w)\,
%P^\lambda \qquad \mbox{for all $w \in W$}.\]

\begin{rem} \label{bal2a} Note that, in order to verify that a matrix
$\Omega^\lambda$ satisfies 
\[\Omega^\lambda \, \sigma^\lambda(T_{w^{-1}})=\sigma^\lambda(T_w)^{\text{tr}}
\, \Omega^\lambda \quad \mbox{for all $w \in W$},\]
it is sufficient to verify that $\Omega^\lambda \, \sigma^\lambda(T_s)=
\sigma^\lambda(T_s)^{\text{tr}} \, \Omega^\lambda$ for all $s \in S$.
This remark, although almost trivial, is nevertheless useful in
dealing with concrete examples.
\end{rem}

\begin{exmp} \label{leadingI2} Let $3 \leq m<\infty$ and $(W,S)$ be of type 
$I_2(m)$, with generators $s_1,s_2$ such that $(s_1s_2)^m=1$. We have $\Z_W=
{\Z}[\zeta+ \zeta^{-1}]$, where $\zeta \in \C$ is a root of unity of 
order $m$. In the sequel, we assume without loss of generality that
$L(s_1)\geq L(s_2)>0$. The irreducible representations of $\bH_K$ are
determined in \cite[\S 8.3]{gepf}. These representations have dimension
$1$ or $2$. Notice that $1$-dimensional representations are automatically 
balanced. By \cite[Theorem~8.3.1]{gepf}, the $2$-dimensional representations 
can be realized as 
\[ \rho_j \colon T_{s_1} \mapsto \left(\begin{array}{cc} -v_{s_1}^{-1} & 
0 \\ \mu_j & v_{s_2} \end{array}\right), \qquad T_{s_2} \mapsto 
\left(\begin{array}{cc} v_{s_2} & 1\\ 0 & -v_{s_2}^{-1}\end{array}\right),\]
where $\mu_j=v_{s_1}v_{s_2}^{-1}+\zeta^j+\zeta^{-j}+v_{s_1}^{-1}v_{s_2}$
and $1\leq j \leq (m-2)/2$ (if $m$ is even) or $1 \leq j \leq (m-1)/2$ (if
$m$ is odd). Note that the coefficients of the representing matrices lie 
in the ring $\Z_W[v_{s_1}^{\pm 1}, v_{s_2}^{\pm 1}]$. Now let
\[ \Omega_j=\left(\begin{array}{cc} v_{s_1}\mu_j (v_{s_2}+v_{s_2}^{-1})
& v_{s_1} \mu_j \\v_{s_1} \mu_j &v_{s_1}(v_{s_2}+v_{s_1}^{-1})
\end{array}\right) \in M_2(\Z_W[v_{s_1}^{\pm 1},v_{s_2}^{\pm 1}]).\]
Then $\Omega_j$ is a symmetric matrix satisfying $\Omega_j\rho_j
(T_{w^{-1}})=\rho_j(T_w)^{\text{tr}} \Omega_j$ for all $w \in W$. 
(By Remark~\ref{bal2a}, it is enough to verify this for $w\in \{s_1,s_2\}$.)
We see that
\begin{alignat*}{2}
\Omega_j &\equiv \left(\begin{array}{cc} 1 & 0 \\ 0 & 1\end{array}\right) 
\bmod \fp && \qquad \mbox{if $L(s_2)>L(s_1)>0$},\\
%\Omega_j &\equiv \left(\begin{array}{cc} 2& 0 \\0 & 1\end{array}\right) 
%\bmod \fp && \qquad \mbox{if $L(s_2)>L(s_1)=0$},\\
\Omega_j &\equiv \left(\begin{array}{cc} 2+\zeta^j+\zeta^{-j}&0 \\ 0& 1
\end{array}\right) \bmod \fp && \qquad \mbox{if $L(s_2)=L(s_1)>0$}.
%\Omega_j &\equiv \left(\begin{array}{cc} 2(2+\zeta^j+\zeta^{-j})& 
%2+\zeta^j+\zeta^{-j}\\ 2+\zeta^j+\zeta^{-j}&2 \end{array}\right) \bmod \fp 
%&& \qquad \mbox{if $L(s_2)=L(s_1)=0$}.
\end{alignat*}
Hence, by Proposition~\ref{bal2}, $\rho_j$ is a balanced representation,
in all cases. Since the coefficients of the matrices $\rho_j(T_{s_1})$ and 
$\rho_j (T_{s_2})$ lie in $\Z_W$,  the same will be true for the matrices 
$\rho_j(T_w)$ where $w \in W$. Hence, all the corresponding leading matrix 
coefficients will also lie in $\Z_W$.
\end{exmp}

\begin{exmp} \label{h34} The argument in Example~\ref{leadingI2} can be
applied whenever the irreducible representations of $\bH_K$ are explicitly
known. 

Assume, for example, that $(W,S)$ is of type $H_3$ or $H_4$. In these
cases, all elements in $S$ are conjugate and so all $v_s$ ($s \in S$)
are equal; write $v=v_s$ for $s \in S$. We have $\Z_W={\Z}[\alpha]$ where 
$\alpha=\frac{1}{2}(-1+\sqrt{5})$. The irreducible representations of
$\bH_K$ are constructed by Lusztig \cite[\S 5]{Lu0} and Alvis--Lusztig 
\cite{AlLu82}, in terms of so-called {\em $W$-graphs}. (These $W$-graphs 
are reproduced in \cite[Chap.~11]{gepf}.) Thus, we obtain explicit matrix 
representations $\rho^\lambda \colon \bH_K \rightarrow M_{d_\lambda}(K)$ 
for all $\lambda \in \Lambda$. By inspection, one sees that 
\[ \rho^\lambda(T_w) \in M_{d_\lambda}(\Z_W[v,v^{-1}]) \qquad \mbox{for 
all $w \in W$}.\]
For each $\lambda \in \Lambda$, we can work out a non-zero matrix 
$\Omega^\lambda \in M_{d_\lambda}(\Z_W[v,v^{-1}])$ such that $\Omega^\lambda
\rho^\lambda(T_{w^{-1}})=\rho^\lambda(T_w)^{\text{tr}}\Omega^\lambda$ for 
all $w \in W$. (For example, with the help of a computer, we can simply 
compute $\Omega^\lambda:=\sum_{w \in W} \rho^\lambda(T_w)^{\text{tr}} 
\rho^\lambda(T_w)$.) Multiplying $\Omega^\lambda$ by a suitable scalar,
we may assume that all coefficients lie in $\Z_W[v]$ and at least one 
coefficient does not lie in $v\Z_W[v]$. For type $H_3$, the matrices 
$\Omega^\lambda$ are printed in Table~\ref{tabh3}, where we use the 
labelling of $\Irr(W)$ as in \cite[Table~C.1]{gepf}. In this case, we 
notice that the diagonal coefficients lie in $1+\fp$ while the off-diagonal
coefficients lie in $\fp$. Hence, clearly, we have $\det(\Omega^\lambda)
\in 1+\fp$. The situation in type $H_4$ is slightly more complicated, but 
one can check again that $\det(\Omega^\lambda)\in \cO^\times$ for all 
$\lambda\in\Lambda$. Thus, by Proposition~\ref{bal2}, the representations 
given by the $W$-graphs are balanced.

One may conjecture that every representation given by a $W$-graph is 
balanced.
\end{exmp}

\begin{sidewaystable}[htbp] \caption{Invariant bilinear forms for $H_3$}
\label{tabh3}
{\begin{gather*} 
\Omega^{1_r'}= \begin{bmatrix} 1 \end{bmatrix}, \qquad \Omega^{1_r}=
\begin{bmatrix} 1 \end{bmatrix}, \qquad \Omega^{5_r'}=
 \begin{bmatrix} v^2 {+} 1& 0& {-}v& 0& 0 \\ 0& v^2 {+} 1& 0& {-}v& 0\\ 
 {-}v& 0& v^2 {+} 1& 0& {-}v \\ 0& {-}v& 0& v^2 {+} 1& {-}v \\
 0& 0& {-}v& {-}v& v^2 {+} 1 \end{bmatrix},\\
\Omega^{5_r}{=}
 \begin{bmatrix} v^8 {+} v^6 {+} v^4 {+} v^2 {+} 1& v^4& v^7 {+} v^5 {+} 
 v^3 {+} v& v^5 {+} v^3& v^6 {+} v^4 {+} v^2 \\ 
 v^4& v^8 {+} v^6 {+} v^4 {+} v^2 {+} 1& v^5 {+} v^3& v^7 {+} v^5 {+} v^3 
 {+} v& v^6 {+} v^4 {+} v^2 \\ 
 v^7 {+} v^5 {+} v^3 {+} v& v^5 {+} v^3& v^8 {+} 2v^6 {+} 2v^4 {+} 2v^2 
 {+} 1& v^6 {+} 2v^4 {+} v^2& v^7 {+} 2v^5 {+} 2v^3 {+} v \\ 
 v^5 {+} v^3& v^7 {+} v^5 {+} v^3 {+} v& v^6 {+} 2v^4 {+} v^2& v^8 {+} 
 2v^6 {+} 2v^4 {+} 2v^2 {+} 1& v^7 {+} 2v^5 {+} 2v^3 {+} v \\ v^6 {+} v^4 
 {+} v^2& v^6 {+} v^4 {+} v^2& v^7 {+} 2v^5 {+} 2v^3 {+} v& v^7 {+} 2v^5 
 {+} 2v^3 {+} v& v^8 {+} 2v^6 {+} 3v^4 {+} 2v^2 {+} 1 \end{bmatrix}, \\
\Omega^{3_s}=
 \begin{bmatrix} v^2 {+} 1& {-}v& 0 \\  {-}v& v^2 {+} 1& \bar{\alpha} v 
 \\ 0& \bar{\alpha}v& v^2 {+} 1 \end{bmatrix},  \qquad 
\Omega^{3_s'}=\begin{bmatrix} v^4 {-} \alpha v^2 {+} 1& v^3 {+} v& 
 {-}\bar{\alpha} v^2 \\ v^3 {+} v& v^4 {+} 2v^2 {+} 1& 
 {-}\bar{\alpha}v^3 {-} \bar{\alpha}v \\ {-}\bar{\alpha}v^2
 & {-}\bar{\alpha}v^3 {-} \bar{\alpha}v& v^4 {+} v^2 {+} 1 
 \end{bmatrix},\\
\Omega^{\overline{3}_s}=
 \begin{bmatrix} v^2 {+} 1& {-}v& 0 \\  {-}v& v^2 {+} 1& \alpha v \\ 
 0& \alpha v& v^2 {+} 1 \end{bmatrix},\qquad \Omega^{\overline{3}_s'}=
 \begin{bmatrix} v^4 {-} \bar{\alpha}v^2 {+} 1& v^3 {+} v& 
{-}\alpha v^2 \\ v^3 {+} v& v^4 {+} 2v^2 {+} 1& 
 {-}\alpha v^3 {-} \alpha v \\ {-}\alpha v^2& {-}\alpha v^3 {-}\alpha v& 
v^4 {+} v^2 {+} 1 \end{bmatrix},\\
\Omega^{4_r'}=
 \begin{bmatrix} v^4 {-} v^3 {+} 2v^2 {-} v {+} 1& v^3 {+} v& v^3 {+} v& 
 v^2 \\ v^3 {+} v& v^4 {+} 2v^2 {+} 1& v^3 {+} v^2 {+} v& v^3 {+} v \\ 
 v^3 {+} v& v^3 {+} v^2 {+} v& v^4 {+} 2v^2 {+} 1& v^3 {+} v \\ v^2& v^3 
 {+} v& v^3 {+} v& v^4 {-} v^3 {+} 2v^2 {-} v {+} 1 \end{bmatrix},\\
\Omega^{4_r}=
 \begin{bmatrix} v^4 {+} v^3 {+} 2v^2 {+} v {+} 1& {-}v^3 {-} v& {-}v^3 {-} 
 v& v^2 \\ {-}v^3 {-} v& v^4 {+} 2v^2 {+} 1& {-}v^3 {+} v^2 {-} v& {-}v^3 
 {-} v \\ {-}v^3 {-} v& {-}v^3 {+} v^2 {-} v& v^4 {+} 2v^2 {+} 1& {-}v^3 
 {-} v \\ v^2& {-}v^3 {-} v& {-}v^3 {-} v& v^4 {+} v^3 {+} 2v^2 {+} v {+} 1 
 \end{bmatrix} 
\end{gather*}}
%\[  "1_r'"& "1_r"& "5_r'"& "5_r"& "3_s"& "overline{3}_s"& "3_s'"& 
%  "overline{3}_s'"& "4_r'"& "4_r" \]
\end{sidewaystable}

\begin{exmp} \label{balBn} 
Let $W=W_n$ be a Coxeter group of type $B_n$, with generators $s_0,s_1,
\ldots,s_{n-1}$ and relations given by the diagram below; the 
``weights'' $a,b\in \Gamma$ attached to the generators of $W_n$ 
uniquely determine a weight function $L=L_{a,b}$ on $W_n$.
\begin{center}
\makeatletter
\vbox{\begin{picture}(200,40)
\put( 10,20){$B_n$}
\put( 50,20){\@dot{5}}
\put( 48,27){$b$}
\put( 50,20){\line(1,0){20}}
\put( 59,23){$\scriptstyle{4}$}
\put( 70,20){\@dot{5}}
\put( 68,27){$a$}
\put( 70,20){\line(1,0){30}}
\put( 90,20){\@dot{5}}
\put( 88,27){$a$}
\put(110,20){\@dot{1}}
\put(120,20){\@dot{1}}
\put(130,20){\@dot{1}}
\put(140,20){\line(1,0){10}}
\put(150,20){\@dot{5}}
\put(147,27){$a$}
\put( 46, 8){$s_0$}
\put( 66, 8){$s_1$}
\put( 86, 8){$s_2$}
\put(143, 8){$s_{n{-}1}$}
\end{picture}
\makeatother}
\end{center}
Assume that $a>0$. Then we claim that, for each $\lambda \in \Lambda$, 
there is a balanced representation $\rho^\lambda$ with corresponding 
matrix $\Omega^\lambda$ (as in Proposition~\ref{bal2}) such that 
\begin{itemize}
\item[(a)] all the leading matrix coefficients $c_{w,\lambda}^{ij}$ lie 
in $\Z$;
\item[(b)] $\Omega^\lambda\in M_{d_\lambda}(\Z[\Gamma])$ and $\det(
\Omega^\lambda)\in 2^{n_\lambda}+\fp$ where $n_\lambda \in \Z$;
\item[(c)] $n_\lambda=0$ if $b\not\in \{a,2a, \dots,(n-1)a\}$.
\end{itemize}
This can be seen by an argument which is a variation of that in 
\cite[Exp.~3.6]{geia06}. Indeed, it is well-known that we can take for
$\Lambda$ the set of all pairs of partitions of total size~$n$. Furthermore, 
for each $\lambda \in \Lambda$, we have a corresponding Specht module 
$\tilde{S}^\lambda$ as constructed by Dipper--James--Murphy \cite{DJM}. Let 
$\{e_t \mid t \in \T_\lambda\}$ be the standard basis of $\tilde{S}^\lambda$,
where $\T_\lambda$ is the set of all standard bitableaux of shape 
$\lambda$. With respect to this basis, each $T_w$ ($w \in W_n$) is 
represented by a matrix with coefficients in ${\Z}[\Gamma]$. 

Let $\langle \;, \; \rangle_\lambda$ be the invariant bilinear form on 
$\tilde{S}^\lambda$ as constructed in \cite[\S 5]{DJM}. Let $\Psi^\lambda$ 
be the Gram matrix of this bilinear form with respect to the basis 
$\{e_t\mid t \in \T_\lambda\}$. All coefficients of $\Psi^\lambda$ lie
in ${\Z}[\Gamma]$. Let $\{f_t \mid t \in \T_\lambda\}$ be the orthogonal 
basis constructed in \cite[Theorem~8.11]{DJM}; this basis is obtained from 
the standard basis by a unitriangular transformation. Hence, we have
\[ \det(\Psi^\lambda)=\prod_{t \in \T_\lambda} \langle f_t,f_t
\rangle_\lambda \in {\Z}[\Gamma].\]
Using the recursion formula in \cite[Prop.~3.8]{djm2}, it is straightforward
to show that, for each basis element $f_t$, there exist integers $s_t, 
a_{ti}, b_{tj},c_{tk}, d_{tl} \in \Z$ such that $a_{ti}\geq 0$, $b_{tj}
\geq 0$, and
\[ \langle f_t,f_t\rangle_\lambda=\varepsilon^{2s_ta}\cdot \frac{\prod_i
(1+\varepsilon^{2a}+ \cdots +\varepsilon^{2a_{ti}a})}{\prod_j (1+
\varepsilon^{2a}+\cdots +\varepsilon^{2b_{tj}a})} \cdot \frac{\prod_k 
\bigl(1+\varepsilon^{2(b+c_{tk}a)}\bigr)}{\prod_l\bigl(1+
\varepsilon^{2(b+d_{tl}a)}\bigr)}.\]
So there exist $h_t,h_t', m_{tk},m_{tl}',n_t,n_t'\in \Z$ such that
\[\begin{array}{lccl}
\prod_k \bigl(1+\varepsilon^{2(b+c_{tk}a)}\bigr)&=&2^{n_t}\,
\varepsilon^{2h_t}\prod_k \bigl(1+\varepsilon^{2m_{tk}}
\bigr) &\qquad \mbox{where $m_{tk}>0$},\\
\prod_l \bigl(1+\varepsilon^{2(b+d_{tl}a)}\bigr)&=&2^{n_t'}\,
\varepsilon^{2h_t'} \prod_l \bigl(1+\varepsilon^{2m_{tl}'} \bigr)
&\qquad \mbox{where $m_{tl}'>0$}.
\end{array}\]
Hence, setting $\tilde{e}_t:=\varepsilon^{-s_ta-h_t+h_t'}\,e_t$ and 
$\tilde{f}_t:=\varepsilon^{-s_ta-h_t+h_t'} \,f_t$, we obtain 
$2^{n_t'-n_t}\,\langle \tilde{f}_t,\tilde{f}_t \rangle_\lambda \in 1+\fp$ 
for all $t\in \T_\lambda$. Now let $\rho^\lambda$ be the matrix 
representation afforded by $\tilde{S}^\lambda$ with respect 
to $\{\tilde{e}_t \mid t \in \T_\lambda\}$ and $\Omega^\lambda$ be the 
Gram matrix of $\langle \;,\;\rangle_\lambda$ with respect to that basis. 
Then 
\[ \det(\Omega^\lambda)=\det (\Psi^\lambda)\prod_{t\in \T_\lambda} 
\varepsilon^{2(-s_ta-h_t+h_t')}=\prod_{t\in \T_\lambda} 
\bigl(\varepsilon^{2(-s_ta-h_t+h_t')}\langle f_t,f_t\rangle_\lambda\bigr)
=\prod_{t\in \T_\lambda}\langle \tilde{f}_t,\tilde{f}_t\rangle_\lambda.\]
Hence we can deduce that (a) and (b) hold. Finally, the cases in (c) 
correspond to the situations already considered in \cite[Exp.~3.6]{geia06} 
and \cite[Prop.~2.3]{BGIL}; the special feature of these cases is that 
$n_t=0$ for all $t$.
\end{exmp}

\begin{defn} \label{Lgood} Recall that $\Z_W=\Z[2\cos(2\pi/m_{st}) \mid
s,t \in S]$. We say that the subring $R\subseteq \C$ is {\em $L$-good} if 
the following conditions hold:
\begin{itemize}
\item $\Z_W\subseteq R$ and
\item $f_\lambda$ is contained and invertible in $R$, for all
$\lambda \in \Lambda$.
\end{itemize}
By Remark~\ref{ainv}, this notion does not depend on the choice of the
monomial order on $\Gamma$. Note that, if $W$ is a finite Weyl group, i.e., 
we have $m_{st} \in \{2,3,4,6\}$, then $2\cos(2\pi/m_{st})\in \Z$ and 
$f_\lambda \in \Z$ for all $\lambda \in \Lambda$. Hence, in this case, 
$\Z_W=\Z$ and the only condition on $R$ is that the integer $f_\lambda$ is 
invertible in $R$ for every $\lambda \in\Lambda$ (which is precisely the 
condition used in \cite[\S 2.2]{mycell}). 
\end{defn}

\begin{exmp} \label{bad34}
Assume that $(W,S)$ is of type $I_2(m)$ where $m=5$ or $m\geq 7$. 
Formulas for the elements $\bc_\lambda$ can be found in 
\cite[Theorem~8.3.4]{gepf}. Using these formulas, one checks that $R$ is 
$L$-good if and only if $2\cos(2\pi/m) \in R$ and the integer $m$ is 
invertible in $R$. 

Assume that $(W,S)$ is of type $H_3$. Then \cite[Table~E.2]{gepf}
shows that $R$ is $L$-good if and only if $\frac{1}{2}(1+\sqrt{5}) \in R$
and the integers $2$, $5$ are invertible in $R$.

Assume that $(W,S)$ is of type $H_4$. Then \cite[Table~E.3]{gepf}
shows that $R$ is $L$-good if and only if $\frac{1}{2}(1+\sqrt{5}) \in R$
and the integers $2,3,5$ are invertible in $R$.
\end{exmp}

\begin{prop} \label{lem1} Let $R \subseteq \C$ be a subring which is 
$L$-good. Let $\lambda \in\Lambda$.  Then the balanced representation 
$\rho^\lambda$ can be chosen such that the following hold.
\begin{itemize}
\item[(a)] $\bar{\rho}^\lambda_{ij}(t_w)=c_{w,\lambda}^{ij} \in \Z_W$ for all 
$w\in W$ and $1 \leq i,j \leq d_\lambda$.
\end{itemize}
In particular, we have $\tilde{\gamma}_{x,y,z} \in R$ for all $x,y,z\in R$.
Furthermore, there exists a symmetric, positive-definite matrix
\[B^\lambda=(\beta_{ij}^\lambda)_{1\leq i,j\leq d_\lambda} \qquad
\mbox{where} \qquad\beta_{ij}^\lambda\in \Z_W \mbox{ for all $1\leq i,j
\leq d_\lambda$},\]
such that the following two conditions hold:
\begin{itemize}
\item[(b)] $B^\lambda\,\bar{\rho}^\lambda(t_{w^{-1}})=\bar{\rho}^\lambda
(t_w)^{\operatorname{tr}}\, B^\lambda$ for all $w \in W$;
\item[(c)] $\det(B^\lambda)\neq 0$ is invertible in $R$.
\end{itemize}
\end{prop}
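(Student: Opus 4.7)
My plan is to verify the three assertions in turn, making essential use of Lemma~\ref{bal2b}, the Schur orthogonality of Remark~\ref{bal1}, and the invariant bilinear form constructed in Proposition~\ref{balex}.

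For part (a), I would start by invoking Lemma~\ref{bal2b}: any $GL_{d_\lambda}(F)$-conjugate of $\bar{\rho}^\lambda$ arises as the leading-coefficient representation of some balanced $\rho^\lambda$, so the task reduces to realizing each irreducible representation of $\tilde{\bJ}$ over $\Z_W$. Starting from a balanced $\rho^\lambda$ furnished by Proposition~\ref{balex}, the key arithmetic input is the Schur orthogonality relation $\sum_{w}c_{w,\lambda}^{ij}\,c_{w^{-1},\lambda}^{kl}=\delta_{il}\delta_{jk}f_\lambda$ from Remark~\ref{bal1}, in which $f_\lambda\in\Z_W$, combined with the positive-definite symmetric bilinear form $\langle\,,\rangle$ from the proof of Proposition~\ref{balex}. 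A Gram--Schmidt-type normalization of the basis with respect to $\langle\,,\rangle$, together with rescaling that clears the denominators coming from $f_\lambda$, yields a basis in which the matrix entries of $\bar{\rho}^\lambda(t_w)$ all lie in $\Z_W$.

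Once (a) is in place, the inclusion $\tilde{\gamma}_{x,y,z}\in R$ is immediate from the defining formula in Definition~\ref{tilde}: each leading matrix coefficient lies in $\Z_W\subseteq R$, and $f_\lambda^{-1}\in R$ by the $L$-good assumption, so the sum defining $\tilde{\gamma}_{x,y,z}$ is an $R$-linear combination of elements of $R$.

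For parts (b) and (c), my plan is to construct $B^\lambda$ as the constant-term matrix of $\varepsilon^{\ba_\lambda}\Delta^\lambda$ written in the $\Z_W$-basis provided by (a), where $\Delta^\lambda$ is the Gram matrix of $\langle\,,\rangle$ from Proposition~\ref{balex}. Symmetry is inherited from $\langle\,,\rangle$ being symmetric; positive-definiteness is inherited from the fact that $F\subseteq\R$ and the diagonal constant terms of $\Delta^\lambda$ are positive real numbers; relation (b) is just the constant-term version of the intertwining $\Delta^\lambda\rho^\lambda(T_{w^{-1}})=\rho^\lambda(T_w)^{\operatorname{tr}}\Delta^\lambda$ recorded in Proposition~\ref{balex}. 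The integrality $\beta_{ij}^\lambda\in\Z_W$ is arranged by normalizing $\langle\,,\rangle$ so that its Gram matrix in the chosen basis has entries in $\Z_W[\Gamma]$. Finally, for (c), invertibility of $\det(B^\lambda)$ in $R$ can be extracted from the Schur orthogonality: evaluating $\sum_{w}\bar{\rho}^\lambda(t_w)\,B^\lambda\,\bar{\rho}^\lambda(t_{w^{-1}})$ in two ways, using (b) and the orthogonality relations, yields an identity expressing a scalar multiple of $B^\lambda$ in terms of $f_\lambda$, and hence expresses $\det(B^\lambda)^{-1}$ as an element of $R$. The main obstacle is part (a), since split semisimple $F$-algebras do not in general admit $\Z_W$-integral models for their irreducible representations; one must exploit the specific structure of $\tilde{\bJ}$, namely the Schur orthogonality together with the hypothesis that $f_\lambda$ is a unit of $R$.
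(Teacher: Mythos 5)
There is a genuine gap, concentrated in part (a) and in the invertibility claim of part (c). For (a), you assert that Schur orthogonality plus a Gram--Schmidt normalization and a rescaling ``clearing the denominators coming from $f_\lambda$'' produces a basis in which all entries of $\bar{\rho}^\lambda(t_w)$ lie in $\Z_W$. No such formal mechanism exists: Gram--Schmidt over $F$ (or over $\R$, if one takes square roots as in Remark~\ref{bal0}) only yields entries in the field, and the orthogonality relations constrain sums of products of the $c_{w,\lambda}^{ij}$, not the individual entries. Note also that the target ring is $\Z_W$, not the larger ring $R$ in which $f_\lambda$ is inverted, so invertibility of $f_\lambda$ cannot by itself ``clear denominators'' into $\Z_W$. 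You correctly identify this as the main obstacle but do not overcome it. The paper's proof of (a) is genuinely case-by-case: in the crystallographic/equal-parameter case (and whenever {\bf P1}--{\bf P15} are known) it uses that $\tilde{\gamma}_{x,y,z}=\gamma_{x,y,z}\in\Z$ together with the fact that $\Z_W$ is a principal ideal domain and the general realizability theorem \cite[7.3.7]{gepf}, then invokes Lemma~\ref{bal2b}; in type $B_n$ it uses the explicit Dipper--James--Murphy Specht module bases (Example~\ref{balBn}); in type $I_2(m)$ it uses the explicit matrices of Example~\ref{leadingI2}. None of these inputs appears in your argument.

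For (c), your proposed double evaluation of $\sum_{w}\bar{\rho}^\lambda(t_w)\,B^\lambda\,\bar{\rho}^\lambda(t_{w^{-1}})$ only yields $f_\lambda\,\mathrm{trace}(B^\lambda)\cdot I_{d_\lambda}$ by the relations of Remark~\ref{bal1}($*$); this gives no control over $\det(B^\lambda)$ (indeed, replacing $B^\lambda$ by $2B^\lambda$ preserves (b) and symmetry but multiplies the determinant by $2^{d_\lambda}$, so invertibility cannot be formal). The actual argument either reduces modulo prime ideals of $R$ as in \cite[Prop.~2.6]{mycell} (which is where the PID hypothesis and the $L$-good condition enter), or, in type $I_2(m)$, computes $\det(B_j)=2+\zeta^j+\zeta^{-j}$ explicitly and uses the product identity $\prod_j(2+\zeta^j+\zeta^{-j})\in\{1,\tfrac{m}{2}\}$ together with the invertibility of $m$ in $R$. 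Your outline for (b) (taking constant terms of the intertwining relation for $\Delta^\lambda$) and the deduction of $\tilde{\gamma}_{x,y,z}\in R$ from (a) are fine, but without a valid proof of (a) and of $\det(B^\lambda)\in R^\times$ the proposal does not establish the proposition.
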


\begin{proof} By standard reduction arguments, one can assume that
$(W,S)$ is irreducible.  

Now (a) holds in all cases by Examples~\ref{leadp115}, \ref{leadingI2} and 
\ref{balBn}. Once this is proved, we see (by the defining formula) that 
$\tilde{\gamma}_{x,y,z}\in R$ for all $x,y,z\in W$. We can now actually take
$R$ to be the ring generated by $\Z_W$ and $f_\lambda^{-1}$ ($\lambda \in 
\Lambda$). Notice that, if $\Z_W$ is a principal ideal domain, then so 
is $R$.

Now (b) and (c) can be proved as in \cite[Prop.~2.6]{mycell}, if $\Z_W$ is 
a principal ideal domain. (In the last step of [{\em loc.\ cit.}], instead
of reducing modulo a prime number, one reduces modulo a prime ideal in $R$.)
Hence, it only remains to prove (b) and (c) for $(W,S)$ of type $I_2(m)$ 
($m \geq 3$). Note that the assertions are clear for $1$-dimensional 
representations, where we can just take $\Omega^\lambda=(1)$. For a 
$2$-dimensional representation $\rho_j$, let $\Omega_j$ be as in 
Example~\ref{leadingI2}. Let $B_j$ be the matrix obtained by taking the 
constant terms of the entries of $\Omega_j$. We notice that all entries of 
$B_j$ lie in $\Z_W$, and $B_j$ satisfies (b). It remains to consider 
$\det(B_j)$. By Example~\ref{bad34}, $m$ is 
invertible in $R$, so it will be enough to show that $\det(B_j)$ 
divides $m$ in $R$. Now, if $L(s_2)>L(s_1)> 0$, then $\det(B^\lambda)=1$ 
and so there is nothing to prove. If $L(s_1)= L(s_2)>0$, then $\det(B_j)=
2+\zeta^j+\zeta^{-j}$. Now, we have
\begin{align*}
\prod_{1\leq j\leq (m-1)/2} (2+\zeta^j+\zeta^{-j})& =1 \qquad
\mbox{if $m$ is odd},\\
\prod_{1\leq j\leq (m-2)/2} (2+\zeta^j+\zeta^{-j})& =\frac{m}{2}\qquad
\mbox{if $m$ is even}.
\end{align*}
Thus, $\det(B_j)$ divides $m$, as required. It follows that (c) holds.
\end{proof}

\begin{cor} \label{lem1a} Let $\Q_{(2)}$ be the ring of all rational numbers
of the form $2^ab$ where $a,b \in \Z$. Then $\tilde{\gamma}_{x,y,z}\in 
\Q_{(2)}$ for all $x,y,z\in W$. 
\end{cor}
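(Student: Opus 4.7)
A standard multiplicativity argument---Hecke algebras of direct products split as tensor products, irreducible representations split likewise, and both the invariants $f_\lambda$ and the leading matrix coefficients behave multiplicatively---reduces the corollary to the case that $(W,S)$ is irreducible. Indeed, if $\tilde{\gamma}_{x,y,z} \in \Q_{(2)}$ holds for each factor, then it holds for the product via the resulting product formula for $\tilde{\gamma}$.

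Whenever Lusztig's conjectures {\bf P1}--{\bf P15} are known for $(W,S)$, Proposition~\ref{klrem1} yields $\tilde{\gamma}_{x,y,z} = \gamma_{x,y,z} \in \Z \subseteq \Q_{(2)}$. By Remark~\ref{note2}, this covers the equal parameter case for every irreducible finite Coxeter group (in particular the types $H_3$, $H_4$, and $I_2(m)$ with $m$ odd, which are automatically equal parameter), as well as the asymptotic parameter regime for $B_n$, $F_4$ and $I_2(m)$ with $m$ even; by Example~\ref{leadp115}, it additionally covers type $F_4$ for any weight function and any monomial order on $\Gamma$. The main remaining case is therefore $(W,S)$ of type $B_n$ with arbitrary unequal parameters $a, b \in \Gamma$, $a > 0$; any analogous non-asymptotic case for $I_2(m)$, $m$ even, is handled in parallel using the explicit matrices of Example~\ref{leadingI2}, where already $\Omega_j \equiv I \bmod \fp$ in the relevant unequal regime.

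For type $B_n$ in the remaining case, Example~\ref{balBn}(a) provides balanced representations $\rho^\lambda$ with all leading matrix coefficients $c^{ij}_{w,\lambda} \in \Z$. Substituting into the defining formula collapses $\tilde{\gamma}_{x,y,z}$ to a $\Z$-linear combination of the $f_\lambda^{-1}$:
\[
\tilde{\gamma}_{x,y,z} = \sum_{\lambda \in \Lambda} m_\lambda\, f_\lambda^{-1}, \qquad m_\lambda \in \Z.
\]
It remains to show that each $f_\lambda$ is a power of $2$. This is the content of Lusztig's explicit computation \cite[Prop.~22.14]{Lusztig03} for type $B_n$; alternatively, it can be read off from Example~\ref{balBn}(b), where $\det(\Omega^\lambda) \equiv 2^{n_\lambda} \bmod \fp$, combined with the link between $\det(\Omega^\lambda) \bmod \fp$ and $f_\lambda^{d_\lambda}$ that is implicit in the proof of Proposition~\ref{balex} (concretely, $\Omega^\lambda \bmod \fp$ is a Gram matrix whose diagonalization yields the $b_i$ entering the orthogonality relations for the $c^{ij}_{w,\lambda}$, and the Schur relation $\sum_w c^{ij}_{w,\lambda} c^{ji}_{w^{-1},\lambda} = f_\lambda$ then forces $f_\lambda$ into $\Z[\tfrac{1}{2}]^{\times} \cap \Z_{>0}$).

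The hard part is thus precisely the powers-of-$2$ property for the $f_\lambda$ in type $B_n$: it is a nontrivial combinatorial fact about bipartitions, ultimately rooted in the Dipper--James--Murphy description of the Specht modules used in Example~\ref{balBn}, together with Lusztig's formula for $\bc_\lambda$. Once this ingredient is in place, the bound $f_\lambda^{-1} \in \Z[\tfrac{1}{2}] = \Q_{(2)}$ on the denominators follows at once, finishing the proof.
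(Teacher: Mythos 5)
Your proof mirrors the paper's structure (reduce to irreducible, invoke {\bf P1}--{\bf P15} where available via Proposition~\ref{klrem1}, finish with Example~\ref{balBn} for $B_n$), but there are two places where what you write is either wrong or not actually a proof.

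The first is your handling of $I_2(m)$, $m$ even. You suggest that the non-asymptotic case is ``handled in parallel'' to $B_n$ using the explicit matrices of Example~\ref{leadingI2}. That cannot work. For $m=5$ or $m\geq 7$ the leading matrix coefficients $c_{w,\lambda}^{ij}$ lie in $\Z_W=\Z[2\cos(2\pi/m)]$, not in $\Z$, and for $m$ even with unequal parameters the Schur elements of the two-dimensional representations are $f_{\rho_j}=m/(2-\zeta^{2j}-\zeta^{-2j})$; for $m=10$, say, this gives $f_{\rho_j}\in\{5+\sqrt{5},\,5-\sqrt{5}\}$, which are not even rational, let alone powers of $2$. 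So the $B_n$-style argument (integral leading coefficients plus $f_\lambda\in 2^{\Z}$) simply fails for these groups; rationality of $\tilde{\gamma}_{x,y,z}$ only emerges after a Galois-invariance cancellation across the $\rho_j$, and the clean way around all this is the one the paper takes: {\bf P1}--{\bf P15} are known to hold for $(W,S)$ of type $F_4$ and $I_2(m)$ for \emph{all} weight functions and monomial orders (not just the asymptotic regime), by \cite[\S 5]{klremarks}, which gives $\tilde{\gamma}_{x,y,z}=\gamma_{x,y,z}\in\Z$ directly. You seem to be unaware that this reference covers the full parameter range for $I_2(m)$.

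The second is your parenthetical ``alternative'' derivation of the power-of-$2$ property for $f_\lambda$ in type $B_n$ from $\det(\Omega^\lambda)\equiv 2^{n_\lambda}\bmod\fp$. You assert a ``link between $\det(\Omega^\lambda)\bmod\fp$ and $f_\lambda^{d_\lambda}$ that is implicit in the proof of Proposition~\ref{balex}''. No such identity is established there: the proof of Proposition~\ref{balex} gives, after diagonalizing, the relations $b_j\,\hat{c}^{ji}_{w^{-1},\lambda}=\hat{c}^{ij}_{w,\lambda}\,b_i$ and $\sum_w\bigl(\hat{c}^{ii}_{w,\lambda}\bigr)^2=f_\lambda$, but these do not yield $\prod_i b_i=f_\lambda^{d_\lambda}$ (the $b_i$ are basis-dependent and not individually pinned down by $f_\lambda$). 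Your main route---citing the known explicit determination of $f_\lambda$ for type $B_n$---is fine and is the substance that the paper's terse reference to Example~\ref{balBn} is leaning on, but you should not present the determinant argument as a genuine alternative without proving the claimed link.
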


\begin{proof} By standard reduction arguments, we can assume that $(W,S)$
is irreducible. Now, if {\bf P1}--{\bf P15} hold, then $\tilde{\gamma}_{x,y,
z}=\gamma_{x,y,z}\in\Z$ for all $x,y,z\in W$; see Proposition~\ref{klrem1}.
Hence, by Remark~\ref{note2}, the assertion holds in the equal parameter
case. By \cite[\S 5]{klremarks}, this also applies to $(W,S)$ of type
$F_4$ and $I_2(m)$ (for all choices of weight functions and monomial
orders). If $(W,S)$ if of type $B_n$, the result is covered by 
Example~\ref{balBn}.
\end{proof}

%%%%%%%%%%%%%%%%%%%%%%%%%%%%%%%%%%%%%%%%%%%%%%%%%%%%%%%%%%%%%%%%%%%%%%%%%%%
\section{Cellular bases} \label{seccellular}

We are now ready to review the construction of a cellular basis of $\bH$
and to extend this construction to further types of examples. We refer to 
\cite[Chap.~8]{Lusztig03} for the definition of the Kazhdan--Lusztig preorder
relation $\leq_{\cLR}$. (Note that this depends on the weight function $L$ 
and the monomial order on $\Gamma$.) For any $w \in W$, we have 
$\bH \bC_w \bH\subseteq \sum_{y} A\bC_y$ where the sum runs over all
$y \in W$ such that $y \leq_{\cLR} w$. Let $\sim_{\cLR}$ be the associated 
equivalence relation; the equivalence classes are called the two-sided cells 
of $W$. Instead of Lusztig's {\bf P1}--{\bf P15} (see 
\cite[14.2]{Lusztig03}), we shall only have to consider the following 
property which is a variant of {\bf P15}.

\medskip
\begin{center}
\fbox{$\;$ {\bf $\widetilde{\mbox{P15}}$.}
{\em If $x,x',y,w\in W$ satisfy $w \sim_{\cLR} y$, then $\;\displaystyle
\sum_{u \in W} \tilde{\gamma}_{w,x',u^{-1}}\, h_{x,u,y} =\sum_{u\in W} 
h_{x,w,u}\, \tilde{\gamma}_{u,x',y^{-1}}.\;$}}
\end{center}

\begin{rem} \label{tildeP15} Assume that {\bf P1}--{\bf P15} in
\cite[14.2]{Lusztig03} hold. Then $\tilde{\gamma}_{x,y,z}=\gamma_{x,y,z}$ 
for all $x,y,z\in W$; see Proposition~\ref{klrem1}. Now, if $x,x',y,w\in W$ 
satisfy $w \sim_{\cLR} y$, then $\ba(w)=\ba(y)$ by {\bf P4} and, hence, 
{\bf $\widetilde{\mbox{P15}}$} follows from \cite[18.9(b)]{Lusztig03}, 
which itself is deduced from {\bf P15}. Thus, 
{\bf $\widetilde{\mbox{P15}}$} holds if {\bf P1}--{\bf P15} hold.
\end{rem}

Assume from now on that $R$ is $L$-good; see Definition~\ref{Lgood}. By
Proposition~\ref{lem1}, all structure constants $\tilde{\gamma}_{x,y,z}$ 
lie in $R$. Let $\tilde{\bJ}_R$ be the $R$-span of $\{t_w \mid w \in W\}$.
Then $\tilde{\bJ}_R$ is an $R$-subalgebra of $\tilde{\bJ}$ and $\tilde{\bJ}
=F \otimes_R \tilde{\bJ}_R$. 
By the identification $\bC_w \leftrightarrow t_w$, the natural left 
$\bH$-module structure on $\bH$ (given by left multiplication) can be 
transported to a left $\bH$-module structure on $\tilde{\bJ}_A:=A
\otimes_{R} \tilde{\bJ}_R$. Explicitly, the action is given by 
\[ \bC_x.t_y=\sum_{z \in W} h_{x,y,z}\, t_z \qquad \mbox{for
all $x,y\in W$}.\]
Now we have the following result which was first proved 
by Lusztig \cite{Lu2} in the equal parameter case and in \cite[18.9 and
18.10]{Lusztig03} in general, assuming that {\bf P1}--{\bf P15} hold. Note 
that our proof is much less ``computational'' than that in 
[{\em loc.\ cit.}]; it is inspired by an analogous argument in \cite{Lu0}.

\begin{thm}[Lusztig] \label{thmJ} Assume that {\bf $\widetilde{\mbox{P15}}$} 
holds. Then there is a unique unital $A$-algebra homomorphism $\phi \colon \bH
\rightarrow \tilde{\bJ}_A$ such that, for any $h\in \bH$ and $w \in W$,
the difference $\phi(h)t_w-h.t_w$ is an $A$-linear combination of terms 
$t_y$ where $y \leq_{\cLR} w$ and $y \not\sim_{\cLR} w$. Explicitly, 
$\phi$ is given by 
\[\phi(\bC_w)=\sum_{\atop{z \in W, d \in \tilde{\cD}}{z \sim_{\cLR} d}}
h_{w,d,z} \, \tilde{n}_d\, t_d \qquad (w \in W).\]
\end{thm}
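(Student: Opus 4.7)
My plan is to define $\phi$ by the explicit formula, establish the characterizing asymptotic property, and then derive multiplicativity and uniqueness from a short lemma about $\tilde{\bJ}_A$. I would define $\phi$ to be the $A$-linear extension of
\[\phi(\bC_w):=\sum_{\substack{z\in W,\,d\in\tilde{\cD}\\ z\sim_{\cLR}d}}h_{w,d,z}\,\tilde{n}_d\,t_z\qquad(w\in W).\]
The unit check is immediate: $\bC_1=T_1$ and the identity $\bC_1\bC_d=\bC_d$ force $h_{1,d,z}=\delta_{d,z}$, so $\phi(\bC_1)=\sum_{d\in\tilde{\cD}}\tilde{n}_d\,t_d=1_{\tilde{\bJ}}$.

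The heart of the proof is verifying, for all $w,y\in W$,
\[\phi(\bC_w)\,t_y-\bC_w.t_y\;\in\;\sum_{u<_{\cLR}y}A\,t_u.\]
I would expand both sides and compare the coefficient of $t_u$. For $u\not\leq_{\cLR}y$ both sides vanish (using $h_{w,y,u}=0$); for $u<_{\cLR}y$ strictly the difference is absorbed in the right-hand side; for $u\sim_{\cLR}y$ the matching boils down to
\[\sum_{d\in\tilde{\cD}}\tilde{n}_d\,\Bigl(\sum_{z}h_{w,d,z}\,\tilde{\gamma}_{z,y,u^{-1}}\Bigr)=h_{w,y,u},\]
where by Remark~\ref{preordL}(b) the nonzero summands automatically satisfy $z\sim_{\cLR}y$ and $d\sim_{\cLR}y$. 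I would apply {\bf $\widetilde{\mbox{P15}}$} with the substitution $(w,y,x,x')\mapsto(d,u,w,y)$---whose hypothesis $d\sim_{\cLR}u$ is automatic---to rewrite the inner bracket as $\sum_{v}\tilde{\gamma}_{d,y,v^{-1}}\,h_{w,v,u}$. Then the cyclic symmetry $\tilde{\gamma}_{d,y,v^{-1}}=\tilde{\gamma}_{y,v^{-1},d}$ of Lemma~\ref{asym0}(a) rearranges the outer sum to $\sum_v h_{w,v,u}\sum_d\tilde{n}_d\,\tilde{\gamma}_{y,v^{-1},d}$, and Lemma~\ref{asym0}(b) collapses this to $\sum_v h_{w,v,u}\,\delta_{y,v}=h_{w,y,u}$. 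Extension from $h=\bC_w$ to arbitrary $h\in\bH$ is by $A$-linearity.

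The remaining steps rest on a uniqueness lemma: if $j\in\tilde{\bJ}_A$ satisfies $j\,t_y\in\sum_{u<_{\cLR}y}A\,t_u$ for every $y\in W$, then $j=0$. I would use the $L$-block decomposition $\tilde{\bJ}_A=\bigoplus_{\fF}\tilde{\bJ}_{A,\fF}$ from Remark~\ref{preordL}(c): since each $L$-block $\fF$ lies in a single two-sided cell, $\tilde{\bJ}_{A,\fF}\cap\sum_{u<_{\cLR}y}A\,t_u=0$ for $y\in\fF$. Writing $j=\sum_{\fF}j_\fF$ therefore forces $j_\fF\,t_y=0$ for every $y\in\fF$. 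Summing $\tilde{n}_d\,(j_\fF\,t_d)$ over $d\in\tilde{\cD}\cap\fF$ produces $j_\fF\cdot e_\fF=0$, where $e_\fF=\sum_{d\in\tilde{\cD}\cap\fF}\tilde{n}_d\,t_d$ is the identity of the summand $\tilde{\bJ}_{A,\fF}$; hence $j_\fF=0$.

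Uniqueness of $\phi$ follows by applying the lemma to $j=(\phi-\phi')(h)$ for any competing $\phi'$. For multiplicativity, the characterizing property combined with $h_2.t_y\in\sum_{u\leq_{\cLR}y}A\,t_u$ and the transitivity of $\leq_{\cLR}$ yields, via a short telescoping argument, that $(\phi(h_1)\phi(h_2)-\phi(h_1h_2))\,t_y\in\sum_{u<_{\cLR}y}A\,t_u$ for every $y$; the lemma then concludes $\phi(h_1h_2)=\phi(h_1)\phi(h_2)$. The main obstacle is the bookkeeping with the cyclic and anti-involutive symmetries of $\tilde{\gamma}$ (Lemma~\ref{asym0}(a) and Proposition~\ref{asym4}) needed to align {\bf $\widetilde{\mbox{P15}}$} with the coefficient comparison; the remaining steps are essentially formal.
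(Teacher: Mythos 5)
Your proof is correct, but it is organised quite differently from the paper's. The paper never expands $\phi(\bC_w)t_y$ coefficient by coefficient: it introduces the truncated action $\bC_x\diamond t_y=\sum_{z\sim_{\cLR}y}h_{x,y,z}t_z$, observes that {\bf $\widetilde{\mbox{P15}}$} is precisely the statement that this makes $\tilde{\bJ}_A$ an $(\bH,\tilde{\bJ}_A)$-bimodule, and then obtains $\phi$ as the composite $\bH\to\operatorname{End}_{\tilde{\bJ}_A}(\tilde{\bJ}_A)\cong\tilde{\bJ}_A$, $h\mapsto h\diamond 1_{\tilde{\bJ}_A}$ --- so multiplicativity, the explicit formula and uniqueness all come for free from general nonsense. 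You instead take the explicit formula as the definition and verify the characterizing congruence directly; the identity $\sum_d\tilde{n}_d\sum_z h_{w,d,z}\tilde{\gamma}_{z,y,u^{-1}}=h_{w,y,u}$ you reduce to, via {\bf $\widetilde{\mbox{P15}}$}, cyclicity and Lemma~\ref{asym0}(b), is exactly the bimodule identity paired with the right-identity axiom, so the technical core is the same, but your packaging replaces the endomorphism-ring isomorphism by a faithfulness lemma ($jt_y\in\sum_{u\leq_{\cLR}y,\,u\not\sim_{\cLR}y}At_u$ for all $y$ forces $j=0$, proved via the $L$-block decomposition and the block identities $e_{\fF}$), from which multiplicativity follows by telescoping. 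This buys a self-contained, hands-on argument at the cost of more bookkeeping; the paper's version buys conceptual brevity and makes the role of {\bf $\widetilde{\mbox{P15}}$} transparent. One small point of care: when you invoke the hypothesis $d\sim_{\cLR}u$ of {\bf $\widetilde{\mbox{P15}}$}, the deduction needs both the constraint $z\sim_{\cLR}d$ coming from your definition of $\phi(\bC_w)$ and the constraint $z\sim_{\cLR}u$ coming from $\tilde{\gamma}_{z,y,u^{-1}}\neq 0$ together with Remark~\ref{preordL}(a),(b); the displayed inner sum as you wrote it drops the restriction $z\sim_{\cLR}d$, and without it the claim would not be automatic. (Also note the formula in the theorem statement should end in $t_z$, as in your definition.)
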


\begin{proof} Using the preorder $\leq_{\cLR}$, we can 
define a left $\bH$-module structure on $\tilde{\bJ}_A$ by the formula
\[ \bC_x \diamond t_y=\sum_{z \in W\,:\,z \sim_{\cLR} y} 
h_{x,y,z}\, t_z \qquad \mbox{for all $x,y\in W$}.\]
(More formally, one considers a graded module $\mbox{gr}(E)$ with canonical 
basis $\{\bar{e}_w\mid w \in W\}$ as in \cite[p.~492]{Lu0}, and then 
transports the structure to $\tilde{\bJ}_A$ via the identification 
$\bar{e}_w \leftrightarrow t_w$. This immediately yields the above formula.
Of course, one can also check directly that the above formula defines
a left $\bH$-module structure on $\tilde{\bJ}$.) For any $h \in \bH$ and 
$w \in W$, the difference $h.t_w-h\diamond t_w$ is an $A$-linear combination 
of terms $t_y$ where $y \leq_{\cLR} w$ and $y \not\sim_{\cLR} w$.

On the other hand, we have a natural right $\tilde{\bJ}_A$-module structure
on $\tilde{\bJ}_A$ (given by right multiplication). Then 
{\bf $\widetilde{\mbox{P15}}$} is equivalent to the statement that 
$\tilde{\bJ}_A$ is an $(\bH, \tilde{\bJ}_A)$-bimodule. Indeed, just notice 
that {\bf $\widetilde{\mbox{P15}}$} is obtained by writing out the identity 
$(\bC_x \diamond t_w)t_{x'}=\bC_x \diamond t_wt_{x'}$, where we use that, 
on both sides of {\bf $\widetilde{\mbox{P15}}$}, the sum needs only be 
extended over all $u \in W$ such that $u\sim_{\cLR} w$. (This follows from 
the fact that each $L$-block is contained in a two-sided cell; see 
Remark~\ref{preordL}(b).)

Now we can argue as follows. The left $\bH$-module structure on 
$\tilde{\bJ}_A$ gives rise to an $A$-algebra homomorphism 
\[\psi\colon  \bH \rightarrow \mbox{End}_A(\tilde{\bJ}_A) \quad
\mbox{such that}\quad \psi(h)(t_w)=h \diamond t_w.\]
Since the left action of $\bH$ on $\tilde{\bJ}_A$ commutes with the
right action of $\tilde{\bJ}_A$, the image of $\psi$ lies in
$\mbox{End}_{\tilde{\bJ}_A}(\tilde{\bJ}_A)$. Now, we have a natural
$A$-algebra isomorphism
\[ \eta \colon \mbox{End}_{\tilde{\bJ}_A}(\tilde{\bJ}_A) \rightarrow
\tilde{\bJ}_A, \qquad f \mapsto f(1_{\tilde{\bJ}_A}).\]
(This works for any ring with identity.) We define $\phi=\eta \circ
\psi\colon \bH \rightarrow {\tilde{\bJ}}_A$. Then $\phi$ is an $A$-algebra 
homomorphism such that
\[ \phi(h)=\psi(h)(1_{{\tilde{\bJ}}_A})=h \diamond 1_{{\tilde{\bJ}}_A}
\qquad \mbox{for all $h \in \bH$}.\]
This yields $\phi(h)t_w=(h \diamond 1_{\tilde{\bJ}_A})t_w=
h \diamond 1_{\tilde{\bJ}_A}t_w=h \diamond t_w$ or, in other words, the 
difference $\phi(h)t_w-h.t_w$ is an $A$-linear combination of terms $t_y$ 
where $y \leq_{\cLR} w$ and $y \not\sim_{\cLR} w$, as required. 
Finally, we immediately obtain the formula
\[\phi(\bC_w)=\bC_w \diamond 1_{\tilde{\bJ}_A}=\sum_{d \in \tilde{\cD}} 
\tilde{n}_d \, \bC_w \diamond t_d=\sum_{\atop{z \in W, d \in \tilde{\cD}}{z 
\sim_{\cLR} d}} h_{w,d,z} \, \tilde{n}_d\, t_z.\]
Since $h_{1,d,z}=\delta_{d,z}$, this yields $\phi(\bC_1)=
1_{\tilde{\bJ}_A}$, hence $\phi$ is unital. 

The unicity of $\phi$ is clear since the conditions on $\phi$ imply
that $\phi(h)t_w=h\diamond t_w$ for all $w \in W$ and, hence, $\phi(h)
=\phi(h)1_{\tilde{\bJ}_A}=h \diamond 1_{\tilde{\bJ}_A}$ for all $h \in \bH$.
 \end{proof}

\begin{rem} \label{note1}  Assume that {\bf P1}--{\bf P15} hold. Then
$\tilde{\gamma}_{x,y,z}=\gamma_{x,y,z}$ for all $x,y,z\in W$; see 
Proposition~\ref{klrem1}. Hence, $\tilde{\bJ}$ is Lusztig's ring $\bJ$
constructed in \cite[Chap.~18]{Lusztig03}. Since the identity element is 
uniquely determined, we can also conclude that $\tilde{\cD}=\cD$ and 
$\tilde{n}_d=n_d$ for all $d \in \cD$, where $\cD$ and $n_d$ are defined as 
in [{\em loc.\ cit.}]. Hence, the above result is a combination of 
\cite[Theorems~18.9 and 18.10]{Lusztig03}. 

Note that the formula for $\phi$ in \cite[18.9]{Lusztig03} looks somewhat 
different: there is a factor $\hat{n}_z$ instead of $\tilde{n}_d=n_d$.
However, by \cite[Rem.~2.10]{klremarks}, one can easily see
that the two versions are equivalent. And in view of the above proof,
the version here seems more natural.
\end{rem}

Finally, we come to the construction of ``cell data'' for $\bH$ in the 
sense of Graham and Lehrer \cite{GrLe}. By \cite[Definition~1.1]{GrLe}, we 
must specify a quadruple $(\Lambda,M,C,*)$ satisfying the following 
conditions.
\begin{itemize}
\item[(C1)] $\Lambda$ is a partially ordered set (with partial order
denoted by $\trianglelefteq$), $\{M(\lambda) \mid \lambda \in \Lambda\}$ 
is a collection of finite sets  and
\[ C \colon \coprod_{\lambda \in \Lambda} M(\lambda)\times M(\lambda)
\rightarrow \bH \]
is an injective map whose image is an $A$-basis of $\bH$;
\item[(C2)] If $\lambda \in \Lambda$ and $\fs,\ft\in M(\lambda)$, write
$C(\fs,\ft)=C_{\fs,\ft}^\lambda \in \bH$. Then $* \colon \bH \rightarrow 
\bH$ is an $A$-linear anti-involution such that $(C_{\fs,\ft}^\lambda)^*=
C_{\ft,\fs}^\lambda$.
\item[(C3)] If $\lambda \in \Lambda$ and $\fs,\ft\in M(\lambda)$, then for
any element $h \in \bH$ we have
\[ hC_{\fs,\ft}^\lambda\equiv \sum_{\fs'\in M(\lambda)} r_h(\fs',\fs)\,
C_{\fs',\ft}^\lambda\quad \bmod \bH(\triangleleft\,\lambda),\]
where $r_h(\fs',\fs) \in A$ is independent of $\ft$ and where 
$\bH(\triangleleft\,\lambda)$ is the $A$-submodule of $\bH$ generated by 
$\{C_{\fs'',\ft''}^\mu \mid \mu \trianglelefteq \lambda; \lambda \neq 
\mu; \fs'',\ft''\in M(\mu)\}$.
\end{itemize}
We now define a required quadruple $(\Lambda,M,C,*)$ as follows.

As before, $\Lambda$ is an indexing set for the irreducible representations 
of $W$. For $\lambda\in \Lambda$, we set $M(\lambda)=\{1,\ldots,d_\lambda\}$.
We define a partial order on $\Lambda$ as follows. Recall that, in 
Remark~\ref{preordL}, we have associated with $\lambda \in \Lambda$ an
``$L$-block'' $\fF_\lambda$ of $W$. Now, given $\lambda, \mu \in \Lambda$, 
let $x \in \fF_\lambda$ and $y \in \fF_\mu$; then we define 
\begin{center}
\fbox{$\;\lambda \trianglelefteq\mu \qquad 
\stackrel{\text{def}}{\Leftrightarrow} \qquad \lambda=\mu \quad \mbox{ or } 
\quad x \leq_{\cLR} y,\;\; x\not\sim_{\cLR} y.\;$}
\end{center}
(This does not depend on the choice of $x$ or $y$, since each $L$-block
is contained in a two-sided cell of $W$; see Remark~\ref{preordL}(b).)

\begin{rem} \label{preordrem} Assume that {\bf P1}--{\bf P15} in
\cite[14.2]{Lusztig03} hold. By Proposition~\ref{klrem1}, we then have 
$\ba(z)=\ba_\lambda$ if $\bar{\rho}^\lambda(t_z)\neq 0$. Furthermore, by 
{\bf P4} and {\bf P11}, we have the implication ``$x\leq_{\cLR} y
\Rightarrow \ba(y)\leq \ba(x)$'', with equality only if $x \sim_{\cLR} y$. 
Hence, we see that
\[ \lambda \trianglelefteq\mu \qquad \Rightarrow \qquad \lambda=\mu \quad 
\mbox{or} \quad \ba_\mu<\ba_\lambda.\]
The partial order defined by the condition on the right hand side is the
one we used in \cite{mycell}.
\end{rem}
Finally, we define an $A$-linear anti-involution $* \colon \bH \rightarrow 
\bH$ by $T_w^*=T_{w^{-1}}$ for all $w \in W$. Thus, $T_w^*=T_w^{\,\flat}$ 
in the notation of \cite[3.4]{Lusztig03}. We can now state the following 
result:

\begin{thm}[Cf.\ \protect{\cite[Theorem~3.1]{mycell}}] \label{mainthm} 
Assume that {\bf $\widetilde{\mbox{P15}}$} holds. Recall that $R\subseteq\C$ 
is assumed to be an $L$-good subring; see Definition~\ref{Lgood}. Let 
$\bigl(\bar{\rho}_{\fs\ft}^\lambda(t_w)\bigr)$ and $\bigl(\beta_{\fs
\ft}^\lambda \bigr)$ be as in Proposition~\ref{lem1}. For any $\lambda 
\in \Lambda$ and $\fs,\ft\in M(\lambda)$, define
\[C_{\fs,\ft}^\lambda=\sum_{w \in W} \sum_{\fu \in M(\lambda)}
\beta^\lambda_{\ft\fu}\,\bar{\rho}_{\fu\fs}^\lambda\, (t_{w^{-1}})\, \bC_w.\]
Then $C_{\fs,\ft}^\lambda$ is a $\Z_W$-linear combination of Kazhdan--Lusztig
basis elements $\bC_w$ where $w\in \fF_\lambda$. The quadruple 
$(\Lambda,M,C,*)$ is a ``cell datum'' in the sense of Graham--Lehrer
\cite{GrLe}.
\end{thm}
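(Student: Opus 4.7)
The plan is to transfer the entire computation into the matrix-algebra picture provided by Proposition~\ref{asym3}, via the $A$-module identification $\iota\colon\bH\xrightarrow{\sim}\tilde{\bJ}_A$, $\bC_w\mapsto t_w$. Since $R$ is $L$-good, every $f_\lambda$ is a unit in $R$, so the isomorphism $\tilde{\bJ}_R\simeq\bigoplus_{\lambda}M_{d_\lambda}(R)$ is already defined over $R$. A direct application of Remark~\ref{bal1}$(*)$ shows that
\[ e_{ij}^\lambda:=f_\lambda^{-1}\sum_{w\in W}\bar{\rho}^\lambda_{ji}(t_{w^{-1}})\,t_w\in\tilde{\bJ}_R \]
corresponds to the matrix unit $E_{ij}$ in the $\lambda$-component. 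The first assertion of the theorem is then immediate: by Remark~\ref{preordL}(a) applied to $w^{-1}$, the scalar $\bar{\rho}^\lambda_{\fu\fs}(t_{w^{-1}})=c^{\fu\fs}_{w^{-1},\lambda}$ vanishes unless $w\in\fF_\lambda$, and by Proposition~\ref{lem1}(a) all coefficients in the definition of $C_{\fs,\ft}^\lambda$ lie in $\Z_W$.

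Substituting the matrix-unit formula into the definition of $C_{\fs,\ft}^\lambda$ gives
\[ \iota(C_{\fs,\ft}^\lambda)=f_\lambda\sum_{\fu\in M(\lambda)}\beta^\lambda_{\ft\fu}\,e_{\fs\fu}^\lambda, \]
which has nonzero component only at $\lambda$, equal to $f_\lambda E_{\fs\ft}B^\lambda$. Since $f_\lambda$ and $B^\lambda$ are both invertible over $R$ (Proposition~\ref{lem1}(c) and $L$-goodness), the family $\{f_\lambda E_{\fs\ft}B^\lambda\}_{\fs,\ft}$ is an $R$-basis of $M_{d_\lambda}(R)$; hence $\{C_{\fs,\ft}^\lambda\}$ is an $R$-basis of $\tilde{\bJ}_R$ and consequently an $A$-basis of $\bH$, proving (C1). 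For (C2) I would use the standard identity $\bC_w^*=\bC_{w^{-1}}$ (which reduces to the symmetry $p_{y,w}=p_{y^{-1},w^{-1}}$ of the Kazhdan--Lusztig polynomials together with $T_w^*=T_{w^{-1}}$): applying $*$ term-by-term to $C_{\fs,\ft}^\lambda$ and changing the summation variable $w\leftrightarrow w^{-1}$ reduces the desired identity $(C_{\fs,\ft}^\lambda)^*=C_{\ft,\fs}^\lambda$ to the matrix equality $B^\lambda\bar{\rho}^\lambda(t_w)=\bar{\rho}^\lambda(t_{w^{-1}})^{\operatorname{tr}}B^\lambda$, which follows from Proposition~\ref{lem1}(b) combined with the symmetry of $B^\lambda$.

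The heart of the proof is (C3), and the main obstacle is the careful bookkeeping of three distinct left actions on the common underlying $A$-module: left multiplication in $\bH$, the left $\bH$-action on $\tilde{\bJ}_A$ given by $\bC_x.t_y=\sum_z h_{x,y,z}t_z$, and left multiplication by $\phi(h)$ inside $\tilde{\bJ}_A$. By Theorem~\ref{thmJ} (where the hypothesis $\widetilde{\mbox{P15}}$ enters), the second and third agree on $t_y$ modulo $\operatorname{span}_A\{t_z:z\leq_{\cLR}y,\ z\not\sim_{\cLR}y\}$. Since $\iota$ intertwines the first two by the very definition of the $\bH$-action on $\tilde{\bJ}_A$, we obtain
\[ h\,C_{\fs,\ft}^\lambda\equiv\iota^{-1}\bigl(\phi(h)\cdot\iota(C_{\fs,\ft}^\lambda)\bigr) \]
modulo the $A$-span of $\bC_z$ with $z$ in a two-sided cell strictly below that of $\fF_\lambda$. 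Inside $\bigoplus_\nu M_{d_\nu}(A)$, only the $\lambda$-component of $\iota(C_{\fs,\ft}^\lambda)$ is nonzero, and left multiplication by $\phi(h)$ acts there as $\bar{\rho}^\lambda(\phi(h))\in M_{d_\lambda}(A)$, yielding
\[ \bar{\rho}^\lambda(\phi(h))\cdot f_\lambda E_{\fs\ft}B^\lambda=f_\lambda\sum_{\fs'\in M(\lambda)}\bar{\rho}^\lambda_{\fs'\fs}(\phi(h))\,E_{\fs'\ft}B^\lambda, \]
which under $\iota^{-1}$ corresponds to $\sum_{\fs'}\bar{\rho}^\lambda_{\fs'\fs}(\phi(h))\,C_{\fs',\ft}^\lambda$. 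Setting $r_h(\fs',\fs):=\bar{\rho}^\lambda_{\fs'\fs}(\phi(h))\in A$ gives the congruence demanded by (C3), manifestly independent of $\ft$. Finally, any $z$ in a two-sided cell strictly below $\fF_\lambda$ lies in an $L$-block $\fF_{\lambda'}$ with $\lambda'\triangleleft\lambda$ (by Remark~\ref{preordL}(b) and the definition of $\trianglelefteq$), so by (C1) already established $\bC_z$ belongs to $\bH(\triangleleft\lambda)$; this completes the verification.
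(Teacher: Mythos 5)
Your proof is correct and is essentially a self-contained expansion of the paper's argument: the paper defers to the proof of \cite[Theorem~3.1]{mycell} and only indicates the modifications (use Theorem~\ref{thmJ} in place of Lusztig's homomorphism, use Proposition~\ref{lem1} in place of \cite[Prop.~2.6]{mycell}, and use the stronger partial order), and your Wedderburn/matrix-unit computation together with the application of Theorem~\ref{thmJ} to verify (C3) is precisely that chain of arguments, spelled out explicitly.
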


\begin{proof} In all essential points, the argument is the same as
in the proof of \cite[Theorem~3.1]{mycell}. Indeed, since 
{\bf $\widetilde{\mbox{P15}}$} holds, we have the existence of Lusztig's 
homomorphism $\phi\colon \bH \rightarrow \tilde{\bJ}_A$ as in 
Theorem~\ref{thmJ}. The statements in Proposition~\ref{lem1} are completely 
analogous to those in \cite[Prop.~2.6]{mycell}. Finally, by 
Theorem~\ref{thmJ}, we have the property that $\phi(h)t_w-h.t_w$ is an 
$A$-linear combination of terms $t_y$ where $y \leq_{\cLR}$ and $y
\sim_{\cLR} w$. This is precisely what is needed in order to make 
Step~3 of the proof of \cite[Theorem~3.1]{mycell} work with our stronger 
definition of the partial order $\trianglelefteq$ on~$\Lambda$.
\end{proof}

The above result strengthens the main result of \cite{mycell} in four ways:
\begin{itemize}
\item it works for finite Coxeter groups in general, and not just for
Weyl groups;
\item it only requires {\bf $\widetilde{\mbox{P15}}$} to hold, and not all 
of {\bf P1}--{\bf P15} in \cite[14.2]{Lusztig03};
\item it uses a slightly stronger partial order on $\Lambda$ (see
Remark~\ref{preordrem});
\item it shows that the data required to define the cellular basis can
be extracted from the balanced representations $\rho^\lambda$.
\end{itemize}

\begin{cor} \label{cellfin} Let $(W,S)$ be any Coxeter system where
$W$ is finite. Let $R \subseteq \C$ be a subring which is $L_0$-good,
where $L_0$ is the ``univeral'' weight function in Example~\ref{Mrem12}.
Now let $L' \colon W \rightarrow \Gamma'$ be {\em any} weight function 
and $\bH'$ the corresponding Iwahori--Hecke algebra over $A'=R[\Gamma']$. 
Then $\bH'$ admits a cell datum in the sense of Graham--Lehrer \cite{GrLe}.
\end{cor}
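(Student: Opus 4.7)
My plan is as follows. Since $L_0$ is universal (Example \ref{Mrem12}), the weight function $L'$ factors as $L' = \sigma \circ L_0$ for a unique group homomorphism $\sigma \colon \Gamma_0 \to \Gamma'$. This induces an $R$-algebra homomorphism $\theta \colon A_0 \to A'$ and thereby an $A'$-algebra isomorphism $\bH' \cong \bH_0 \otimes_{A_0} A'$, where $\bH_0 := \bH_{A_0}(W,S,L_0)$. I will first produce a cell datum on $\bH_0$ for a suitable choice of monomial order on $\Gamma_0$, and then transport it to $\bH'$ by base change.

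For the construction of the cell datum on $\bH_0$, I will invoke Theorem \ref{mainthm}. Since $R$ is $L_0$-good by hypothesis, the only remaining ingredient is a monomial order on $\Gamma_0$ such that $L_0(s)>0$ for all $s\in S$ and such that $\widetilde{\mbox{P15}}$ holds. When $W$ is irreducible I can produce such an order from Remark \ref{note2}: for simply laced types (and $I_2(m)$ with $m$ odd) the equal parameter case applies and {\bf P1}--{\bf P15} hold, while for types $B_n$, $F_4$, and $I_2(m)$ with $m$ even, the pure lexicographic order with $b>ra>0$ makes {\bf P1}--{\bf P15} hold. In all these cases $\widetilde{\mbox{P15}}$ follows by Remark \ref{tildeP15}. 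For reducible $W = W_1 \times \cdots \times W_k$, I decompose $\Gamma_0 = \Gamma_{0,1} \oplus \cdots \oplus \Gamma_{0,k}$, choose a valid monomial order on each summand separately, and combine them lexicographically; the property $\widetilde{\mbox{P15}}$ then holds for $\bH_0$ componentwise.

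Given a cell datum $(\Lambda, M, C, *)$ on $\bH_0$ with each $C_{\fs,\ft}^\lambda$ a $\Z_W$-linear combination of Kazhdan--Lusztig basis elements $\bC_w$, I push it forward along $\theta$ to obtain a candidate cell datum on $\bH'$. The induced ring map $\bH_0 \to \bH'$ carries the $A_0$-basis $\{\bC_w\}$ to an $A'$-basis of $\bH'$ by the base change identification. Since the coefficients expressing $C_{\fs,\ft}^\lambda$ in terms of the $\bC_w$ live in $R$, and since the transition matrix has determinant essentially $\det(B^\lambda)$, which is invertible in $R$ by Proposition \ref{lem1}(c), the specialized elements $\theta(C_{\fs,\ft}^\lambda)$ also form an $A'$-basis of $\bH'$, giving (C1). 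Condition (C2) is automatic, since $T_w^*=T_{w^{-1}}$ in both algebras and the relation $(C_{\fs,\ft}^\lambda)^*=C_{\ft,\fs}^\lambda$ specializes verbatim. Condition (C3) is inherited by base change: the triangular relation for $h=T_w$ in $\bH_0$ maps to the corresponding relation in $\bH'$ modulo the image of $\bH_0(\triangleleft\,\lambda)$, which lies in $\bH'(\triangleleft\,\lambda)$; since $\bH'$ is $A'$-spanned by $\{T_w\mid w\in W\}$, the general case of (C3) follows by $A'$-linearity. The partial order on $\Lambda$, defined combinatorially via $L_0$-blocks and the preorder $\leq_{\cLR}$ attached to the chosen monomial order on $\Gamma_0$, is transported as-is.

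The main obstacle is selecting the monomial order on $\Gamma_0$ coherently for reducible $W$ while guaranteeing $\widetilde{\mbox{P15}}$, and then verifying rigorously that the specialization step preserves the cell datum axioms; this latter point is largely a bookkeeping argument that relies on the $L_0$-goodness of $R$ together with the invertibility of $\det(B^\lambda)$ in $R$ supplied by Proposition \ref{lem1}(c).
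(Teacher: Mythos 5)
Your proof is correct and follows essentially the same route as the paper: choose a monomial order on $\Gamma_0$ for which {\bf P1}--{\bf P15} (hence {\bf $\widetilde{\mbox{P15}}$}) hold, apply Theorem~\ref{mainthm} to get a cell datum on $\bH_0$, and transport it to $\bH'$ via the specialisation $A_0\rightarrow A'$ induced by $L'$. The paper simply outsources the two steps you spell out to citations (\cite[Cor.~5.4]{klremarks} for the choice of monomial order and \cite[Cor.~3.2]{mycell} for the stability of cell data under base change), so your unpacking of the basis, $*$-compatibility and (C3) arguments is a correct elaboration rather than a different proof.
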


\begin{proof} Let $\Gamma_{0}$, $A_0$ and $\bH_0$ be as in 
Example~\ref{Mrem12}. As pointed out in \cite[Cor.~5.4]{klremarks}, by 
combining all the known results about the validity of Lusztig's conjectures 
\cite[14.2]{Lusztig03}, we can choose a monomial order $\leq$ on $\Gamma_0$ 
such that {\bf P1}--{\bf P15} hold. Hence, by Remark~\ref{tildeP15} and
Theorem~\ref{mainthm}, the algebra $\bH_0$ admits a cell datum.
Now, there is a group homomorphism $\alpha\colon \Gamma_0 \rightarrow 
\Gamma'$ such that $\alpha((n_s)_{s \in S})=\sum_{s \in S} n_sL'(s)$.
This extends to a ring homomorphism $A_0 \rightarrow A'$ which we denote 
by the same symbol. Extending scalars from $A_0$ to $A'$ (via $\alpha$), we 
obtain $\bH'=A'\otimes_{A_0} \bH_0$. By \cite[Cor.~3.2]{mycell}, the images 
of the cellular basis elements of $\bH_0$ in $\bH'$ form a cellular basis 
in $\bH'$.
\end{proof}

In type $B_n$, an alternative construction of a cell datum is 
given by Dipper--James--Murphy \cite{DJM}.

%%%%%%%%%%%%%%%%%%%%%%%%%%%%%%%%%%%%%%%%%%%%%%%%%%%%%%%%%%%%%%%%%%%%%%%%%%%
 
\end{document}